\documentclass{article}

\usepackage{setspace}
\usepackage{amssymb,amsmath,amsthm,tabu,longtable}
\usepackage{subfigure}
\usepackage{calc}
\usepackage{verbatim}
\usepackage{enumerate}
\usepackage{cite}
\usepackage{epsfig}
\usepackage{graphicx}
\usepackage{graphics}
\usepackage {tikz}
\usetikzlibrary{automata,arrows,calc,positioning}
\usetikzlibrary{decorations.pathreplacing}
\usepackage{xcolor}\usepackage[hmargin=2cm,vmargin=2.5cm]{geometry}
\usepackage{chngcntr}
\counterwithin{figure}{section}
\newcommand{\tiltfrac}[2]{\left.#1\middle/#2\right.}

\newtheorem{definition}{Definition}[section]
\newtheorem{lemma}[definition]{Lemma}
\newtheorem{proposition}[definition]{Proposition}
\newtheorem{theorem}[definition]{Theorem}
\newtheorem{corollary}[definition]{Corollary}

\newtheorem{conjecture}[definition]{Conjecture}

%

%
                                {\null\hfill$\Box$\par\medskip\medskip\medskip\medskip}

\newcommand*{\QED}{\null\hfill$\Box$\par\medskip}%
\newcommand{\av}{\text{\rm{av}}}
\newcommand{\avd}{\text{\rm{avd}}}
\newcommand{\navd}{\widehat{\text{\rm{avd}}}}
\newcommand{\Priv}[2]{\text{Priv}_{#1}(#2)}

\newcounter{eqcount}

\makeatletter
\renewcommand{\pod}[1]{\mathchoice
  {\allowbreak \if@display \mkern 18mu\else \mkern 8mu\fi (#1)}
  {\allowbreak \if@display \mkern 18mu\else \mkern 8mu\fi (#1)}
  {\mkern4mu(#1)}
  {\mkern4mu(#1)}
}

\title{The Average Order of Dominating Sets of a Graph}

\author{Iain Beaton\\
\small Department of Mathematics \& Statistics\\[-0.8ex]
\small Dalhousie University\\[-0.8ex] 
\small Halifax, CA\\
\small\tt ibeaton@dal.ca\\
\and
Jason I. Brown\thanks{Supported by NSERC grant RGPIN 2018-05227}\\
\small Department of Mathematics \& Statistics\\[-0.8ex]
\small Dalhousie University\\[-0.8ex] 
\small Halifax, CA\\
\small\tt Jason.Brown@dal.ca\\
}


\begin{document}

\tikzset{bignode/.style={minimum size=3em,}}
\maketitle

\begin{abstract}

This papers focuses on the average order of dominating sets of a graph. We find the extremal graphs for the maximum and minimum value over all graphs on  $n$ vertices, while for trees we prove that the star minimizes the average order of dominating sets. We prove the average order of dominating sets in graphs without isolated vertices is at most $3n/4$, but provide evidence that the actual upper bound is $2n/3$. Finally, we show that the normalized average, while dense in $[1/2,1]$,  tends to $\frac{1}{2}$ for almost all graphs.

\end{abstract}

\setstretch{1.4}

\section{Introduction}\label{sec:intro}

For a graph $G$ containing vertex $v$, $N_G(v) = \{u|uv \in E(G)\}$ denotes the \emph{open neighbourhood} of $v$ while $N_G[v] = N(v) \bigcup \{v\}$ denotes the \emph{closed neighbourhood} of $v$ (we will omit the subscript $G$ when only referring to one graph). For $S \subseteq V(G)$, the closed neighbourhood $N[S]$ of $S$ is simply the union of the closed neighbourhoods for each vertex in $S$. A subset of vertices $S$ is a \emph{dominating set} of $G$ if $N[S] = V(G)$, that is, every vertex is either in $S$ or adjacent to a vertex in $S$. The \emph{domination number} of $G$, denoted $\gamma(G)$, is the order of the smallest dominating set of $G$. The study of dominating sets in graphs is extensive (see, for example, \cite{hedetniemi}).

Let $\mathcal{D}(G)$ denote the collection of dominating sets of $G$. Furthermore let $d_k(G) = |\{S \in \mathcal{D}(G): |S|=k\}|$. Then the \emph{average order of dominating sets} in $G$, denoted $\avd(G)$, is
$$\avd(G)= \frac{\sum\limits_{k=\gamma(G)}^{|V(G)|} kd_k(G)}{\sum\limits_{k=\gamma(G)}^{|V(G)|} d_k(G)},$$
that is, the average cardinality of a dominating set of $G$.

For graphs with few dominating sets $\avd(G)$ is relatively easy to compute using the above formula. For example the empty graph $\overline{K_n}$ has exactly one dominating set of order $n$, hence $\avd(\overline{K_n})=n$. However if $G$ has many dominating sets other techniques may be more appropriate to compute $\avd(G)$. The \emph{domination polynomial} of $G$ is defined by
$$D(G,x) = \sum_{k=\gamma (G)}^{|V(G)|} d_k(G)x^k,$$
(see \cite{2012AlikhaniPHD}, for example, for for a discussion of domination polynomials). The average order of dominating sets in $G$ can be regarded as the logarithmic derivative of $D(G,x)$ evaluated at 1, that is,
\[ \avd(G)=\frac{d}{dx} \ln (D(G,x)) \bigg|_{x=1} =  \frac{D'(G,1)}{D(G,1)}. \refstepcounter{eqcount} \label{eqn:avdG} \tag{\theeqcount} \]
This allows us to compute $\avd(G)$ quickly when $D(G,x)$ readily available. For example,
$$D(K_n,x)=(1+x)^n-1 \hspace{2mm} \text{ and } \hspace{2mm}D(K_{1,n-1},x)=x(1+x)^{n-1}+x^{n-1},$$
so
$$\avd(K_n)=\frac{n2^{n-1}}{2^n-1} \hspace{2mm} \text{ and } \hspace{2mm} \avd(K_{1,n-1})=\frac{(n+1)2^{n-2}+n-1}{2^{n-1}+1}.$$

It is trivial to observe that the domination polynomial is multiplicative over components, that is, for graphs $G$ and $H$, $D(G \cup H,x) = D(G,x)D(H,x)$ where $G \cup H$ denotes the disjoint union of $G$ and $H$. From this we can obtain a fundamental result which states that the average order of dominating sets is additive over components.

\begin{lemma}
\label{lem:disjointunion}
Let $G$ and $H$ be graphs. Then $\avd(G \cup H) = \avd(G) + \avd(H)$.
\end{lemma}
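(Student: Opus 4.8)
The plan is to leverage the multiplicativity of the domination polynomial over disjoint unions, $D(G \cup H, x) = D(G,x)D(H,x)$, which was noted immediately before the statement, together with the characterization of $\avd$ as a logarithmic derivative from~(\ref{eqn:avdG}). Since the logarithm sends products to sums, we have $\ln D(G\cup H, x) = \ln D(G,x) + \ln D(H,x)$ on any interval where both polynomials are positive; differentiating and evaluating at $x=1$ then gives $\avd(G\cup H) = \avd(G) + \avd(H)$ directly.

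To sidestep any concern about the domain of the logarithm, I would instead carry out the computation purely algebraically via the quotient rule. Writing $f = D(G,x)$ and $g = D(H,x)$, one has
\[ \avd(G \cup H) = \frac{(fg)'}{fg}\bigg|_{x=1} = \frac{f'g + fg'}{fg}\bigg|_{x=1} = \frac{f'(1)}{f(1)} + \frac{g'(1)}{g(1)} = \avd(G) + \avd(H), \]
where the last equality is again~(\ref{eqn:avdG}).

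The only point requiring a word of justification is that the denominators are nonzero: $D(G,1) = \sum_{k} d_k(G)$ counts the dominating sets of $G$, and this is at least $1$ since $V(G)$ is itself always a dominating set (and likewise for $H$). Hence $D(G,1), D(H,1) \geq 1 > 0$ and every quotient above is well-defined. There is no genuine obstacle here — once multiplicativity of $D(G,x)$ is in hand the result is essentially immediate — so the work consists only in recording these observations cleanly.
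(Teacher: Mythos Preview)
Your argument is correct and is essentially identical to the paper's own proof: both use the multiplicativity $D(G\cup H,x)=D(G,x)D(H,x)$, differentiate via the product rule, and divide through at $x=1$ to split the quotient as $\avd(G)+\avd(H)$. Your added remark that $D(G,1)\geq 1$ justifies the division is a nice touch the paper leaves implicit.
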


\begin{proof}
As $D(G \cup H,x) = D(G,x)D(H,x)$, it follows that $D'(G \cup H,x) = D'(G,x)D(H,x)+D(G,x)D'(H,x)$. Therefore 

$$\avd(G \cup H) = \frac{D'(G,1)D(H,1)+D(G,1)D'(H,1)}{D(G,1)D(H,1)} = \frac{D'(G,1)}{D(G,1)}+\frac{D'(H,1)}{D(H,1)}  =\avd(G) + \avd(H).$$
\end{proof}

Although the average order of dominating sets is a novel area of research, there has been work done on averages of serveral other graphs invariants: 
\begin{itemize}
\item Closely related to the Weiner Index of a graph \cite{1947Wiener}, the mean distance (between vertices) in a graph was introduced in 1977 by Doyle and Graver \cite{1977Doyle}. Doyle and Graver showed for connected graphs of order $n$ (that is, with $n$ vertices) the mean distance in a graph was maximized by a path, with mean distance $\tiltfrac{(n+1)}{3}$, and minimized by the complete graph, with mean distance 1. 
\item The mean subtree order of a graph was introduced in 1983 by Jamison \cite{1983Jamison}. Jamison showed for any tree $T$ on $n$ vertices, the average number of vertices in a subtree of $T$ is at least $\tiltfrac{(n+2)}{3}$, with that minimum achieved if and only if $T$ is a path. As the mean subtree order of $T$ is at most $n$, Jamison naturally defined the mean subtree order of $T$ divided by $n$ to be the density of $T$ and showed there were trees whose density approached 1 as $n \rightarrow \infty$. Jamison conjectured the tree with maximum density was some caterpillar graph.
Additionally, the mean subtree order has been subject to a fair amount of recent work \cite{1984Jamison, 2015Wagner, 2010Vince, 2014Haslegrave}. 
\item The average size of an independent set in a graph was introduced in 2019 by Andriantiana et. el. \cite{2019AverageIndepedence}. Andriantiana et. el. showed the average number of vertices of an independent set in a graph was maximized by the empty graph and minimized by the complete graph. They also showed the average number of vertices of an independent set in a tree was maximized by $P_n$ and minimized by $K_{1,n-1}$. 
\item In 2020, Andriantiana et. el. \cite{2020Matchings} introduced the average size of a matching in a graph was introduced. Andriantiana et. el. showed the average number of edges in a matching of a graph was now minimized by the empty graph and maximized by the complete graph. They also showed the average number of edges in a matching in a tree was maximized by $P_n$ and minimized by $K_{1,n-1}$.
\end{itemize}

We remark that when the domination polynomial has all real roots, the average order of domination sets of graph $G$ can also determine the mode of the coefficients of $D(G,x)$; Darroch \cite{1964Darroch} showed in general that a positive sequence $(a_0,a_1,\cdots ,a_n)$ has its mode at either $\left\lfloor \frac{f'(1)}{f(1)} \right\rfloor$ or $\left\lceil \frac{f'(1)}{f(1)} \right\rceil$, where $f(x) = a_0 + a_1x + \cdots + a_nx^n$ is the associated generating polynomial. Therefore by $(\ref{eqn:avdG})$, if $D(G,x)$ has all real roots then it mode is at $\left\lfloor \avd(G) \right\rfloor$ or $\left\lceil \avd(G) \right\rceil$. 

This paper is structured as follows. In Section~\ref{sec:avdnorm} we determine the extremal graphs for the average order of dominating sets of graphs of order $n$. In Section~\ref{sec:bounds} we develop bounds for the average order of domination sets for connected graphs, as well as for trees.  Section~\ref{sec:denserandom} introduces a normalized version of the parameter, describes the distribution of these parameters, and considers the values for Erd\"{o}s-Renyi random graphs. Finally we conclude with some remarks.

\section{Extremal Graphs}\label{sec:avdnorm}

For a graph on $n$ vertices, it is clear that $\avd(G) \leq n$ as every dominating set has cardinality at most $n$. This bound is achieved by $\overline{K_n}$, and this graph is the unique extremal graph, as any other graph of order $n$ has a dominating set of size smaller than $n$. On the other hand, what about the \emph{minimum} value of $\avd(G)$ over all graphs of order $n$? As you might expect, the complete graph $K_n$ is the unique extremal graph in this case, but the argument will be more subtle, and that is what we shall pursue now. 

We shall first need some technical results about the average cardinality of sets in collection of sets.
Let $X$ be a nonempty finite set and $\mathcal{P}(X)$ its powerset. For any nonempty subset $\mathcal{A} \subseteq \mathcal{P}(X)$ we define the average order of $\mathcal{A}$, denoted $\av(\mathcal{A})$ to be 
$$\av(\mathcal{A}) = \frac{1}{|\mathcal{A}|}\sum_{A \in \mathcal{A}}|A|.$$

\begin{lemma}
\label{lem:AvgLower}
For a nonempty finite set $X$, let $\mathcal{A}  \subset \mathcal{B} \subseteq \mathcal{P}(X)$. Then 
$$\av(\mathcal{B}) \leq \av(\mathcal{A})  \mbox{~~if and only if~~}  \av(\mathcal{B}-\mathcal{A}) \leq \av(\mathcal{A}).$$
\end{lemma}

\begin{proof}
Set $S(\mathcal{C}) = \sum_{C \in \mathcal{C}}|C|$ for any $\mathcal{C} \subseteq \mathcal{P}(X)$. Then

\begin{align*}
& &  \av(\mathcal{B}) \leq & \av(\mathcal{A})\\
\Leftrightarrow& & \frac{S(\mathcal{B})}{|\mathcal{B}|} \leq & \frac{S(\mathcal{A})}{|\mathcal{A}|}  \hspace{2mm}  \\
\Leftrightarrow& & \frac{S(\mathcal{A}) + S(\mathcal{B} - \mathcal{A})}{|\mathcal{A}| + |\mathcal{B} - \mathcal{A}|} \leq & \frac{S(\mathcal{A})}{|\mathcal{A}|}  \hspace{2mm} \\
\Leftrightarrow& & S(\mathcal{A})|\mathcal{A}| + S(\mathcal{B} - \mathcal{A})|\mathcal{A}|   \leq & S(\mathcal{A})|\mathcal{A}| + S(\mathcal{A})|\mathcal{B} - \mathcal{A}|  \hspace{2mm}\\
\Leftrightarrow& & S(\mathcal{B} - \mathcal{A})|\mathcal{A}|   \leq & S(\mathcal{A})|\mathcal{B} - \mathcal{A}|  \hspace{2mm} \\
\Leftrightarrow& & \frac{S(\mathcal{B} - \mathcal{A})}{|\mathcal{B} - \mathcal{A}| }  \leq & \frac{S(\mathcal{A})}{|\mathcal{A}| } \hspace{2mm}  \\
\Leftrightarrow& & \av(\mathcal{B}-\mathcal{A}) \leq & \av(\mathcal{A}) \hspace{2mm}\\
\end{align*}
\end{proof}

\begin{lemma}
\label{lem:AvgPart}
For a nonempty finite set $X$, let $\mathcal{A} \subseteq \mathcal{P}(X)$. If there exists $r_1, r_2 \in \mathbb{R}$ and partition $\mathcal{A}_1, \mathcal{A}_2, \ldots, \mathcal{A}_k$ of $\mathcal{A}$ such that $r_1 \leq \av(\mathcal{A}_i)\leq r_2$, then $r_1 \leq \av(\mathcal{A})\leq r_2$.
\end{lemma}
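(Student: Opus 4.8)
The plan is to prove the statement by induction on $k$, the number of parts in the partition, using Lemma~\ref{lem:AvgLower} as the engine for the inductive step. The base case $k=1$ is immediate, since then $\mathcal{A}_1 = \mathcal{A}$ and the hypothesis $r_1 \leq \av(\mathcal{A}_1) \leq r_2$ is literally the conclusion. For the inductive step, suppose the result holds for partitions into $k-1$ parts, and let $\mathcal{A}_1, \ldots, \mathcal{A}_k$ be a partition of $\mathcal{A}$ with $r_1 \leq \av(\mathcal{A}_i) \leq r_2$ for each $i$.

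First I would set $\mathcal{B} = \mathcal{A}_1 \cup \mathcal{A}_2 \cup \cdots \cup \mathcal{A}_{k-1}$, so that $\mathcal{A}_1, \ldots, \mathcal{A}_{k-1}$ is a partition of $\mathcal{B}$ into $k-1$ parts each of whose average lies in $[r_1, r_2]$; by the induction hypothesis, $r_1 \leq \av(\mathcal{B}) \leq r_2$. Now $\mathcal{A} = \mathcal{B} \cup \mathcal{A}_k$ with $\mathcal{B}$ and $\mathcal{A}_k$ disjoint, so $\mathcal{A} - \mathcal{B} = \mathcal{A}_k$ (assuming $\mathcal{A}_k \neq \emptyset$, which we may, since empty parts can be discarded from the partition without affecting anything, and if every part is empty there is nothing to prove). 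Then $\av(\mathcal{A})$ is a weighted average of $\av(\mathcal{B})$ and $\av(\mathcal{A}_k)$ with positive weights $|\mathcal{B}|$ and $|\mathcal{A}_k|$, hence lies between them; since both $\av(\mathcal{B})$ and $\av(\mathcal{A}_k)$ lie in $[r_1, r_2]$, so does $\av(\mathcal{A})$.

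The weighted-average observation can be made rigorous either by a direct computation — writing $\av(\mathcal{A}) = \frac{S(\mathcal{B}) + S(\mathcal{A}_k)}{|\mathcal{B}| + |\mathcal{A}_k|}$ in the notation of the proof of Lemma~\ref{lem:AvgLower} and noting it is a convex combination of $S(\mathcal{B})/|\mathcal{B}|$ and $S(\mathcal{A}_k)/|\mathcal{A}_k|$ — or by two applications of Lemma~\ref{lem:AvgLower}: applying it with the roles of the two sets giving the upper bound, and once more (after swapping which set plays the role of $\mathcal{A}$) for the lower bound. Concretely, for the upper bound, if $\av(\mathcal{A}_k) \leq \av(\mathcal{B})$ then $\mathcal{A}_k = \mathcal{B}' - \mathcal{A}'$ in the lemma's notation with $\mathcal{A}' = \mathcal{B}$, $\mathcal{B}' = \mathcal{A}$, so $\av(\mathcal{A}) \leq \av(\mathcal{B}) \leq r_2$; the symmetric choice handles the case $\av(\mathcal{B}) \leq \av(\mathcal{A}_k)$ and, analogously, the lower bound.

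I do not expect a genuine obstacle here; the only mild subtlety is bookkeeping — making sure empty parts of the partition are handled (they contribute nothing and can be removed, and if all parts are empty then $\mathcal{A} = \emptyset$ contradicts the standing assumption that the collections under consideration are nonempty, so this case does not arise) and that the denominators appearing in the weighted average are positive, which is exactly why nonemptiness of the parts matters. Everything else is the elementary fact that a convex combination of numbers in an interval stays in that interval, packaged through Lemma~\ref{lem:AvgLower} and an induction.
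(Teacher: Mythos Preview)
Your argument is correct, but it is more elaborate than necessary. The paper proves the lemma in one line by writing
\[
\av(\mathcal{A}) = \frac{\sum_{i=1}^{k} S(\mathcal{A}_i)}{|\mathcal{A}|} = \frac{\sum_{i=1}^{k} |\mathcal{A}_i|\,\av(\mathcal{A}_i)}{|\mathcal{A}|}
\]
and then bounding each $\av(\mathcal{A}_i)$ above by $r_2$ and below by $r_1$; since the weights $|\mathcal{A}_i|/|\mathcal{A}|$ sum to $1$, the conclusion is immediate. This is exactly the ``direct computation'' you mention as one way to handle your inductive step for two parts --- the paper simply applies it to all $k$ parts at once, bypassing both the induction and any appeal to Lemma~\ref{lem:AvgLower}. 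Your route via induction and Lemma~\ref{lem:AvgLower} is perfectly valid and your bookkeeping about empty parts is careful, but it packages the same convex-combination idea less efficiently: you are proving the $k$-term weighted-average bound by repeatedly invoking the $2$-term case, when the $k$-term case is no harder to write down directly.
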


\begin{proof}
Now

$$\av(\mathcal{A})=\frac{S(\mathcal{A})}{|\mathcal{A}|}=\frac{\sum_{i=1}^{k}S(\mathcal{A}_i)}{|\mathcal{A}|} = \frac{\sum_{i=1}^{k}|\mathcal{A}_i|\av(\mathcal{A}_i)}{|\mathcal{A}|} \geq  \frac{\sum_{i=1}^{k}|\mathcal{A}_i|r_1}{|\mathcal{A}|} = \frac{r_1\sum_{i=1}^{k}|\mathcal{A}_i|}{|\mathcal{A}|} =r_1$$
and 
$$\av(\mathcal{A})=\frac{S(\mathcal{A})}{|\mathcal{A}|}=\frac{\sum_{i=1}^{k}S(\mathcal{A}_i)}{|\mathcal{A}|} = \frac{\sum_{i=1}^{k}|\mathcal{A}_i|\av(\mathcal{A}_i)}{|\mathcal{A}|} \leq  \frac{\sum_{i=1}^{k}|\mathcal{A}_i|r_2}{|\mathcal{A}|} = \frac{r_2\sum_{i=1}^{k}|\mathcal{A}_i|}{|\mathcal{A}|} =r_2.$$

\end{proof}

 A \emph{simplicial complex} $\mathcal{A}$ is a subset of $\mathcal{P}(X)$ such that $\emptyset \in \mathcal{A}$ and $A \in \mathcal{A}$ implies $\mathcal{P}(A) \subseteq \mathcal{A}$. Simplicial complexes have numerous applications in combinatorics (and algebraic topology); here we will need a result on the average size of a set in a complex.

\begin{proposition}
\label{thm:simpialhalf}
Let $\mathcal{A}$ be a simplicial complex on a nonempty finite set $X$ with $n$ elements. Then for all $k \leq \frac{n}{2}$

$$|\mathcal{A}_{k}| \geq |\mathcal{A}_{n-k}|,$$

\noindent where $\mathcal{A}_{k}=\{A \in \mathcal{A} : |A|=k\}$. Hence $\av(\mathcal{A}) \leq \frac{n}{2}$.
\end{proposition}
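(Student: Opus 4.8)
The plan is to prove the inequality $|\mathcal{A}_k| \geq |\mathcal{A}_{n-k}|$ for all $k \leq n/2$ by exhibiting, for each such $k$, an injection from $\mathcal{A}_{n-k}$ into $\mathcal{A}_k$. The natural candidate is complementation: for $A \in \mathcal{A}_{n-k}$, send $A$ to $X \setminus A$, which has size $k$. This is clearly injective, so the only thing to verify is that it lands inside $\mathcal{A}$. But that is not automatic — $\mathcal{A}$ is a down-set, not an up-set — so a crude complementation map will not suffice, and this is where the real work lies.

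The standard tool here is the normalized matching property of the Boolean lattice (a consequence of the fact that the Boolean lattice is a normalized matching poset, or equivalently an application of the Kruskal–Katona theorem, or simply Hall's theorem applied level by level). Concretely, for any two consecutive levels, the bipartite ``containment'' graph between $k$-sets and $(k+1)$-sets has a matching saturating the smaller side. Iterating, whenever $k \le m$ there is an injection $\phi_{k,m}\colon \binom{X}{k} \to \binom{X}{m}$ with $A \subseteq \phi_{k,m}(A)$ for every $k$-set $A$ (this is just the statement that the Boolean lattice has a ``symmetric chain''-like flow upward from any lower level to any higher one; it follows from Dilworth/Hall-type arguments or directly from the symmetric chain decomposition of $2^X$). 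Applying this with the roles of small and large sets swapped, for $k \le n-k$ there is an injection $\psi\colon \binom{X}{n-k} \to \binom{X}{k}$ with $\psi(B) \subseteq B$ for every $(n-k)$-set $B$.

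Now I restrict $\psi$ to $\mathcal{A}_{n-k}$. For $B \in \mathcal{A}_{n-k}$ we have $\psi(B) \subseteq B \in \mathcal{A}$, and since $\mathcal{A}$ is a simplicial complex, $\mathcal{P}(B) \subseteq \mathcal{A}$, so $\psi(B) \in \mathcal{A}$; as $|\psi(B)| = k$ this gives $\psi(B) \in \mathcal{A}_k$. Thus $\psi$ restricts to an injection $\mathcal{A}_{n-k} \to \mathcal{A}_k$, proving $|\mathcal{A}_k| \geq |\mathcal{A}_{n-k}|$ for all $k \le n/2$.

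For the final claim $\av(\mathcal{A}) \le n/2$, I would pair up the levels: partition (or rather group) the contribution to $\av$ coming from $\mathcal{A}_k$ and $\mathcal{A}_{n-k}$ for $k < n/2$, with the middle level $\mathcal{A}_{n/2}$ (when $n$ is even) left on its own. Since $|\mathcal{A}_k| \ge |\mathcal{A}_{n-k}|$ and $k < n-k$, the two-term average $\frac{k|\mathcal{A}_k| + (n-k)|\mathcal{A}_{n-k}|}{|\mathcal{A}_k| + |\mathcal{A}_{n-k}|}$ is a weighted average of $k$ and $n-k$ that puts at least half its weight on $k$, hence is at most $n/2$; the middle level contributes exactly $n/2$. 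Applying Lemma~\ref{lem:AvgPart} with $r_2 = n/2$ to this partition of $\mathcal{A}$ (into the pairs $\mathcal{A}_k \cup \mathcal{A}_{n-k}$ and the singleton middle class, discarding empty classes) yields $\av(\mathcal{A}) \le n/2$. The main obstacle is genuinely the injection into the complex, i.e.\ invoking the normalized matching / symmetric chain property of $2^X$ correctly so that the images land below sets of $\mathcal{A}$ and hence inside $\mathcal{A}$; everything after that is bookkeeping.
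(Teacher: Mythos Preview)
Your proof is correct and follows essentially the same strategy as the paper: obtain a containment-respecting injection from the $(n-k)$-level into the $k$-level, then pair the levels $\mathcal{A}_k \cup \mathcal{A}_{n-k}$ and invoke Lemma~\ref{lem:AvgPart}. The only difference is that the paper carries out the Hall argument directly on the bipartite containment graph between $\mathcal{A}_{n-k}$ and $\mathcal{A}_k$ (each $A\in\mathcal{A}_{n-k}$ has exactly $\binom{n-k}{k}$ neighbours, each $B\in\mathcal{A}_k$ at most $\binom{n-k}{k}$), rather than appealing to the normalized matching / symmetric chain structure of the full Boolean lattice and then restricting; the paper's version is thus slightly more self-contained, while yours imports a standard structural fact as a black box.
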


\begin{proof}
We will use Hall's Theorem (see, for example, \cite{brualdi}) to show $|\mathcal{A}_{k}| \geq |\mathcal{A}_{n-k}|$. Consider the bipartite graph with bipartition $(\mathcal{A}_{n-k}$, $\mathcal{A}_{k})$ where $A\in \mathcal{A}_{n-k}$ and $B\in \mathcal{A}_{k}$ are adjacent if and only if  $B \subseteq A$. As $\mathcal{A}$ is a simplicial complex, the degree of each $A\in \mathcal{A}_{n-k}$ is $ {n-k \choose k}$ and the degree each $B\in \mathcal{A}_{k}$ is at most $ {n-k \choose n-2k}={n - k \choose k}$. Furthermore, for any subset $S \subseteq \mathcal{A}_{n-k}$ there are exactly $|S| {n-k \choose k}$ edges incident with the vertices of $S$ and at most $|N(S)|{n - k \choose k}$ edges incident to the vertices of $N(S)$. Therefore $|S| \leq |N(S)|$ and by Hall's Theorem you can match $\mathcal{A}_{n-k}$ into $\mathcal{A}_{k}$, so $|\mathcal{A}_{k}| \geq |\mathcal{A}_{n-k}|$.

Now let $\mathcal{B}_k  = \mathcal{A}_{n-k} \cup \mathcal{A}_{k}$. Note that $\av(\mathcal{B}_k) \leq \frac{n}{2}$ and $\mathcal{B}_1, \mathcal{B}_2, \ldots, \mathcal{B}_{\frac{n}{2}}$ is a partition of $\mathcal{A}$. It follows from Lemma \ref{lem:AvgPart} that $\av(\mathcal{A}) \leq \frac{n}{2}$.
\end{proof}

On our path to proving that $K_n$ is the unique graph, among all graphs of order $n$, with the least average order of dominating sets, we shall need the following that states that every subset of vertices that does not omit the closed neighbourhood of \emph{some} vertex must be dominating.
 
\begin{lemma}
\label{lem:MinDeg}
\textnormal{\cite{2010Char}} Let $G$ be a graph of order $n$ then  $d_{n-k}(G) = {n \choose k}$ for all $k \leq \delta(G)$, where $\delta(G)$ is the minimum degree of $G$.
\QED
\end{lemma}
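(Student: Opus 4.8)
The plan is to prove the stronger statement that lies behind the count: whenever $k \le \delta(G)$, \emph{every} $(n-k)$-element subset of $V(G)$ is already a dominating set. Since there are exactly $\binom{n}{k}$ subsets of $V(G)$ of size $n-k$, and $d_{n-k}(G)$ counts precisely those of them that dominate, this claim immediately yields $d_{n-k}(G) = \binom{n}{k}$.

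To establish the claim I would fix a subset $S \subseteq V(G)$ with $|S| = n-k$, so that $|V(G) \setminus S| = k$, and show $N[S] = V(G)$. A vertex of $S$ lies in $N[S]$ trivially, so consider $v \in V(G) \setminus S$. Then $v$ has at least $\delta(G) \ge k$ neighbours, and since $G$ is simple none of these neighbours is $v$ itself, so all of them lie in $V(G) \setminus \{v\}$. But $(V(G) \setminus S) \setminus \{v\}$ has only $k - 1$ elements, so it cannot contain all of the (at least $k$) neighbours of $v$; hence at least one neighbour of $v$ lies in $S$, and $v \in N[S]$. Thus $S$ dominates $G$, and the claim follows.

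This is in effect a single pigeonhole step, so I do not expect a genuine obstacle. The only points to watch are the degenerate case $k = 0$, where $S = V(G)$ and indeed $d_n(G) = 1 = \binom{n}{0}$, and the fact that a vertex is not among its own neighbours — this is exactly what makes the relevant count $k-1$ rather than $k$, so $k \le \delta(G)$ (rather than $k \le \delta(G)+1$) is the correct hypothesis. It is worth remarking that this threshold is sharp in general: if $v$ has degree $\delta(G)$, then $V(G) \setminus N[v]$ is a non-dominating set of size $n - \delta(G) - 1$, so $d_{n - \delta(G) - 1}(G) < \binom{n}{\delta(G)+1}$.
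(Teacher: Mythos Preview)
Your argument is correct. The paper does not actually prove this lemma --- it is quoted from \cite{2010Char} and marked with a \textsc{qed} box immediately after the statement --- so there is no in-paper proof to compare against. The sentence preceding the lemma (``every subset of vertices that does not omit the closed neighbourhood of \emph{some} vertex must be dominating'') is exactly the idea you implement: if $|V\setminus S|=k\le\delta(G)$ then $|V\setminus S|<|N[v]|$ for every $v$, so no closed neighbourhood can be entirely missed, and $S$ dominates. Your pigeonhole write-up, the handling of $k=0$, and the sharpness remark are all fine.
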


\begin{theorem}
\label{cor:avdlowerbound}
Let $G$ be a graph of order $n$ then $\avd(G)\geq \frac{n2^{n-1} }{2^{n}-1 }$ with equality if and only if $G \cong K_n$.
\end{theorem}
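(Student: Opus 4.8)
The plan is to funnel everything through the family $\mathcal{N}=\mathcal{N}(G)$ of \emph{non}-dominating sets of $G$ and compare $G$ with $K_n$. Write $V=V(G)$, $m=|\mathcal{N}|$ and $\sigma=\sum_{S\in\mathcal{N}}|S|$. Since $v\in N[u]\iff u\in N[v]$, a set $S$ fails to dominate exactly when $S\subseteq V\setminus N[v]$ for some $v$ (cf.\ the remark before Lemma~\ref{lem:MinDeg}); in particular $\mathcal{N}$ is downward closed with $\emptyset\in\mathcal{N}$ and $V\notin\mathcal{N}$, so $\mathcal{N}$ is a simplicial complex on $V$ with $V\notin\mathcal{N}$, whence $1\le m\le 2^n-1$. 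As every vertex lies in $2^{n-1}$ subsets of $V$, we get $D(G,1)=|\mathcal{D}(G)|=2^n-m$ and $D'(G,1)=\sum_{S\in\mathcal{D}(G)}|S|=n2^{n-1}-\sigma$, so by $(\ref{eqn:avdG})$, $\avd(G)=\dfrac{n2^{n-1}-\sigma}{2^n-m}$. A one-line cross-multiplication (both denominators positive) turns the desired $\avd(G)\ge\frac{n2^{n-1}}{2^n-1}$ into the equivalent inequality $\sigma\le\dfrac{n2^{n-1}(m-1)}{2^n-1}$, with equality corresponding to equality.

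If $G\cong K_n$ then $\mathcal{N}=\{\emptyset\}$, so $m=1$, $\sigma=0$, and both sides are $0$. So I would then assume $G\not\cong K_n$, pick $v$ with $N[v]\ne V$ so that $V\setminus N[v]\in\mathcal{N}\setminus\{\emptyset\}$ and hence $m\ge2$, and prove the strictly stronger bound $\sigma\le\frac n2(m-1)$; this suffices because for $m\ge2$ we have $\frac n2(m-1)<\frac{n2^{n-1}(m-1)}{2^n-1}$ (as $2^n-1<2^n$), which then forces strict inequality. Since $\sigma=\sum_{S\in\mathcal{N}\setminus\{\emptyset\}}|S|=(m-1)\,\av(\mathcal{N}\setminus\{\emptyset\})$, the bound to prove is exactly $\av(\mathcal{N}\setminus\{\emptyset\})\le\frac n2$. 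For this I would partition $\mathcal{N}\setminus\{\emptyset\}$ into the antipodal blocks $\mathcal{C}_k=\{S\in\mathcal{N}:|S|\in\{k,n-k\}\}$ for $1\le k\le\lfloor n/2\rfloor$, discarding empty ones --- these genuinely partition $\mathcal{N}\setminus\{\emptyset\}$ since a non-dominating set has size between $1$ and $n-1$. Writing $\mathcal{N}_j=\{S\in\mathcal{N}:|S|=j\}$, Proposition~\ref{thm:simpialhalf} gives $|\mathcal{N}_k|\ge|\mathcal{N}_{n-k}|$, so $\av(\mathcal{C}_k)$ is a weighted mean of $k$ and $n-k$ carrying at least as much weight on the smaller value; hence $\av(\mathcal{C}_k)\le\frac n2$ for every block, and Lemma~\ref{lem:AvgPart} yields $\av(\mathcal{N}\setminus\{\emptyset\})\le\frac n2$, which finishes the proof.

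One can also package the last step via Lemma~\ref{lem:AvgLower}: for $G\not\cong K_n$ we have $\mathcal{D}(G)\subsetneq\mathcal{D}(K_n)=\mathcal{P}(V)\setminus\{\emptyset\}$ with $\mathcal{D}(K_n)\setminus\mathcal{D}(G)=\mathcal{N}\setminus\{\emptyset\}$, so that equivalence (plus the trivial remark that a strict inequality on one of its sides forces strict on the other, as the two collections are unequal) reduces $\avd(K_n)<\avd(G)$ to $\av(\mathcal{N}\setminus\{\emptyset\})<\avd(G)$; the left side is $\le\frac n2$ by the block argument while $\avd(G)>\frac n2$ since $\mathcal{P}(V)$ partitions into complementary pairs of average exactly $\frac n2$ and $\mathcal{N}$, containing $\emptyset$, has average strictly below $\frac n2$. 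I expect the only real obstacle to be sharpness: Proposition~\ref{thm:simpialhalf} applied to $\mathcal{N}$ itself yields merely $\sigma\le\frac n2 m$, which is too weak for small $m$; the fix --- stripping out the empty set, i.e.\ discarding the antipodal block pairing size $0$ with size $n$, whose average is $0$ --- buys precisely the extra $\frac n2$ that is needed. The equality analysis is then immediate, reducing to $m=1$, i.e.\ $N[v]=V$ for all $v$, i.e.\ $G\cong K_n$.
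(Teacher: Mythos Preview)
Your proof is correct. Both arguments rest on Proposition~\ref{thm:simpialhalf}, but you apply it to a different simplicial complex and reach the conclusion by a different route. The paper works with $\overline{\mathcal{D}(G)}=\{S:V\setminus S\in\mathcal{D}(G)\}$, deduces $d_{n-k}\ge d_k$ for $k\le n/2$, and argues that $\avd$ is minimised when equality holds for $1\le k\le n/2$; the equality characterisation then splits into the case $\delta\ge1$ (invoking Lemma~\ref{lem:MinDeg} to force $d_1=n$ and hence $G\cong K_n$) and the case $\delta=0$ (handled via additivity over isolated vertices). You instead work with the complex $\mathcal{N}$ of non-dominating sets, reformulate the inequality algebraically as $\sigma\le \frac{n2^{n-1}(m-1)}{2^n-1}$, and prove the strictly stronger bound $\sigma\le\frac n2(m-1)$ whenever $G\not\cong K_n$ by stripping the empty set from $\mathcal{N}$ before applying the antipodal-block partition. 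This buys you a cleaner equality analysis---no appeal to Lemma~\ref{lem:MinDeg}, no separate treatment of isolated vertices---and the alternative packaging via Lemma~\ref{lem:AvgLower} (comparing $\mathcal{D}(G)\subsetneq\mathcal{D}(K_n)$) makes the comparison with $K_n$ especially transparent. The paper's route, on the other hand, yields the structural fact $d_{n-k}\ge d_k$ about the domination sequence itself, which is of independent interest.
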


\begin{proof}
Let $\overline{\mathcal{D}(G)}$ be the collection of subsets $S \subseteq V(G)$ such that $V(G)-S$ is a dominating set of $G$. Note $\overline{\mathcal{D}(G)}$ is a simplicial complex. Therefore by Theorem \ref{thm:simpialhalf} for all $k \leq \frac{n}{2}$,
$$d_{n-k} = |\{S \in \overline{\mathcal{D}(G)} : |S|=k\}| \geq |\{S \in \overline{\mathcal{D}(G)} : |S|=n-k\}| = d_k.$$
It follows that $\avd(G)$  is minimized if $d_{n-k} = d_k$ for all $k \leq \frac{n}{2}$. As $d_n=1$ and $d_0=0$ this cannot happen for $k = 0$, but 
$\avd(G)$ will be minimized if and only if $d_{n-k} = d_k$ for all $1 \leq k \leq \frac{n}{2}$, and this does occur for $K_n$. Thus $\avd(G) \geq \frac{n2^{n-1} }{2^{n}-1 } > \frac{1}{2}$. To find all extremal graphs,  suppose $d_{n-k} = d_k$ for all $1 \leq k \leq \frac{n}{2}$. 

First we assume that $\delta(G)\geq 1$. By Lemma \ref{lem:MinDeg}, $d_{n-1}=n$ and therefore $d_1=n$. A dominating set of order one must be a vertex of degree $n-1$. Therefore $G$ has $n$ vertices of degree $n-1$ and is hence $K_n$. Now suppose $\delta(G) = 0$ and $G$ has $r\geq1$ isolated vertices. Then $D(G,x) = x^rD(H,x)$ for some isolate-free graph on $n-r$ vertices. Furthermore

$$\avd(G) = \frac{D'(G,1)}{D(G,1)} = \frac{rD(H,1)+D'(H,1)}{D(H,1)} = r+\avd(H) \geq r+\frac{n-r}{2} \geq \frac{n+1}{2}.$$ 
If $n=1$, then $G \cong K_1$ and $\frac{n+1}{2} = \frac{n2^{n-1}}{2^{n}-1}$. An easy induction shows that $\frac{n+1}{2} > \frac{n2^{n-1}}{2^{n}-1}$ for $n \geq 2$, completing the proof.
%
%
\end{proof}

\section{Bounds}\label{sec:bounds}

\subsection{General graphs}

For a graph on $n$ vertices, we have seen that $\avd(G) \leq n$, with the bound achieved uniquely by $\overline{K_n}$. However, can we say more if we insist on the graph being connected? Or even just having no isolated vertices? The lower bound occurs for complete graphs, so no improvement is possible there, but the upper bound leaves some room for improvement. We shall do so first in terms of $\delta$, the minimum degree.

For a dominating set $S$ of a graph $G$ let 
$$a(S) = \{v \in S : S-v \notin \mathcal{D}(G)\},$$
the set of \emph{critical} vertices of $S$ with respect to domination (in that their removal makes the set no longer dominating). This parameter is key to improving the upper bound. we will need first an expression for the sum of $a(S)$ over all dominating sets.
 
\begin{lemma}
\label{lem:sumaS}
For a graph $G$ with $n$ vertices.
  
$$\sum\limits_{S \in \mathcal{D}(G)}|a(S)| = 2D'(G,1)-nD(G,1).$$
\end{lemma}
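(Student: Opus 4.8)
The plan is to count, by double counting, the number of pairs $(S, v)$ where $S$ is a dominating set and $v \in a(S)$, i.e. $v \in S$ and $S - v$ is not dominating. So
$$\sum_{S \in \mathcal{D}(G)} |a(S)| = \big|\{(S,v) : S \in \mathcal{D}(G),\ v \in S,\ S - v \notin \mathcal{D}(G)\}\big|.$$
I would reorganize this sum by first choosing the vertex $v$, and then counting the dominating sets $S$ containing $v$ for which $S - v$ fails to dominate. Equivalently, writing $T = S - v$ (so $v \notin T$), I want to count sets $T \subseteq V(G) - v$ such that $T \cup \{v\} \in \mathcal{D}(G)$ but $T \notin \mathcal{D}(G)$. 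The number of $T$ with $T \cup \{v\}$ dominating is the number of dominating sets of $G$ that contain $v$; the number of $T$ with $T$ itself dominating (and $v \notin T$) is the number of dominating sets not containing $v$. So the count for a fixed $v$ is
$$\#\{S \in \mathcal{D}(G) : v \in S\} - \#\{S \in \mathcal{D}(G) : v \notin S,\ S \cup \{v\} \in \mathcal{D}(G)\},$$
but since $S \in \mathcal{D}(G)$ already implies $S \cup \{v\} \in \mathcal{D}(G)$, the subtracted term is just $\#\{S \in \mathcal{D}(G) : v \notin S\}$. Hence the count for a fixed $v$ equals (number of dominating sets containing $v$) $-$ (number of dominating sets avoiding $v$).

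Next I would evaluate each of these two quantities in terms of the domination polynomial. The total number of dominating sets is $D(G,1)$. The sum over all $v$ of the number of dominating sets containing $v$ is $\sum_{S \in \mathcal{D}(G)} |S| = D'(G,1)$ by equation~$(\ref{eqn:avdG})$ (more precisely, $D'(G,1) = \sum_k k\, d_k(G) = \sum_{S} |S|$). Correspondingly, the sum over all $v$ of the number of dominating sets avoiding $v$ is $\sum_{S \in \mathcal{D}(G)} (n - |S|) = n D(G,1) - D'(G,1)$. Summing the per-vertex count over all $n$ vertices therefore gives
$$\sum_{S \in \mathcal{D}(G)} |a(S)| = D'(G,1) - \big(n D(G,1) - D'(G,1)\big) = 2 D'(G,1) - n D(G,1),$$
as claimed.

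There is no real obstacle here; the argument is a straightforward double count, and the only point requiring a moment's care is the observation that if $S$ is already dominating then so is any superset, so "dominating sets $T$ with $T \cup \{v\}$ dominating but $T$ not" is in bijection (via $T \mapsto T \cup \{v\}$) with "dominating sets containing $v$ whose deletion of $v$ is not dominating" — which is exactly the set $a(S)$ membership we started with, confirming the count is set up correctly. I would present the whole thing as a single chain of equalities after setting up the bijective reindexing.
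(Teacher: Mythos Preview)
Your argument is correct and is essentially the same as the paper's proof: both switch the order of summation to count, for each vertex $v$, the dominating sets containing $v$ whose removal of $v$ destroys domination, observe via the superset-closure of $\mathcal{D}(G)$ that this count equals $|\mathcal{D}_{+v}(G)|-|\mathcal{D}_{-v}(G)|$, and then sum over $v$ using $\sum_v |\mathcal{D}_{+v}(G)|=D'(G,1)$ and $\sum_v |\mathcal{D}_{-v}(G)|=nD(G,1)-D'(G,1)$. The only cosmetic difference is that the paper phrases the key step as an explicit bijection between $\mathcal{D}_{+v}(G)\setminus a_v(G)$ and $\mathcal{D}_{-v}(G)$, whereas you phrase it as a set containment and subtraction.
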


\begin{proof}
For a vertex $v \in V(G)$ let $a_v(G) = \{S \in\mathcal{D}(G) : S-v \notin \mathcal{D}(G)\}$, and let $\mathcal{D}_{+v}(G)$ denote the collection of dominating sets which contain $v$. Let $\mathcal{D}_{-v}(G)$ denote the collection of dominating sets which does not contain $v$. Clearly $a_v(G) \subseteq \mathcal{D}_{+v}(G)$. We will now show is a one-to-one correspondence between $\mathcal{D}_{+v}(G)-a_v(G)$ and $\mathcal{D}_{-v}(G)$. For any $S \in \mathcal{D}_{+v}(G)-a_v(G)$, $S-v \in \mathcal{D}(G)$ so clearly $S-v \in \mathcal{D}_{ -v}(G)$. Furthermore if $S\in \mathcal{D}_{-v}(G)$ then $S \cup \{v\} \in \mathcal{D}_{+v}(G)$ and $S \cup \{v\} \notin a_v(G)$. As the maps are injective, it follows that  $|\mathcal{D}_{+v}(G)-a_v(G)|=|\mathcal{D}_{-v}(G)|$ and as $a_v(G) \subseteq \mathcal{D}_{+v}(G)$ we have $|a_v(G)| = |\mathcal{D}_{+v}(G)|-|\mathcal{D}_{-v}(G)|$. Furthermore

\[\sum\limits_{v \in V(G)}|\mathcal{D}_{+v}(G)| = \sum_{i=1}^n i \cdot d(G,i) = D'(G,1)  \refstepcounter{eqcount} \label{eqn:dvi} \tag{\theeqcount}\]

\noindent and
 
\[\sum\limits_{v \in V(G)}|\mathcal{D}_{ -v}(G)| = \sum_{i=1}^n (n-i) \cdot d(G,i) = nD(G,1) - D'(G,1)  \refstepcounter{eqcount} \label{eqn:dvn-i} \tag{\theeqcount}.\]

\noindent Therefore

$$\sum\limits_{S \in \mathcal{D}(G)}|a(S)| = \sum\limits_{v \in V(G)}a_v(G) = \sum\limits_{v \in V(G)} \left(|\mathcal{D}_{+v}(G)|-|\mathcal{D}_{-v}(G)| \right)= 2D'(G,1)-nD(G,1).$$

\end{proof}

%
%
%
%
%

In order to get to our upper bound, we need to partition $a(S)$. Let $S$ be a dominating set of $G$ containing the vertex $v$. By definition $v \in a(S)$ if and only if $S-v$ is not a dominating in $G$. Therefore $v \in a(S)$ if and only if there exists $u \in N[v]$ such that among the vertices of $S$, $u$ is only dominated by $v$ ($u$ could very well be $v$). We will call such a vertex $u$ a \emph{private neighbour} of $v$ with respect to $S$. Let $\Priv{S}{v}$ denote the collection of all private neighbours of $v$ with respect to $S$, that is,
$$\Priv{S}{v} = \{ u \in N[v] : N[u] \cap S = \{v\}\}.$$
Note $v \in a(S)$ if and only if $\Priv{S}{v} \neq \emptyset$. Moreover, for $v \in a(S)$, note that $\Priv{S}{v} \cap S \subseteq \{v\}$. We now partition $a(S) = a_1(S) \cup a_2(S)$, where 

\vspace{-6mm}

\begin{align*}
a_1(S) &= \{v \in a(S) : \Priv{S}{v} \cap (V-S) \neq \emptyset \}\\
a_2(S) &= \{v \in a(S) : \Priv{S}{v} =\{v\} \}.
\end{align*}
(We allow either to be empty.) Note that if $v \in a_2(S)$ then $N(v) \subseteq V-S$. We can partition $V-S = N_1(S) \cup N_2(S)$, where 

\vspace{-6mm}

\begin{align*}
N_1(S) &= \{v \in V-S : |N[v] \cap S| = 1 \}\\
N_2(S) &= \{v \in V-S : |N[v] \cap S| \geq 2 \}.
\end{align*}
That is, $N_1(v)$ is the set of those vertices outside of $S$ that have a single neighbour in $S$, and $N_2(S)$ are those that have more than one neighbour in $S$. (Again, we allow either to be empty.) 

As an example consider the labelled $P_5$ in Figure \ref{fig:P5}. Now $S=\{v_2,v_3,v_5\}$ is a dominating set. Furthermore $a(S)=\{v_2, v_5\}$ with $a_1(S)=\{v_2\}$, $a_2(S)=\{v_5\}$, $N_1(S) = \{v_1\}$ and $N_2(S) = \{v_4\}$. Alternatively, let $S'=\{v_1, v_3, v_5\}$. Now $a(S')=\{v_1, v_3, v_5\}$ with $a_1(S')=\emptyset$, $a_2(S')=\{v_1, v_3, v_5\}$, $N_1(S') = \emptyset$ and $N_2(S') = \{v_2,v_4\}$.

\begin{figure}[h]
\def\c{0.7}
\centering
\scalebox{\c}{
\begin{tikzpicture}
\begin{scope}[every node/.style={circle,thick,draw,fill}]
    \node[label=below:$v_1$](1) at (0,0) {}; 
    \node[label=below:$v_2$](2) at (1,0) {}; 
    \node[label=below:$v_3$](3) at (2,0) {}; 
    \node[label=below:$v_4$](4) at (3,0) {}; 
    \node[label=below:$v_5$](5) at (4,0) {}; 
\end{scope}

\begin{scope}
    \path [-] (1) edge node {} (2);
	\path [-] (2) edge node {} (3);
	\path [-] (3) edge node {} (4);
	\path [-] (4) edge node {} (5);
\end{scope}
\end{tikzpicture}}
\caption{A vertex labelled $P_5$}%
\label{fig:P5}%
\end{figure}
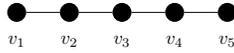

\begin{lemma}
\label{lem:a1n1}
Let $G$ be a graph. For any $S \in \mathcal{D}(G)$, $|a_1(S)| \leq |N_1(S)|.$
\end{lemma}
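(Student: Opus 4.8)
The plan is to exhibit an injection from $a_1(S)$ into $N_1(S)$, which immediately gives $|a_1(S)| \le |N_1(S)|$. For each $v \in a_1(S)$, the definition guarantees that $\Priv{S}{v} \cap (V-S) \neq \emptyset$, so I can choose some $u_v \in \Priv{S}{v} \cap (V-S)$ and define $f(v) = u_v$. (If one wants to avoid choice-language, fix a linear order on $V(G)$ and let $u_v$ be the least such vertex.)

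First I would check that $f$ indeed maps into $N_1(S)$. Since $u_v \in \Priv{S}{v}$ we have $N[u_v] \cap S = \{v\}$, and since $u_v \in V-S$ this closed neighbourhood intersection is the same as $N(u_v) \cap S$; hence $|N[u_v] \cap S| = 1$ and $u_v \in N_1(S)$ by definition.

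Next I would check injectivity. Suppose $f(v) = f(v') = u$ for $v, v' \in a_1(S)$. Then $u \in \Priv{S}{v}$ forces $N[u] \cap S = \{v\}$, while $u \in \Priv{S}{v'}$ forces $N[u] \cap S = \{v'\}$; comparing these gives $\{v\} = \{v'\}$, so $v = v'$. Therefore $f$ is an injection and $|a_1(S)| \le |N_1(S)|$.

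There is essentially no hard step here: the whole content is unwinding the definition of a private neighbour lying outside $S$ and observing that such a vertex pins down a unique vertex of $S$ (the vertex it is private to), which simultaneously certifies membership in $N_1(S)$ and forces the map to be one-to-one. The only thing to be mildly careful about is the passage between $N[u_v]$ and $N(u_v)$ when $u_v \notin S$, and the (routine) use of a choice of private neighbour when $|\Priv{S}{v} \cap (V-S)| > 1$.
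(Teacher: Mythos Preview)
Your proof is correct and uses essentially the same observation as the paper: a private neighbour $u\in V-S$ of $v$ satisfies $N[u]\cap S=\{v\}$, which simultaneously places $u$ in $N_1(S)$ and uniquely determines $v$. The only cosmetic difference is direction: the paper builds the canonical map $N_1(S)\to a_1(S)$, $u\mapsto$ the unique element of $N[u]\cap S$, and notes it is surjective (no choice needed), whereas you build an injection $a_1(S)\to N_1(S)$ by choosing a private neighbour for each $v$.
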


\begin{proof}
For any $u \in N_1(S)$, $N[u] \cap S \in a_1(S)$. Therefore the map $f:N_1(S) \rightarrow a_1(S)$ where $f(v) = N[v] \cap S$ is surjective, so $|N_1(S)| \geq |a_1(S)|$.
\end{proof}

For graph $G$ containing a vertex $v$ let $p_v(G)$ denote the collection of subsets of $V-N[v]$ which dominate $G-v$ (and hence they dominate $G-N[v]$ as well). We are now ready to improve our upper bound for graph with no isolated vertices. 

\begin{theorem}
\label{thm:3/4}
Let $G$ be a graph with $n \geq 2$ vertices and minimum degree $\delta \geq 1$. Then 
$$\avd(G) \leq \frac{2n(2^{\delta}-1)+n}{3(2^{\delta}-1)+1},$$
and so  $\avd(G) \leq \frac{3n}{4}$.
\end{theorem}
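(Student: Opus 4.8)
The plan is to compare $D'(G,1)$ with $D(G,1)$ through the quantity $\sum_{S\in\mathcal D(G)}|a(S)|$, exploiting Lemma~\ref{lem:sumaS} together with the partition $a(S)=a_1(S)\cup a_2(S)$. Write $N=D(G,1)=|\mathcal D(G)|$, and recall from $(\ref{eqn:dvn-i})$ that $\sum_{v\in V(G)}|\mathcal D_{-v}(G)|=nN-D'(G,1)$. The crux is to establish two inequalities, $\sum_{S}|a_1(S)|\le nN-D'(G,1)$ and $\sum_{S}|a_2(S)|\le(nN-D'(G,1))/(2^\delta-1)$; adding them and using $\sum_S|a(S)|=2D'(G,1)-nN$ from Lemma~\ref{lem:sumaS} gives $2D'(G,1)-nN\le(nN-D'(G,1))\cdot 2^\delta/(2^\delta-1)$, and solving this linear inequality for $D'(G,1)/N=\avd(G)$ yields exactly $\avd(G)\le\bigl(2n(2^\delta-1)+n\bigr)/\bigl(3(2^\delta-1)+1\bigr)$. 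The bound $\avd(G)\le 3n/4$ then follows because, as a function of $\delta\ge 1$, the right-hand side is decreasing and equals $3n/4$ at $\delta=1$.

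For the $a_1$-estimate I would first apply Lemma~\ref{lem:a1n1} to get $\sum_S|a_1(S)|\le\sum_S|N_1(S)|$, then switch the order of summation: $\sum_S|N_1(S)|=\sum_{u\in V(G)}\#\{S\in\mathcal D(G):u\in N_1(S)\}$. Since $u\in N_1(S)$ forces $u\notin S$, every such $S$ lies in $\mathcal D_{-u}(G)$, so $\#\{S:u\in N_1(S)\}\le|\mathcal D_{-u}(G)|$, and summing over $u$ gives $\sum_S|a_1(S)|\le\sum_u|\mathcal D_{-u}(G)|=nN-D'(G,1)$.

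For the $a_2$-estimate, the key step is a bijection between the pairs $(S,v)$ with $S\in\mathcal D(G)$ and $v\in a_2(S)$, and the pairs $(v,T)$ with $v\in V(G)$ and $T\in p_v(G)$: if $v\in a_2(S)$ then $\Priv{S}{v}=\{v\}$ forces $S\cap N[v]=\{v\}$, and since no vertex other than $v$ is a private neighbour of $v$, $T:=S-v$ is a subset of $V-N[v]$ dominating $G-v$, i.e.\ $T\in p_v(G)$; conversely $T\cup\{v\}$ is a dominating set in which $v$ is critical with $\Priv{T\cup\{v\}}{v}=\{v\}$ (each neighbour $u$ of $v$ is dominated by $G-v$, hence has a neighbour $x\in T$ with $x\neq v$, so $|N[u]\cap(T\cup\{v\})|\ge 2$ and $u$ is not a private neighbour of $v$). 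This gives $\sum_S|a_2(S)|=\sum_{v\in V(G)}|p_v(G)|$. To bound $|p_v(G)|$, I would check that $(T,W)\mapsto T\cup W$, for $T\in p_v(G)$ and $\emptyset\neq W\subseteq N(v)$, is an injection into $\mathcal D_{-v}(G)$: it lands there since $T$ dominates $G-v$ while $W$ dominates $v$ and $v\notin T\cup W$, and it is injective since $T=(T\cup W)\setminus N[v]$ and $W=(T\cup W)\cap N(v)$ are recoverable. As there are $2^{\deg v}-1\ge 2^\delta-1$ admissible sets $W$, this gives $|p_v(G)|\le|\mathcal D_{-v}(G)|/(2^\delta-1)$, and summing over $v$ completes the estimate.

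The main obstacle, I expect, is the bookkeeping in the $a_2$ step: one must verify carefully that $T\cup\{v\}$ places $v$ specifically in $a_2$ (not merely in $a(S)$), and that the auxiliary map into $\mathcal D_{-v}(G)$ is simultaneously well-defined and injective. By contrast the $a_1$ estimate and the concluding algebra are routine, and the passage from the $\delta$-dependent bound to $\avd(G)\le 3n/4$ is a one-line monotonicity check.
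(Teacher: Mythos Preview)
Your proposal is correct and follows essentially the same route as the paper: the identity $\sum_S|a_2(S)|=\sum_v|p_v(G)|$ via the bijection $S\leftrightarrow S-v$, the injection $(T,W)\mapsto T\cup W$ into $\mathcal D_{-v}(G)$ to bound $|p_v(G)|$, the estimate $\sum_S|a_1(S)|\le nN-D'(G,1)$ from Lemma~\ref{lem:a1n1}, and the final algebra are all exactly what the paper does. The only cosmetic difference is that the paper bounds $\sum_S|N_1(S)|\le\sum_S|V-S|$ directly (using $N_1(S)\subseteq V-S$) rather than switching the order of summation first, but the resulting inequality is identical.
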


\begin{proof}

We begin by showing

\[\sum_{S \in \mathcal{D}(G)}|a_2(S)|= \sum_{v \in V(G)}|p_v(G)|  \refstepcounter{eqcount} \label{eqn:a2} \tag{\theeqcount}.\]

It suffices to show for any $S \in \mathcal{D}(G)$ containing $v$, $v \in a_2(G)$ if and only if $S-v \in p_v(G)$. Suppose $v \in a_2(S)$. By definition of $a_2(G)$, $v \in a_2(G)$ if and only if $\Priv{S}{v} = \{v\}$. Therefore $N[v] \cap S = \{v\}$ and $N(v) \subseteq N_2(S)$. Furthermore $S-v \subseteq V-N[v]$ and dominates $G-v$ and thus $S-v \in p_v(G)$. Conversely, suppose $S-v \in p_v(G)$. By definition of $p_v(G)$, $S-v \notin \mathcal{D}(G)$ but $S \in \mathcal{D}(G)$. Therefore $v \in a(S)$. However every neighbour of $v$ is already dominated by $S-v$; therefore, $N(v) \subseteq N_2(S)$ and $v \in a_2(G)$.

For now fix $v \in V(G)$. By definition every $S \in p_v(G)$ dominates $G-v$ but does not contain any vertices of $N[G]$. Therefore for any non-empty $T \subseteq N(v)$, $S \cup T \in \mathcal{D}_{ -v}(G)$. Furthermore for $S_1, S_2 \in p_v(G)$ and $T_1, T_2 \in N(v)$ if $S_1 \cup T_1 = S_2 \cup T_2$ then $S_1 = S_2$ and $T_1 = T_2$. Let $\mathcal{P}( N(v))$ denote the power set of $N(v)$. Then there is an injective map from $p_v(G) \times (\mathcal{P}( N(v))-\emptyset)$ to $\mathcal{D}_{ -v}(G)$ and hence $(2^{\deg(v)}-1)|p_v(G)| \leq |\mathcal{D}_{-v}(G)|$. So together with $(\ref{eqn:dvn-i})$ and $(\ref{eqn:a2})$ we obtain

$$\sum_{S \in \mathcal{D}(G)}|a_2(S)| = \sum_{v \in V(G)}|p_v(G)| \leq \sum_{v \in V(G)}\frac{|\mathcal{D}_{-v}(G)|}{2^{\deg(v)}-1} \leq \sum_{v \in V(G)}\frac{|\mathcal{D}_{-v}(G)|}{2^{\delta}-1} = \frac{nD(G,1)-D'(G,1)}{2^{\delta}-1}.$$

By Lemma \ref{lem:a1n1}, $|a_1(S)| \leq |N_1(S)|$. So together with $(\ref{eqn:dvn-i})$ we obtain
$$\sum_{S \in \mathcal{D}(G)}|a_1(S)| \leq \sum_{S \in \mathcal{D}(G)}|N_1(S)| \leq \sum_{S \in \mathcal{D}(G)}|V-S|=\sum_{v \in V(G)}|\mathcal{D}_{-v}(G)| = nD(G,1)-D'(G,1).$$
By Lemma \ref{lem:sumaS},  $\sum\limits_{S \in \mathcal{D}(G)}|a(S)| = 2D'(G,1)-nD(G,1),$ and hence from 
\[ \sum_{S \in \mathcal{D}(G)} |a(S)| = \sum_{S \in \mathcal{D}(G)} |a_1(S)| + \sum_{S \in \mathcal{D}(G)} |a_2(S)|\]
we have that
$$2D'(G,1)-nD(G,1) \leq nD(G,1)-D'(G,1) + \frac{nD(G,1)-D'(G,1)}{2^{\delta}-1}.$$
From this it follows that 
$$\frac{D'(G,1)}{D(G,1)} \leq \frac{2n(2^{\delta}-1)+n}{3(2^{\delta}-1)+1}.$$

Finally, one can verify that as $\delta \geq 1$, 
\[ \frac{2n(2^{\delta}-1)+n}{3(2^{\delta}-1)+1} \leq \frac{3n}{4},\]
and we are done.
\end{proof}


Theorem \ref{thm:3/4} shows all graphs with no isolated vertices have $\avd(G) \leq \frac{3n}{4}$. However for $\delta \geq 4$ the bound can be  improved again, if we are even more careful with our counting. Again, we shall need a couple of technical lemmas first.

\begin{lemma}
\label{lem:edgeidentity}
For any graph $G$,

$$\sum_{S \in \mathcal{D}(G)} |N_1(S)| = \sum_{e \in E(G)} |\mathcal{D}(G)-\mathcal{D}(G-e)|.$$
\end{lemma}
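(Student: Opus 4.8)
The plan is to prove the identity by a double-counting argument: I will express both sides as the cardinality of a single set of triples $(u,w,S)$.

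First I would rewrite the left-hand side. Since $v \in N_1(S)$ forces $v \notin S$, we have $N[v] \cap S = N(v) \cap S$, so $N_1(S) = \{v \in V-S : |N(v) \cap S| = 1\}$. For each pair $(v,S)$ with $v \in N_1(S)$ there is then a \emph{unique} vertex $w$, namely the single neighbour of $v$ lying in $S$. Hence
$$\sum_{S \in \mathcal{D}(G)} |N_1(S)| = \bigl| \{ (v,w,S) : S \in \mathcal{D}(G),\ v \notin S,\ w \in S,\ N(v) \cap S = \{w\} \} \bigr|.$$

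Next I would analyze the right-hand side by understanding, for a fixed edge $e = xy \in E(G)$, when a dominating set $S$ of $G$ fails to dominate $G-e$. Deleting $e$ changes only the closed neighbourhoods of $x$ and $y$, so some vertex $v$ has $N_{G-e}[v] \cap S = \emptyset$ while $N_G[v] \cap S \neq \emptyset$ only if $v \in \{x,y\}$, in which case (say $v = x$) this says exactly $N[x] \cap S = \{y\}$. Thus $S \in \mathcal{D}(G) - \mathcal{D}(G-e)$ iff $N[x] \cap S = \{y\}$ or $N[y] \cap S = \{x\}$, and these two conditions are mutually exclusive, since both holding would put $x,y \in S$, contradicting $N[x] \cap S = \{y\}$ (as $x \in N[x]$). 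Therefore $|\mathcal{D}(G) - \mathcal{D}(G-e)|$ equals the number of pairs $(u,w)$ with $\{u,w\} = \{x,y\}$ and $N[u] \cap S = \{w\}$, and summing over all edges,
$$\sum_{e \in E(G)} |\mathcal{D}(G) - \mathcal{D}(G-e)| = \bigl| \{ (u,w,S) : uw \in E(G),\ S \in \mathcal{D}(G),\ N[u] \cap S = \{w\} \} \bigr|.$$

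Finally I would match the two triple-sets: the condition ``$uw \in E(G)$ and $N[u] \cap S = \{w\}$'' is equivalent to ``$u \notin S$, $w \in S$, and $N(u) \cap S = \{w\}$'' (the membership $u \in N[u]$ forces $u \notin S$, and $w \in N(u)$ recovers $uw \in E(G)$), which is precisely the description obtained for the left-hand side. The two sets coincide, and the identity follows. The only step needing genuine care is the characterization of $\mathcal{D}(G) - \mathcal{D}(G-e)$ — namely that only the endpoints of $e$ can become undominated and that the two ``private endpoint'' conditions never occur simultaneously; the rest is bookkeeping.
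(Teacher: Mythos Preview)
Your proof is correct and follows essentially the same double-counting strategy as the paper: both arguments rest on the characterization that $S \in \mathcal{D}(G)-\mathcal{D}(G-e)$ for $e=xy$ exactly when one endpoint lies in $N_1(S)$ and the other is its unique neighbour in $S$. The paper establishes this by a three-case analysis on the positions of the endpoints relative to $S$, whereas you package it more compactly via ordered triples and the mutual-exclusivity observation; the content is the same.
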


\begin{proof}

It suffices to show for every dominating set $S \in \mathcal{D}(G)$ there are exactly $N_1(S)$ edges $e = \{u,v\}$ in $G$ such that $S \notin \mathcal{D}(G-e)$. For every $S \in \mathcal{D}(G)$ consider the edge $e$ in $G$. If $e$ goes from a vertex $v \in N_1(S)$ to some vertex in $S$ then $v$ is not dominated by $S$ in $G-e$, so $S \in \mathcal{D}(G)-\mathcal{D}(G-e)$. 

Conversely suppose $e$ does not go from a vertex in $N_1(S)$ to some vertex in $S$; we need to show that $S \notin \mathcal{D}(G)-\mathcal{D}(G-e)$. Note that in $G-e$, $S$ necessarily dominates every vertex other than possibly $u$ and $v$. Therefore $S\in \mathcal{D}(G-e)$ if and only if $S$ dominates both $u$ and $v$ in $G-e$. Consider the following 3 cases:

\vspace{3mm}
\noindent \emph{Case 1:} $u,v \in S$. Then both $u$ and $v$ dominate themselves in $S$ so $S$ is a dominating set in $G-e$. Therefore $S \notin \mathcal{D}(G)-\mathcal{D}(G-e)$.
\vspace{1mm}

\vspace{1mm}
\noindent \emph{Case 2:} $u,v \notin S$. As $S$ is a dominating set of $G$, there exists vertices $x,y \in S$ (possibly $x=y$) such that $x$ and $y$ are adjacent to $u$ and $v$ respectively in $G$. Note $x$ and $y$ are still adjacent to $u$ and $v$ respectively in $G-e$. Therefore $S$ is a dominating set in $G-e$ and $S \notin \mathcal{D}(G)-\mathcal{D}(G-e)$.
\vspace{1mm}

\noindent \emph{Case 3:} Either $u \in S$ and $v \notin S$, or $u \notin S$ and $v \in S$. Without loss of generality suppose $u \in S$ and $v \notin S$. As $e$ does not go from a vertex in $N_1(S)$ to some vertex in $S$ then $v \notin N_1(S)$ and therefore $v \in N_2(S)$. By definition of $N_2(S)$, there exists at least one other vertex $x \in S$ adjacent to $v$. Therefore $x$ is still adjacent to $v$ in $G-e$ and $S$ is a dominating set in $G-e$. Therefore $S \notin \mathcal{D}(G)-\mathcal{D}(G-e)$.
\vspace{3mm}

Therefore for every dominating set $S \in \mathcal{D}(G)$, the number of edges $e$ in $G$ which have $S \in \mathcal{D}(G)-\mathcal{D}(G-e)$ is exactly the number of edges from $N_1(S)$ to $S$. By definition of $N_1(S)$, each vertex in $N_1(S)$ is adjacent to exactly one vertex in $S$. Therefore, the number of edges $e$ in $G$ which have $S \in \mathcal{D}(G)-\mathcal{D}(G-e)$ is exactly $|N_1(S)|$.

\end{proof}

\begin{lemma}
\label{lem:edgeremoval}
\textnormal{\cite{2012Recurr}} Let $G$ be a graph. For every edge $e=\{u,v\}$ of $G$, 

$$|\mathcal{D}(G)-\mathcal{D}(G-e)|=|p_u(G-e)|+|p_v(G-e)|-|p_u(G)|-|p_v(G)|.$$
\QED
\end{lemma}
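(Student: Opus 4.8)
The plan is to reduce both sides of the identity to the same explicit family of vertex subsets. Note first that $\mathcal{D}(G-e)\subseteq\mathcal{D}(G)$ (deleting an edge only shrinks closed neighbourhoods, so a set dominating $G-e$ dominates $G$), hence $|\mathcal{D}(G)-\mathcal{D}(G-e)|$ simply counts the dominating sets $S$ of $G$ that fail to dominate $G-e$. Deleting $e=\{u,v\}$ leaves the closed neighbourhood of every vertex other than $u$ and $v$ unchanged, and $N_{G-e}[u]=N_G[u]\setminus\{v\}$, $N_{G-e}[v]=N_G[v]\setminus\{u\}$; so such an $S$ fails to dominate $G-e$ precisely when $u$ loses its only dominator, i.e. $S\cap N_G[u]=\{v\}$, or $v$ does, i.e. $S\cap N_G[v]=\{u\}$. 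These two events are mutually exclusive (the first forces $v\in S,\ u\notin S$ and the second the reverse). Writing $A_u=\{S\in\mathcal{D}(G):S\cap N_G[u]=\{v\}\}$ and defining $A_v$ by the symmetric condition, this gives $|\mathcal{D}(G)-\mathcal{D}(G-e)|=|A_u|+|A_v|$.

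The key step is then to show $|A_u|=|p_u(G-e)|-|p_u(G)|$; adding this to its $u\leftrightarrow v$ mirror image yields the claimed formula. Since $v\in N_G[u]$, the condition $S\cap N_G[u]=\{v\}$ is equivalent to $v\in S$ together with $S\subseteq(V-N_G[u])\cup\{v\}=V-N_{G-e}[u]$ (which in particular forces $u\notin S$). Next, for any $S$ with $v\in S$ and $u\notin S$, $S$ dominates $G$ if and only if $S$ dominates $G-u$: the ``only if'' is immediate, and the ``if'' holds because $v$ already dominates $u$ in $G$; moreover $(G-e)-u=G-u$. Combining these observations, $A_u$ is exactly the collection of members of $p_u(G-e)$ that contain $v$. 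Consequently $|A_u|=|p_u(G-e)|-|\{S\in p_u(G-e):v\notin S\}|$, and the sets in $p_u(G-e)$ avoiding $v$ are precisely the subsets of $V-N_G[u]$ dominating $G-u$, i.e. the members of $p_u(G)$; hence $|A_u|=|p_u(G-e)|-|p_u(G)|$.

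I do not expect a genuine obstacle: the argument is a chain of exact set equalities, with no inequalities or asymptotics. The part to handle with care is the bookkeeping around the endpoints of $e$ — invoking $u\in N_G[v]$ and $v\in N_G[u]$ at the right moments, keeping track that no member of $p_u(G-e)$ or of $A_u$ can contain $u$, and checking that ``dominates $G-u$'' and ``dominates $(G-e)-u$'' really name the same condition — since a slip there would break the correspondence between $A_u$ and the $v$-containing members of $p_u(G-e)$.
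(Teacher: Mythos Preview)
Your argument is correct: the reduction to the disjoint pair $A_u,A_v$ via the observation that only $u$ and $v$ can become undominated in $G-e$ is accurate, the identification of $A_u$ with the $v$-containing members of $p_u(G-e)$ is carefully justified, and the complementary identification of the $v$-avoiding members of $p_u(G-e)$ with $p_u(G)$ is exactly right since $(V-N_{G-e}[u])\setminus\{v\}=V-N_G[u]$ and $(G-e)-u=G-u$.

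As for comparison with the paper: there is nothing to compare against. The paper states this lemma with a citation to \cite{2012Recurr} and gives no proof of its own (note the bare \QED). So your write-up is not an alternative to the paper's argument but rather fills in a result the authors imported wholesale.
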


We are now ready to prove another upper bound for $\avd(G)$.

\begin{theorem}
\label{thm:upperbound}
For any graph $G$ with no isolated vertices,

$$\avd(G) \leq \frac{n}{2}+\sum_{v \in V(G)} \frac{\deg(v)}{2^{\deg(v)+1}-2}.$$
\end{theorem}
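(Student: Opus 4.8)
The plan is to bound $\avd(G) = D'(G,1)/D(G,1)$ by sharpening the estimate from Theorem~\ref{thm:3/4}, replacing the crude bound $|a_1(S)| \le |N_1(S)| \le |V-S|$ with an exact handle on $\sum_S |N_1(S)|$ via the edge identities just established. First I would start from the partition $\sum_S |a(S)| = \sum_S |a_1(S)| + \sum_S |a_2(S)|$ together with Lemma~\ref{lem:sumaS}, so that
\[
2D'(G,1) - nD(G,1) \;=\; \sum_{S \in \mathcal{D}(G)}|a_1(S)| + \sum_{S \in \mathcal{D}(G)}|a_2(S)|.
\]
By Lemma~\ref{lem:a1n1} the first sum is at most $\sum_S |N_1(S)|$, and by equation~(\ref{eqn:a2}) the second sum equals $\sum_{v} |p_v(G)|$. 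So the inequality I want reduces to controlling $\sum_S |N_1(S)| + \sum_v |p_v(G)|$ from above.

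Next I would apply Lemma~\ref{lem:edgeidentity} to rewrite $\sum_S |N_1(S)| = \sum_{e \in E(G)} |\mathcal{D}(G) - \mathcal{D}(G-e)|$, and then Lemma~\ref{lem:edgeremoval} to expand each summand as $|p_u(G-e)| + |p_v(G-e)| - |p_u(G)| - |p_v(G)|$ for $e = \{u,v\}$. Collecting terms over all edges, $\sum_v |p_v(G)|$ appears with a coefficient of $-\deg(v)$ while terms $|p_v(G-e)|$ appear for each edge $e$ at $v$; the key structural fact is that in $G-e$ the vertex $v$ may have become isolated or not, but in any case $p_v(G-e)$ counts subsets of $V - N_{G-e}[v]$ dominating $(G-e)-v$, and one can compare this to subsets related to $\mathcal{D}_{-v}$-type counts. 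The cleanest route is probably: for fixed $v$, bound $\sum_{e \ni v} |p_v(G-e)|$ (or the relevant combination) in terms of $|\mathcal{D}_{-v}(G)|$ and a factor depending on $\deg(v)$, exploiting — as in the proof of Theorem~\ref{thm:3/4} — that attaching a nonempty subset of $N(v)$ to a set in $p_v$ injects into $\mathcal{D}_{-v}(G)$, which gives $(2^{\deg(v)}-1)|p_v(G)| \le |\mathcal{D}_{-v}(G)|$, and similarly for the $G-e$ terms. Assembling these, the combined bound on $\sum_S|N_1(S)| + \sum_v|p_v(G)|$ should come out as roughly $\sum_v \bigl(\tfrac{\deg(v)}{2^{\deg(v)}-1} + \text{something}\bigr)|\mathcal{D}_{-v}(G)|$ together with $\sum_v |\mathcal{D}_{-v}(G)| = nD(G,1) - D'(G,1)$ from~(\ref{eqn:dvn-i}).

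From there the finish is algebra: substitute back into $2D'(G,1) - nD(G,1) \le (\text{bound})$, solve the resulting linear inequality for $D'(G,1)/D(G,1)$, and identify the coefficient of $n$ as $\tfrac12 + \sum_v \tfrac{\deg(v)}{2^{\deg(v)+1}-2}$. The main obstacle I anticipate is the bookkeeping with the $G-e$ terms in Lemma~\ref{lem:edgeremoval}: one must track how $p_v(G-e)$ relates to $p_v(G)$ and $\mathcal{D}_{-v}$ across all $\deg(v)$ edges incident to $v$ without double counting, and handle the subtraction $-|p_u(G)| - |p_v(G)|$ carefully so the telescoping/cancellation with the $+\sum_v|p_v(G)|$ term produces exactly the claimed denominator $2^{\deg(v)+1}-2 = 2(2^{\deg(v)}-1)$ rather than an off-by-a-factor version. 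I would verify the constant on a small example (say a cycle or $K_{1,n-1}$, where all the polynomials are explicit) before trusting the final form.
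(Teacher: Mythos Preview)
Your plan is essentially the paper's proof: start from Lemma~\ref{lem:sumaS}, split $a(S)$ into $a_1$ and $a_2$, bound $\sum_S|a_1(S)|$ by $\sum_S|N_1(S)|$ via Lemma~\ref{lem:a1n1}, convert the latter using Lemmas~\ref{lem:edgeidentity} and~\ref{lem:edgeremoval}, add in $\sum_v|p_v(G)|$ from~(\ref{eqn:a2}), and then bound the $p_v(G-e)$ terms by an injection argument. Two execution details differ from what you anticipate, though, and both land exactly where you flagged the ``main obstacle'':

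First, the injection for $|p_v(G-e)|$ does \emph{not} target $\mathcal{D}_{-v}(G)$. If $S\in p_v(G-e)$ then $S\subseteq V\setminus N_{G-e}[v]$, which may already contain $u$; attaching a subset of $N_G(v)$ (which also contains $u$) would break injectivity. The paper instead takes $T$ ranging over nonempty subsets of $N_G[v]\setminus\{u\}=N_{G-e}[v]$, so that $S$ and $T$ are disjoint and $S\cup T\in\mathcal{D}(G)$; since $|N_G[v]\setminus\{u\}|=\deg(v)$, this gives $(2^{\deg(v)}-1)\,|p_v(G-e)|\le D(G,1)$ directly, with no appeal to~(\ref{eqn:dvn-i}). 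Second, there is no telescoping: after collecting, the edge-removal expansion plus~(\ref{eqn:a2}) yields
\[
\sum_S|a(S)|\le \sum_{v}\sum_{u\in N(v)}|p_v(G-uv)|\;-\;\sum_v(\deg(v)-1)\,|p_v(G)|,
\]
and the paper simply discards the negative term (valid since $G$ has no isolated vertices), rather than cancelling it against anything. Summing the injection bound over the $\deg(v)$ choices of $u$ then gives $2D'(G,1)-nD(G,1)\le D(G,1)\sum_v \deg(v)/(2^{\deg(v)}-1)$, and dividing by $2D(G,1)$ yields the statement.
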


\begin{proof}
By Lemma \ref{lem:a1n1}, Lemma \ref{lem:edgeidentity}, Lemma \ref{lem:edgeremoval} we obtain.

\vspace{-4mm}

$$\sum_{S \in \mathcal{D}(G)}|a_1(S)| \leq \sum_{e \in E(G)}(|p_u(G-e)|+|p_v(G-e)|-|p_u(G)|-|p_v(G)|) = \sum_{v \in V(G)}\sum_{u \in N(v)}(|p_v(G-uv)|-|p_v(G)|)$$

\noindent Together with $(\ref{eqn:a2})$ we obtain

$$\sum_{S \in \mathcal{D}(G)}|a(S)| =\sum_{S \in \mathcal{D}(G)}(|a_1(S)|+|a_2(S)|) \leq \sum_{v \in V(G)}\sum_{u \in N(v)}|p_v(G-uv)|-\sum_{v \in V(G)}(\deg(v)-1)|p_v(G)|.$$

\noindent Furthermore as $G$ has no isolated vertices we obtain

\[\sum_{S \in \mathcal{D}(G)}|a(S)| \leq \sum_{v \in V(G)}\sum_{u \in N(v)}|p_v(G-uv)| \refstepcounter{eqcount} \label{eqn:asum} \tag{\theeqcount}.\]

\noindent For each $v \in V(G)$ and $e = \{u,v\} \in E(G)$ consider $S \in p_v(G-e)$. For any nonempty $T \subseteq N_G[v]-\{u\}$, $S \cup T \in \mathcal{D}(G-e) \subseteq \mathcal{D}(G)$ and all such sets are distinct. Therefore $(2^{\deg(v)}-1)|p_v(G-e)| \leq |\mathcal{D}(G)|$ (where the degree is in the graph $G$)  and together with Lemma \ref{lem:sumaS} and $(\ref{eqn:asum})$ we obtain
$$2D'(G,1)-nD(G,1)=\sum_{S \in \mathcal{D}(G)}|a(S)| \leq \sum_{v \in V(G)}\frac{\deg(v) \cdot D(G,1)}{2^{\deg(v)}-1},$$
from which is follows that 
$$ \frac{D'(G,1)}{D(G,1)} \leq \frac{n}{2}+\sum_{v \in V(G)} \frac{\deg(v)}{2^{\deg(v)+1}-2}.$$
\end{proof}

\begin{corollary}
\label{cor:upperbound}
For a graph $G$ with minimum degree $\delta \geq 1$.

$$\avd(G) \leq \frac{n}{2} \left(1+\frac{\delta}{2^{\delta}-1} \right).$$

\noindent In particular if $\delta \geq 2 \log_2(n)$ then $\avd(G) \leq \frac{n+1}{2}.$
\end{corollary}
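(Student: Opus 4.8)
The plan is to derive this directly from Theorem~\ref{thm:upperbound} by bounding each term in the sum uniformly. First I would observe that the function $f(t) = \frac{t}{2^{t+1}-2} = \frac{t}{2(2^t-1)}$ is decreasing for $t \geq 1$: indeed $2^t - 1$ grows exponentially while $t$ grows linearly, so the ratio $t/(2^t-1)$ is decreasing, a fact one checks by comparing consecutive values or differentiating. Since every vertex $v$ has $\deg(v) \geq \delta$, we get $\frac{\deg(v)}{2^{\deg(v)+1}-2} \leq \frac{\delta}{2^{\delta+1}-2} = \frac{\delta}{2(2^\delta-1)}$ for each of the $n$ vertices. Summing over all $n$ vertices in the bound of Theorem~\ref{thm:upperbound} yields
$$\avd(G) \leq \frac{n}{2} + n \cdot \frac{\delta}{2(2^\delta-1)} = \frac{n}{2}\left(1 + \frac{\delta}{2^\delta-1}\right),$$
which is the first claimed inequality.

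For the second claim, I would substitute $\delta \geq 2\log_2(n)$, so that $2^\delta \geq n^2$ and hence $2^\delta - 1 \geq n^2 - 1$. Then $\frac{\delta}{2^\delta-1} \leq \frac{\delta}{n^2-1}$, but this still has $\delta$ in the numerator, so I need to be a little more careful — I want to show $\frac{n}{2}\cdot\frac{\delta}{2^\delta - 1} \leq \frac{1}{2}$, i.e. $n\delta \leq 2^\delta - 1$. Since $\delta \leq n$ (the degree cannot exceed $n-1 < n$), it suffices to show $n^2 \leq 2^\delta - 1$, and with $\delta \geq 2\log_2 n$ we have $2^\delta - 1 \geq n^2 - 1$. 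This is not quite $n^2$, so I would instead use the slightly cleaner route: $n \cdot \deg(v) \leq n^2 \leq 2^{\delta} \le 2^{\deg(v)}$ needs $2^{\deg(v)} - 1 \geq n^2$, which fails by $1$; the honest fix is to note $n\delta \le n(n-1) = n^2 - n \le n^2 - 1 \le 2^{2\log_2 n} - 1 \le 2^\delta - 1$ for $n \ge 1$, giving $\frac{n}{2}\cdot\frac{\delta}{2^\delta-1} \le \frac12$ and hence $\avd(G) \le \frac{n+1}{2}$.

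There is essentially no serious obstacle here; the only thing to be careful about is the monotonicity of $t \mapsto t/(2^t-1)$ on the integers $t \geq 1$ (and that the relevant degrees are at least $1$, guaranteed by $\delta \geq 1$), plus the bookkeeping with the $\pm 1$'s in the final estimate. I would present the monotonicity as a one-line remark (e.g. $\frac{t+1}{2^{t+1}-1} \le \frac{t}{2^t - 1}$ is equivalent to $2^t(t-1) + (t+1) \ge 0$, clear for $t \ge 1$) rather than invoking calculus, to keep everything elementary and self-contained.
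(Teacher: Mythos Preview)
Your argument is correct and follows essentially the same approach as the paper: both derive the first inequality from Theorem~\ref{thm:upperbound} via the monotonicity of $t\mapsto t/(2^t-1)$ for $t\ge 1$, and both handle the second claim using $\delta\le n-1$ together with $2^\delta\ge n^2$. The only cosmetic difference is that the paper invokes monotonicity once more to pass from $\delta$ to $2\log_2 n$ before bounding, whereas you bound $n\delta\le n(n-1)\le n^2-1\le 2^\delta-1$ directly; the two computations are equivalent.
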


\begin{proof}
Let $f(x)=\frac{x}{2^{x+1}-2}$. It is not hard to verify that for $x \geq 1$, $f(x)$ is a decreasing function. Therefore for all $v \in V(G)$, $f(\deg(v))\leq f(\delta)$, and by Theorem \ref{thm:upperbound}
$$\avd(G) \leq \frac{n}{2}+\sum_{v \in V(G)} \frac{\deg(v)}{2^{\deg(v)+1}-2} \leq \frac{n}{2}+\frac{n \cdot \delta}{2^{\delta+1}-2} =\frac{n}{2} \left(1+\frac{\delta}{2^{\delta}-1} \right).$$
Now suppose $\delta \geq 2 \log_2(n)$. As $\delta \leq n-1$, we know that $2\log_2(n) \leq n-1$. Again, one can verify that $g(n) = \delta/(2^\delta-1)$ is decreasing for $\delta \geq 1$, so
\begin{eqnarray*}
\avd(G) & \leq & \frac{n}{2} \left(1+\frac{\delta}{2^{\delta}-1} \right) \\
 & \leq & \frac{n}{2} \left(1+\frac{2\log_2(n)}{2^{2\log_2(n)}-1} \right) \\
  & \leq &  \frac{n}{2} \left(1+\frac{n-1}{n^2-1}\right) \\ 
   & = & \frac{n}{2} \left(1+\frac{1}{n+1}\right)\\
    & \leq & \frac{n}{2} \left(1+\frac{1}{n}\right) \\
    & = & \frac{n+1}{2}.
\end{eqnarray*}
\end{proof}

Theorem \ref{thm:3/4} and Corollary \ref{cor:upperbound} give two different upper bounds for $\avd(G)$ based on $\delta(G)$. Figure \ref{fig:MDO89} plots $\avd(G)$ sorted by minimum degree for all graphs of order $n=8$ and $n=9$, respectively. The curve in Figure \ref{fig:MDO89} is the minimum of the two bounds of Theorem \ref{thm:3/4} and Corollary \ref{cor:upperbound} evaluated for each integer $0 \leq \delta \leq n$ and linearly interpolated between each point.

\begin{figure}[!h]
\centering
\subfigure[Graphs of order 8]{\includegraphics[scale=0.35]{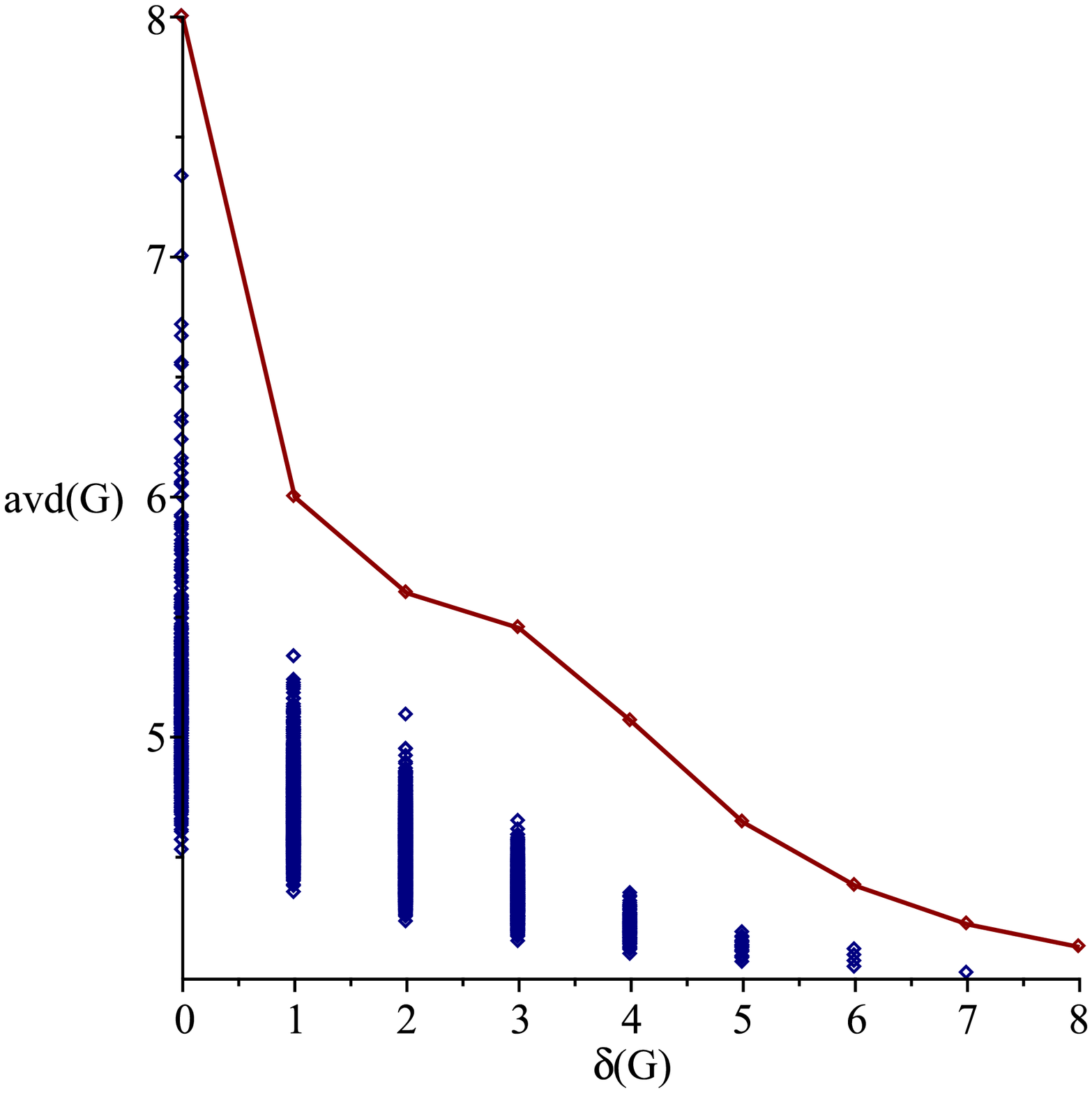}}
\qquad
\subfigure[Graphs of order 9]{\includegraphics[scale=0.35]{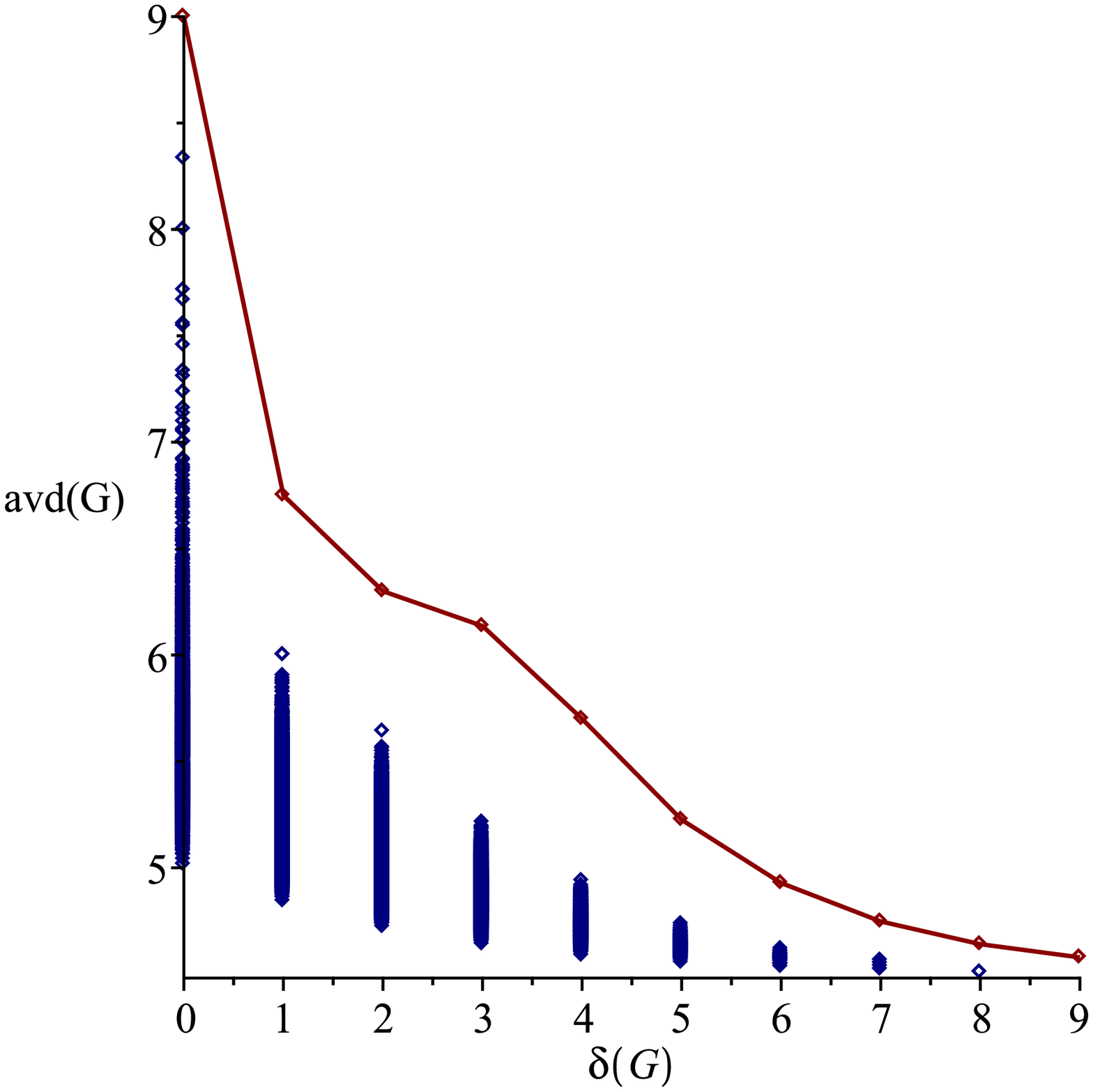}}
\caption{The bounds from Theorem \ref{thm:3/4} and Corollary \ref{cor:upperbound} compared to $\avd(G)$ for $n=8$ and $n=9$.}%
\label{fig:MDO89}%
\end{figure}

Our best upper bound for all isolate-free graphs remains $\avd(G) \leq \frac{3n}{4}$. However by Corollary \ref{cor:upperbound} if $\delta(G) \geq 4$ then $\avd(G) \leq \frac{19n}{30} \leq \frac{2n}{3}$. In fact, all graphs up to order 9 with no isolated vertices have $\avd(G) \leq \frac{2n}{3}$. This leads us to the following conjecture.

\vspace{0.2in}

\begin{conjecture}
\label{conj:2n/3}
Let $G$ be a graph with $n \geq 2$ vertices. If $G$ has no isolated vertices (so, in particular, if $G$ is connected) then $\avd(G) \leq \frac{2n}{3}$.
\end{conjecture}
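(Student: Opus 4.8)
The plan is to work with the reformulation coming from Lemma~\ref{lem:sumaS}. Since $\avd(G)=D'(G,1)/D(G,1)$ and $\sum_{S\in\mathcal{D}(G)}|a(S)|=2D'(G,1)-nD(G,1)$, the inequality $\avd(G)\le \tfrac{2n}{3}$ is equivalent to
$$\sum_{S\in\mathcal{D}(G)}|a(S)|\ \le\ \tfrac{n}{3}\,|\mathcal{D}(G)|,$$
that is, \emph{on average a dominating set has at most $n/3$ critical vertices}. Dually, via the complementation bijection $S\mapsto V(G)-S$ from $\mathcal{D}(G)$ onto the simplicial complex $\overline{\mathcal{D}(G)}=\{S\subseteq V(G):V(G)-S\in\mathcal{D}(G)\}$, one has $\avd(G)=n-\av(\overline{\mathcal{D}(G)})$, so the conjecture is exactly the statement $\av(\overline{\mathcal{D}(G)})\ge n/3$ for isolate-free $G$ (and the hypothesis ``no isolated vertices'' is precisely what forces every singleton into $\overline{\mathcal{D}(G)}$). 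I would keep both pictures available. Two remarks frame the target: the inequality fails vertex-by-vertex — for $P_5$ and $S'$ of Figure~\ref{fig:P5}, $|a(S')|=3>n/3$ — so any proof must be an averaging argument; and equality holds for $K_2$ and for $P_3$, hence by Lemma~\ref{lem:disjointunion} for every disjoint union of copies of $K_2$ and $P_3$, which pins down the extremal local structure and indicates it suffices to treat connected graphs.

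Next I would reduce to small minimum degree. Corollary~\ref{cor:upperbound} already gives $\avd(G)\le\frac{n}{2}\bigl(1+\tfrac{\delta}{2^{\delta}-1}\bigr)\le\frac{19n}{30}<\frac{2n}{3}$ once $\delta\ge 4$, so we may assume $\delta\le 3$. In fact Theorem~\ref{thm:upperbound} localizes the difficulty even more: each vertex $v$ contributes $\frac{\deg(v)}{2^{\deg(v)+1}-2}$, which is at most $\tfrac16$ as soon as $\deg(v)\ge 4$, so the whole problem is to control the contribution of vertices of degree $1,2,3$ — pendant vertices together with their support vertices, degree-$2$ vertices on induced paths and cycles, and cubic pieces — with the $K_2$ and $P_3$ examples telling us what the worst local configurations look like.

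The heart of the argument should be a sharper accounting of critical vertices around low-degree vertices. For a pendant vertex $v$ with support $u$ there is a clean identity: every dominating set contains $u$ or $v$ (never neither), and $v$ is critical in $S$ exactly when $u\notin S$, so $a_v(G)=\mathcal{D}_{-u}(G)$; moreover $S\mapsto (S-v)+u$ is a well-defined injection of $\mathcal{D}_{-u}(G)$ into $\mathcal{D}_{+u}(G)$, whence $|a_v(G)|=|\mathcal{D}_{-u}(G)|\le\tfrac12|\mathcal{D}(G)|$. The factor $\tfrac12$ is too weak alone; the idea is to \emph{amortize} it against $u$ and the rest of $N[u]$, showing the closed neighbourhood of a support vertex contributes on average at most $\tfrac13$ per vertex, by expanding each set of $\mathcal{D}_{-u}(G)$ by subsets of $N[u]$ (as in the proof of Theorem~\ref{thm:3/4}) while bookkeeping the pendant vertices separately. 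For degree-$2$ and degree-$3$ vertices one would re-run the private-neighbour analysis of Theorems~\ref{thm:3/4} and~\ref{thm:upperbound}, but crucially retain the exact quantity $\sum_S|a_1(S)|$ instead of the lossy estimate $\sum_S|a_1(S)|\le\sum_S|N_1(S)|$ of Lemma~\ref{lem:a1n1}: already on $P_3$ one checks $\sum_S|N_1(S)|+\sum_S|a_2(S)|=6$ whereas the target is $\sum_S|a(S)|=5$, so the proof must exhibit, for each $S$, enough distinct dominating sets charged \emph{simultaneously} to the $a_1$- and the $a_2$-vertices. Finally I would glue the low-degree analysis to Corollary~\ref{cor:upperbound} for the high-degree part, partitioning $\mathcal{D}(G)$ by its trace on the low-degree region and combining the pieces via Lemma~\ref{lem:AvgPart}.

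The main obstacle is precisely this low-degree regime: near-$2$-regular graphs (long paths and cycles) and graphs with many pendant vertices, where every expansion tool currently available — the $2^{\deg(v)}-1$ factors, the estimate $|a_1(S)|\le|N_1(S)|$, and the per-vertex bound $|a_v(G)|\le\tfrac12|\mathcal{D}(G)|$ for leaves — is simultaneously wasteful, and none of them is tight at $K_2$ or $P_3$. Getting past this seems to need either a genuinely global injection/discharging scheme that transfers ``credit'' between dominating sets in different parts of the graph, or a structural induction (deleting a pendant vertex with its support, or contracting an induced path, and tracking the resulting change in $D(G,x)$) combined with Lemma~\ref{lem:AvgPart}; making either robust across all isolate-free graphs — rather than only for trees or for regular graphs — is where I expect the real difficulty to lie.
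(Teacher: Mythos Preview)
The statement you are attempting is Conjecture~\ref{conj:2n/3}, which the paper explicitly leaves \emph{open}: there is no proof in the paper to compare against. The paper proves only the weaker bound $\avd(G)\le 3n/4$ for isolate-free graphs (Theorem~\ref{thm:3/4}), sharpens it in terms of the degree sequence via Theorem~\ref{thm:upperbound} and Corollary~\ref{cor:upperbound}, and settles the conjecture for quasi-regularizable graphs (Theorem~\ref{thm:boundquasi}) and computationally for $n\le 9$. Nothing more.

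Your proposal is not a proof either, and to your credit you say so yourself. After correctly rewriting the target as $\sum_{S}|a(S)|\le\frac{n}{3}|\mathcal D(G)|$ (equivalently $\av(\overline{\mathcal D(G)})\ge n/3$), correctly disposing of $\delta\ge 4$ via Corollary~\ref{cor:upperbound}, and correctly computing the leaf identity $a_v(G)=\mathcal D_{-u}(G)$ together with the injection $S\mapsto(S\setminus\{v\})\cup\{u\}$ and the $P_3$ check exposing the slack in Lemma~\ref{lem:a1n1}, you conclude that making a charging or inductive scheme ``robust across all isolate-free graphs \ldots\ is where I expect the real difficulty to lie.'' That is an honest and accurate assessment --- it is precisely the obstacle the paper could not remove. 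Every intermediate claim you make checks out, and your diagnosis of the bottleneck (degrees $1$, $2$, $3$; tightness at $K_2$ and $P_3$; the need to replace the lossy $|a_1(S)|\le|N_1(S)|$ by something sharper) matches the paper's own evidence. But no actual mechanism is produced for the $\delta\le 3$ regime: the ``amortize over $N[u]$'' step for pendants is only sketched and not shown to yield $\tfrac13$ per vertex, and the ``glue via Lemma~\ref{lem:AvgPart}'' step presupposes a partition of $\mathcal D(G)$ that you never construct. As it stands, this is a well-informed research plan for an open problem, not a proof.
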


\vspace{0.2in}

We can show that the upper bound in Conjecture \ref{conj:2n/3} is achieved for all $n\geq 2$: For $n=2$ and $n=3$, $\avd(K_2)=\frac{4}{3}$ and $\avd(K_{1,2})=2$. For any $n \geq 4$ there exists non-negative integers $k$ and $\ell$ such that $n=2k+3\ell$. Then by Lemma \ref{lem:disjointunion} any graph of the form $H=kK_2 \cup \ell K_{1,2}$ will have $\avd(H)=\frac{2n}{3}$. These graphs are not connected, but one can insist on connectivity as follows. Let $G$ be any graph on $k+ \ell$ vertices, and let $G'$ be the graph obtained by adding one leaf to $k$ vertices of $G$ and two leaves to the other $\ell$ vertices of $G$. Oboudi showed \cite{2015Oboudi} that $D(G',x)=D(H,x)$. Therefore $\avd(G')=\frac{2n}{3}$, and by choosing $G$ to be connected, the graph $G'$ will be as well. 

While we are unable to prove Conjecture~\ref{conj:2n/3}, we can provide some evidence for it. A graph $G$ is called \emph{quasi-regularizable} if one can replace each edge of $G$ with a non-negative number of parallel copies, so as to obtain a regular multigraph of minimum degree at least one. Any graph which contains a spanning subgraph which is both regular and nonempty is quasi-regularizable; in particular, any graph which contain either a perfect matching or a hamiltonian cycle is quasi-regularizable. Berge \cite{1980berge} characterized quasi-regularizable graphs as those for which $|S| \leq |N(S)|$ holds for any independent set $S$ of $G$. We will now show that for quasi-regularizable graphs,\ Conjecture~\ref{conj:2n/3} holds. 

\begin{theorem}
\label{thm:boundquasi}
If $G$ is a quasi-regularizable graph then $\avd(G) \leq \frac{2n}{3}$.
\end{theorem}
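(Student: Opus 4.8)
The plan is to establish, for \emph{every} dominating set $S$ of $G$, the pointwise bound $|a(S)| \leq |V-S| = n - |S|$, and then to feed it into the summation identity of Lemma~\ref{lem:sumaS}. Once the pointwise bound is in hand, summing over all $S \in \mathcal{D}(G)$ and using $\sum_{S \in \mathcal{D}(G)} |S| = D'(G,1)$ and $|\mathcal{D}(G)| = D(G,1)$ gives
$$2D'(G,1) - nD(G,1) \;=\; \sum_{S \in \mathcal{D}(G)} |a(S)| \;\leq\; \sum_{S \in \mathcal{D}(G)} \bigl( n - |S| \bigr) \;=\; nD(G,1) - D'(G,1),$$
which rearranges to $3D'(G,1) \leq 2nD(G,1)$, i.e.\ $\avd(G) = D'(G,1)/D(G,1) \leq \tfrac{2n}{3}$. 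So the whole theorem reduces to the pointwise claim.

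To prove the pointwise bound I would use the two disjoint partitions set up just before Lemma~\ref{lem:a1n1}: $a(S) = a_1(S) \cup a_2(S)$ and $V - S = N_1(S) \cup N_2(S)$. Lemma~\ref{lem:a1n1} already gives $|a_1(S)| \leq |N_1(S)|$ for free. For the other half, the two facts I need are: (i) $a_2(S)$ is an \emph{independent} set of $G$ --- if $u, v \in a_2(S)$ were adjacent, then $u$ would be a vertex of $N[v] \cap S$ other than $v$, contradicting $\Priv{S}{v} = \{v\}$; and (ii) $N_G(a_2(S)) \subseteq N_2(S)$ --- this is exactly the observation recorded in that same setup (and reused in the proof of Theorem~\ref{thm:3/4}) that $v \in a_2(S)$ forces $N(v) \subseteq N_2(S)$. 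Quasi-regularizability now enters through Berge's characterization: since $a_2(S)$ is independent, $|a_2(S)| \leq |N_G(a_2(S))| \leq |N_2(S)|$. As $N_1(S)$ and $N_2(S)$ are disjoint with union $V - S$, adding the two bounds yields $|a(S)| = |a_1(S)| + |a_2(S)| \leq |N_1(S)| + |N_2(S)| = |V - S|$, as required.

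The only substantive ingredient is this pointwise bound, and the sole point at which quasi-regularizability is invoked is the step $|a_2(S)| \leq |N_2(S)|$; this is precisely where a graph like the star $K_{1,n-1}$ (not quasi-regularizable for $n \geq 3$) behaves differently, since there an independent set can dwarf its neighbourhood. I do not anticipate a real obstacle --- the argument should be short --- but I would be careful with the degenerate case $a_2(S) = \emptyset$ (where Berge's inequality is just $0 \leq 0$) and would make sure the neighbourhood in (ii) is taken in $G$ itself rather than in any regular multigraph used to witness quasi-regularizability.
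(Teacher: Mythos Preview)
Your proposal is correct and follows essentially the same approach as the paper: prove the pointwise bound $|a(S)| \leq n - |S|$ via $|a_1(S)| \leq |N_1(S)|$ (Lemma~\ref{lem:a1n1}) together with $|a_2(S)| \leq |N(a_2(S))| \leq |N_2(S)|$ from Berge's characterization, then sum and apply Lemma~\ref{lem:sumaS}. The paper's write-up is nearly identical, differing only in that it infers the independence of $a_2(S)$ from the observation $N(v) \subseteq V-S$ for $v \in a_2(S)$ rather than from your direct argument with $\Priv{S}{v}$.
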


\begin{proof}
We begin by showing $|a(S)| \leq n-|S|$ for every $S \in \mathcal{D}(G)$. By Lemma \ref{lem:a1n1}, $|a_1(S)| \leq |N_1(S)|$. Therefore it suffices to show   $|a_2(S)| \leq |N_2(S)|$. For every $v \in a_2(S)$, $N(v) \subseteq V-S$ as otherwise $\Priv{S}{v} \neq \{v\}$. Furthermore, $N(v) \subseteq N_2(S)$ as otherwise $v \in a_1(S)$. Therefore $a_2(S)$ is an independent set with $N(a_2(S))\subseteq N_2(S)$. As $G$ is a quasi-regularizable graph then $|a_2(S)| \leq |N(a_2(S))| \leq |N_2(S)|$, so
\[ |a(S)| = |a_1(S)| + |a_2(S)| \leq |N_1(S)| + |N_2(S)| = n-|S|.\] 

Finally, as $|a(S)| \leq n-|S|$ then $\sum\limits_{S \in \mathcal{D}(G)}|a(S)| \leq nD(G,1)-D'(G,1)$. Thus together with Lemma \ref{lem:sumaS} we obtain
$$2D'(G,1)-nD(G,1) \leq nD(G,1)-D'(G,1) \Rightarrow \avd(G) = \frac{D'(G,1)}{D(G,1)} \leq \frac{2n}{3}.$$
\end{proof}

\subsection{Trees}
\label{sec:trees}
In this section we turn to trees (which are connected and, if are nontrivial, have $\delta \geq 1$). For every $n\geq 2$ there is a tree $T$ of order $n$ with $\avd(T) = \frac{2n}{3}$, satisfying the upper bound from Conjecture \ref{conj:2n/3} for isolate-free graphs. Examples of trees which achieve the upper bound given in Conjecture \ref{conj:2n/3} are described in the paragraph following Conjecture \ref{conj:2n/3} (if one chooses the base graph $G$ there to be a tree as well). It also remains an open question whether this is actual the upper bound amongst trees. 


However, what about the lower bound? For graphs the lower bound was achieved by complete graphs, but these are far from being trees. We show now that  $\avd(T) \geq \avd(K_{1,n-1})$, and the argument is even more involved than for the lower bound for general graphs. For this we require a result similar to that of Proposition~\ref{thm:simpialhalf}. However the proof of this is considerably more involved. For a tree $T$ of order $n$, recall $\mathcal{D}(T)$ denotes the collection of all dominating sets in $T$. For now fix $S \in \mathcal{D}(T)$. Recall in the proof of Proposition~\ref{thm:simpialhalf}, it was important to bound the number of subsets $S' \subseteq S$ where $S'$ is also a dominating set and $|S'|=k$. Let 

$$\mbox{dom}_k(S)=|\{S' \subseteq S: S' \in \mathcal{D}(T) \text{ and }|S'|=k\}|.$$

The trivial upper bound, which was used in the proof of Proposition~\ref{thm:simpialhalf}, is simply $\mbox{dom}_k(S) \leq {|S| \choose k}$, but we need something stronger for trees. Recall $a(S)=\{v \in S:S-v \notin \mathcal{D}(T)\}$. Therefore for any $S' \subseteq S$, if $S' \in \mathcal{D}(T)$ then $a(S) \subseteq S'$. Therefore $\mbox{dom}_k(S) \leq {|S|-|a(S)| \choose k-|a(S)|}$. However this is only useful if $a(S) \neq \emptyset$. On the other hand, when $a(S)=\emptyset$, $S$ is \emph{double dominating set} \cite{2000Harary}, that is, a subset $S \subseteq V(G)$ such that for every vertex $v \in V(G)$, $|N[v] \cap S| \geq 2$. The order of the smallest double dominating set is denoted $\gamma_{\times 2}(G)$. Note that for a dominating set $S$ of a tree $T$, if $|S| < \gamma_{\times 2}(T)$ then $a(S) \geq 1$. Moreover, suppose $\gamma(T)+\gamma_{\times 2}(T) \geq n+1$. Then $|S| > \gamma_{\times 2}(T)$ implies $n+1-|S|<\gamma(T)$ and hence $\mbox{dom}_k(S) =0$ for $k \leq n+1-|S|$.


\begin{theorem}
\label{thm:doubledombound}
If $T$ is a nontrivial tree then $\gamma_{\times 2}(T)+\gamma(T) \geq n+1$.
\end{theorem}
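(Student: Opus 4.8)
The plan is to induct on $n=|V(T)|$, peeling a pendant path of length one or two off the end of a longest path of $T$. The base cases $n\le 4$ are immediate: $\gamma(K_2)+\gamma_{\times 2}(K_2)=3$, and every tree on $3$ or $4$ vertices is a star $K_{1,m}$ (with $\gamma=1$, $\gamma_{\times 2}=n$) or $P_4$ (with $\gamma=2$, $\gamma_{\times 2}=4$), all satisfying $\gamma+\gamma_{\times 2}\ge n+1$. For the inductive step take $n\ge 5$; if $T$ is a star we are done as above, so assume $\mathrm{diam}(T)\ge 3$ and fix a longest path $v_0v_1\cdots v_\ell$ with $\ell\ge 3$, so $v_0$ is a leaf, $v_1$ its support, and every neighbour of $v_1$ other than $v_2$ is a leaf. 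Two elementary facts get used repeatedly. (i) If $v_1$ has two leaf neighbours $v_0,v_0'$, then $\gamma(T-v_0')=\gamma(T)$ and $\gamma_{\times 2}(T-v_0')=\gamma_{\times 2}(T)-1$: the forced leaf $v_0'$ can be dropped from any double dominating set since $v_1$ still keeps, and is doubly covered using, $v_0$. (ii) If $\deg(v_1)=2$, then for $T^*=T-\{v_0,v_1\}$ we have $\gamma(T^*)\le\gamma(T)$ and $\gamma_{\times 2}(T^*)\le\gamma_{\times 2}(T)-1$; one takes a minimum double dominating set $S\ni v_0,v_1$ of $T$, deletes $v_0,v_1$, and if necessary re-adds a single vertex of $N_{T^*}[v_2]$ to keep $v_2$ doubly covered.

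If $\deg(v_1)\ge 3$ (Case 1), then $v_1$ has a second leaf neighbour $v_0'$; applying (i) and the induction hypothesis to $T-v_0'$ (which has $n-1\ge 4$ vertices) gives
\[\gamma(T)+\gamma_{\times 2}(T)=\bigl(\gamma(T-v_0')+\gamma_{\times 2}(T-v_0')\bigr)+1\ge (n-1+1)+1=n+1.\]

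The hard case is $\deg(v_1)=2$ (Case 2), where (ii) alone only yields $\gamma(T)+\gamma_{\times 2}(T)\ge\gamma(T^*)+\gamma_{\times 2}(T^*)+1$, which is one short; we must gain an extra unit, using that (as the path is longest) every neighbour of $v_2$ other than $v_3$ is a leaf or a support. If $\deg(v_2)=2$, delete the whole pendant path $v_0v_1v_2$ and set $T^{**}=T-\{v_0,v_1,v_2\}$, a tree on $n-3\ge 2$ vertices: one checks $\gamma_{\times 2}(T)\ge\gamma_{\times 2}(T^{**})+2$ and $\gamma(T)\ge\gamma(T^{**})+1$ (for a minimum dominating set $D\ni v_1$ of $T$, if $v_2\in D$ then $(D\setminus\{v_1,v_2\})\cup\{v_3\}$ dominates $T^{**}$, and if $v_2\notin D$ then $D\setminus\{v_1\}$ does), and the hypothesis for $T^{**}$ closes the case. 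If $\deg(v_2)\ge 3$ and $v_2$ has a leaf neighbour $u$, then $u$, $v_1$ and $v_2$ all lie in every double dominating set of $T$, so $v_2$ is triply covered; hence deleting $v_0,v_1$ from a minimum double dominating set of $T$ still leaves a double dominating set of $T^*$, so $\gamma_{\times 2}(T)\ge\gamma_{\times 2}(T^*)+2$, and with $\gamma(T)\ge\gamma(T^*)$ the hypothesis for $T^*$ closes the case. Finally, if $\deg(v_2)\ge 3$ and $v_2$ has no leaf neighbour, then $v_2$ has a second support neighbour $v_1'$ (with leaf $v_0'$) besides $v_1$; here I would prove $\gamma(T)\ge\gamma(T^*)+1$ directly: starting from a minimum dominating set $D\ni v_1$ of $T$, if $v_2$ has a neighbour in $D$ other than $v_1$ then $D\setminus\{v_1\}$ dominates $T^*$; otherwise $v_1'\notin D$, so every leaf of $v_1'$ lies in $D$, minimality forces $v_1'$ to have exactly one leaf, and swapping that leaf for $v_1'$ produces another minimum dominating set in which $v_2$ does have a neighbour other than $v_1$, reducing to the previous situation. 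Combined with $\gamma_{\times 2}(T)\ge\gamma_{\times 2}(T^*)+1$ from (ii), the hypothesis for $T^*$ closes the last case.

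The main obstacle is precisely Case 2: deleting $\{v_0,v_1\}$ costs one unit too little, and one has to argue that the local configuration at $v_2$ always compensates, either by making $\gamma_{\times 2}$ drop by $2$ or by making $\gamma$ drop. The single most delicate step is the modification of a minimum dominating set in the last sub-case so that $v_2$ acquires a neighbour in it distinct from $v_1$; the remaining verifications — (i), (ii), and the two ``$+2$'' estimates — are routine checks of the domination and double-domination conditions.
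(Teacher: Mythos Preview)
Your proof is correct, but it proceeds quite differently from the paper's. The paper does not induct on $n$; instead it fixes an arbitrary double dominating set $S$ of $T$ and proves that there exist $|V-S|+1$ vertices of $T$ with pairwise disjoint closed neighbourhoods, which forces $\gamma(T)\ge n-|S|+1$ and hence $\gamma(T)+|S|\ge n+1$ for every such $S$. Their induction is on $|V-S|$: they locate a vertex $v\notin S$ farthest from a fixed leaf, peel off the branch at $v$ away from the leaf, and pick a vertex $u'$ deep in that branch whose closed neighbourhood is disjoint from the remaining tree; the inductive packing of the remaining tree together with $u'$ gives the required packing. In effect, the paper proves the stronger statement that the \emph{packing number} of $T$ is at least $n-\gamma_{\times 2}(T)+1$.

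By contrast, you run a standard longest-path induction on $n$, tracking how $\gamma$ and $\gamma_{\times 2}$ change when a pendant path of length one, two, or three is deleted. The case analysis at $v_2$ (degree two; leaf neighbour present; only support neighbours) is exactly what is needed to recover the missing unit when removing $\{v_0,v_1\}$ drops $\gamma_{\times 2}$ by only one. Your argument is self-contained and avoids the packing concept entirely; the paper's argument is shorter once the packing idea is in hand and yields the slightly stronger per-set inequality. One small point of phrasing: in your final sub-case your dichotomy should really be on whether $N[v_2]\cap D\neq\{v_1\}$ (closed neighbourhood), so that the possibility $v_2\in D$ falls into the easy branch; the logic you intend is correct, but as written ``$v_2$ has a neighbour in $D$ other than $v_1$'' could be read as open neighbourhood.
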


\begin{proof}
We can assume that $n \geq 3$, as if $n = 2$, then $T = K_2$ and so $\gamma_{\times 2}(T) = 2$, $\gamma(T)=1$ and the result holds.
Set $V(T)=V$. It is sufficient to show for any double dominating set $S$, $\gamma(T) \geq n-|S|+1$. Note that if the $m$ vertices $v_1, \ldots, v_m$ had pairwise disjoint closed neighbourhoods, then $\gamma(T) \geq m$ as any dominating set would need to contain at least one vertex from each closed neighbourhood. Therefore it is sufficient to show for any double dominating set $S$, there exists a collection of $|V-S|+1$ vertices with pairwise disjoint closed neighbourhoods. We will induct on the number of vertices in $V-S$. For $v \in V$ and $u \in N(v)$ let $B(T,v,u)$ denote the set of vertices in the same component as $u$ in $T-v$ (See Figure \ref{fig:Branch}).
 
 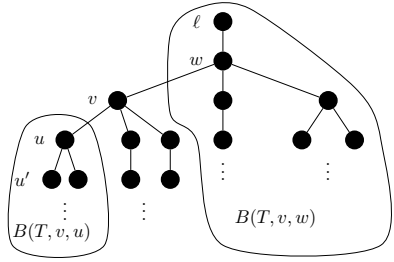
\begin{figure}[h]
\def\c{0.7}
\centering
\scalebox{\c}{
\begin{tikzpicture}

 \draw[] plot[smooth cycle] coordinates
 {(-3.75,-1.75) (-2.5,-1.75) (-2.25,-4) (-4,-4)};
\node[] at (-3.25,-3.75) {$B(T,v,u)$};

 \draw[] plot[smooth cycle] coordinates
 {(-0.5,0.5) (0.5,0.5) (2.5,-1) (3,-2) (3,-4) (0,-4) (-0.5,-2) (-1,-1.5) (-1,0)};
 \node[] at (1,-3.5) {$B(T,v,w)$};
 
\begin{scope}[every node/.style={circle,thick,draw,fill}]
    \node[label=left:$\ell$](1) at (0,0.25) {}; 
    \node[label=left:$w$](2) at (0,-0.5) {}; 
    \node[label=left:$v$](3) at (-2,-1.25) {}; 
    \node[](4) at (0,-1.25) {}; 
    \node[](5) at (2,-1.25) {};
    
    \node[label=left:$u$](6) at (-3,-2) {}; 
    \node[](7) at (-1.75,-2) {}; 
    \node[](8) at (-1,-2) {}; 
    \node[](9) at (0,-2) {}; 
    \node[](10) at (1.5,-2) {};
    \node[](11) at (2.5,-2) {};
    
    \node[label=left:$u'$](12) at (-3.25,-2.75) {}; 
    \node[](13) at (-2.75,-2.75) {}; 
    
    \node[](14) at (-1.75,-2.75) {}; 
    \node[](15) at (-1,-2.75) {}; 
\end{scope}

\begin{scope}
    \path [-] (1) edge node {} (2);
	\path [-] (2) edge node {} (3);
	\path [-] (2) edge node {} (4);
	\path [-] (2) edge node {} (5);
	\path [-] (3) edge node {} (6);
	\path [-] (3) edge node {} (7);
	\path [-] (3) edge node {} (8);
	\path [-] (4) edge node {} (9);
	\path [-] (5) edge node {} (10);
	\path [-] (5) edge node {} (11);
	\path [-] (6) edge node {} (12);
	\path [-] (6) edge node {} (13);
	\path [-] (7) edge node {} (14);
	\path [-] (8) edge node {} (15);
\end{scope}

\node at ($(-3,-3)!.25!(-3,-4)$) {\vdots};
\node at ($(-1.5,-3)!.25!(-1.5,-4)$) {\vdots};
\node at ($(0,-2)!.5!(0,-3)$) {\vdots};
\node at ($(2,-2)!.5!(2,-3)$) {\vdots};

\end{tikzpicture}}
\caption{An example of $B(T,v,u)$ and $B(T,v,w)$.}%
\label{fig:Branch}%
\end{figure}

 Let $S$ be a double dominating set, so that $S$ contains every leaf and stem of $T$ (as \emph{stem} in a tree is a vertex adjacent ot a leaf). The case where $|V-S|=0$ is vacuously true for any nontrivial tree. Assume for any nontrivial tree and some $k \geq 0$ if $|V-S|\leq k$ then there exists a collection of $|V-S|+1$ vertices with pairwise disjoint closed neighbourhoods. Now suppose $|V-S|=k+1$. Note that for any leaf in $T$, both it and its \emph{stem} (i.e. the leaf's only neighbour) must both be in $S$, otherwise $S$ is not a double dominating set. Fix a leaf $\ell \in V$. Now choose a vertex $v \notin S$ which is of maximum distance to $\ell$. Note $v$ is not a stem, as otherwise $v \in S$ which contradicts $v \notin S$. Furthermore, $\deg(v) \geq 2$, as otherwise either $v \in S$ or $S$ is not a double dominating set; in particular, $v \neq \ell$. 
 
 Let $w \in N(v)$ be the only neighbour of $v$ which is closer $\ell$ than $v$ (See Figure \ref{fig:Branch}). Note that $w \neq \ell$, as otherwise $v$ would be stem and hence belong to $S$. As $\deg(v) \geq 2$, choose $u \in N(v)-\{w\}$. Note every vertex in $B(T,v,u)$ is further from $\ell$ than $v$ and therefore $B(T,v,u)\subseteq S$. Moreover, as $v$ is not a stem, $\deg(u)\geq 2$. Therefore choose $u' \in N(u)-\{v\}$. Note that $N[u'] \subseteq B(T,v,u)$ (as in  Figure~\ref{fig:Branch}). Now set $T' = B(T,v,w)$ and $S'=S \cap B(T,v,w)$. $T'$ is a nontrivial tree as $w,\ell \in T'$. For each $x \in V(T')$, $N_T[x] = N_{T'}[x]$, except for $w$ where $N_T[x] = N_{T'}[x] \cup \{v\}$. As $v \notin S$, $|N_{T}[x] \cap S| =|N_{T'}[x] \cap S'| \geq 2$ for all $x \in V(T')$. Therefore $S'$ is a double dominating set of $T'$. Finally the only vertex in $V(T)-V(T')$ which was not in $S$ was $v$, as $v$ was the furthest vertex from $\ell$ which was not in $S$. Therefore $|V(T')-S'| = |V(T)-S|-1=k$ and by our induction hypothesis there exists a collection of $k+1$ vertices with disjoint closed neighbourhoods in $T'$. Let $P$ denote this collection. As $v \notin N_T[u']$ then $N_{T}[x] \cap N_{T}[u']=\emptyset$ for all $x \in V(T')$. Therefore $P \cup \{u'\}$ is a collection of $k+2 = |V-S|+1$ vertices with pairwise disjoint closed neighbourhoods in $T$.
\end{proof}

We need two additional lemmas on the way to finding the extremal tree with the least average order of dominating sets.

\begin{lemma}
\label{lem:MinDomSetsTrees}
\textnormal{\cite{2006DomNumTree}} $K_{1,n-1}$ has the most dominating sets amongst all trees of order $n$.
\QED
\end{lemma}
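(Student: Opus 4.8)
The statement is equivalent to $D(T,1)\le D(K_{1,n-1},1)$ for every tree $T$ of order $n$; since $D(K_{1,n-1},x)=x(1+x)^{n-1}+x^{n-1}$ gives $D(K_{1,n-1},1)=2^{n-1}+1$, the goal is to show $|\mathcal{D}(T)|\le 2^{n-1}+1$. I would prove this by induction on $n$, the cases $n\le 3$ being immediate since then $K_{1,n-1}$ is the only tree. I would not try to extract uniqueness of the extremal tree from this argument: at small orders the bound is attained by non-stars as well — for instance $D(P_4,1)=9=D(K_{1,3},1)$ and $D(P_5,1)=17=D(K_{1,4},1)$ — which is consistent with the statement only asserting that $K_{1,n-1}$ is \emph{a} maximiser.

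The engine of the induction is a leaf-deletion identity: for a leaf $\ell$ of $T$ with unique neighbour $u$, conditioning a dominating set on whether it contains $u$ yields
\[ |\mathcal{D}(T)| \;=\; 2\,\bigl|\{S\in\mathcal{D}(T-\ell):u\in S\}\bigr| \;+\; |\mathcal{D}(T-\ell-u)|, \]
because when $u\in S$ the leaf $\ell$ is dominated and is needed to dominate nothing, so it may be included or not freely, whereas when $u\notin S$ the leaf $\ell$ is forced into $S$ and $S\mapsto S-\ell$ biject onto $\mathcal{D}(T-\ell-u)$. Now take a longest path of $T$ and let $v_0-v_1-v_2-\cdots$ be one of its ends, so $v_0$ is a leaf and every neighbour of $v_1$ other than $v_2$ is a leaf. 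If $v_1$ has at least two leaf neighbours, apply the identity with $\ell=v_0$, $u=v_1$: deleting $v_1$ from $T-v_0$ isolates the remaining leaves at $v_1$, and a short check identifies $|\mathcal{D}(T-v_0-v_1)|$ with the number of dominating sets of $T-v_0$ that avoid $v_1$. Hence $|\mathcal{D}(T)|=|\mathcal{D}(T-v_0)|+\bigl|\{S\in\mathcal{D}(T-v_0):v_1\in S\}\bigr|\le(2^{n-2}+1)+2^{n-2}=2^{n-1}+1$, using the inductive hypothesis on $T-v_0$ and the trivial bound on the number of subsets of $V(T-v_0)$ containing $v_1$.

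The remaining, genuinely harder, case is $\deg(v_1)=2$, i.e.\ $T$ carries a pendant path $v_0-v_1-v_2$ of length $2$. Writing $T''=T-v_0-v_1$ (a tree on $n-2$ vertices with $v_2\in V(T'')$), the identity together with a direct count of the dominating sets of $T-v_0$ containing the leaf $v_1$ gives $|\mathcal{D}(T)|=2B(T'',v_2)+|\mathcal{D}(T'')|$, where $B(H,x)$ denotes the number of subsets of $V(H)$ dominating every vertex of $H$ except possibly $x$. Since $B(H,x)=\bigl|\{S\in\mathcal{D}(H):x\in S\}\bigr|+|\mathcal{D}(H-x)|$ can genuinely exceed $|\mathcal{D}(H)|$, the plain inductive hypothesis is too weak here, and I would strengthen the induction to prove simultaneously a companion bound $B(T,x)\le 3\cdot 2^{n-2}$ for all trees on $n\ge 2$ vertices and all $x\in V(T)$ (sharp for $K_2$ and for an endpoint of $P_3$): when $x$ is a leaf with neighbour $u$ this unwinds through the same identity as $B(T,x)=B(T-x,u)+|\mathcal{D}(T-x)|$, and when $x$ is internal one uses $|\mathcal{D}(T-x)|=\prod_i|\mathcal{D}(F_i)|$ and $\bigl|\{S\in\mathcal{D}(T):x\in S\}\bigr|=\prod_i B(F_i,r_i)$ over the components $F_i$ of $T-x$ (with $r_i$ the neighbour of $x$ in $F_i$), together with multiplicativity of $D(\cdot,x)$ over components. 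Feeding $B(T'',v_2)\le 3\cdot 2^{n-4}$ and $|\mathcal{D}(T'')|\le 2^{n-3}+1$ into $|\mathcal{D}(T)|=2B(T'',v_2)+|\mathcal{D}(T'')|$ then gives exactly $2^{n-1}+1$. The main obstacle is getting the strengthened hypothesis to close uniformly over all configurations at the end of a longest path (single-vertex components of $T-x$ must be treated separately, and one must check the companion recursions dovetail with the main one); tempting shortcuts fail — for instance, relocating the deep leaf $v_0$ onto $v_2$ need not increase $|\mathcal{D}|$, as the broom obtained this way from $P_6$ has only $29<31=D(P_6,1)$ dominating sets. One may, of course, instead simply invoke \cite{2006DomNumTree}, where exactly this extremal problem is settled.
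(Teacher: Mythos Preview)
The paper does not supply its own proof of this lemma: it is quoted from \cite{2006DomNumTree} and marked with a \QED\ symbol, so there is no argument in the paper to compare your proposal against; the authors simply invoke the literature.

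That said, your proposed route is sound and in fact closer to complete than you suggest. The leaf-deletion identity, the split into the two cases at the end of a longest path, and the strengthened induction hypothesis $B(T,x)\le 3\cdot 2^{n-2}$ for $n\ge 2$ are all correct and sufficient. The case you flag as the ``main obstacle'' --- $x$ internal, with possible singleton components in $T-x$ --- actually closes cleanly. Writing $s$ for the number of singleton components of $T-x$ and $t$ for the number of components of size at least~$2$ (so $s+t=\deg(x)\ge 2$), the inductive bounds give
\[
\prod_i B(F_i,r_i)\le 2^{s}\cdot 3^{t}\cdot 2^{(n-1-s)-2t}=3^{t}2^{\,n-1-2t},
\qquad
\prod_i |\mathcal{D}(F_i)|\le 1^{s}\cdot 3^{t}\cdot 2^{(n-1-s)-2t},
\]
and hence
\[
B(T,x)\le 3^{t}2^{\,n-1-2t}\bigl(1+2^{-s}\bigr)
=\Bigl(\tfrac{3}{4}\Bigr)^{t-1}\cdot\frac{1+2^{-s}}{2}\cdot 3\cdot 2^{\,n-2}
\le 3\cdot 2^{\,n-2},
\]
since $(3/4)^{t-1}\le 1$ for $t\ge 1$ and $(1+2^{-s})/2\le 1$ for $s\ge 0$; the boundary case $t=1,\ s=0$ is excluded by $\deg(x)\ge 2$, and the case $t=0$ (so $T=K_{1,n-1}$ with $x$ the centre) gives $B(T,x)=2^{n-1}+1\le 3\cdot 2^{n-2}$ directly. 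Thus the two hypotheses do dovetail; the only separate bookkeeping needed is the base cases $n\le 3$ and the single-vertex value $B(K_1,\cdot)=2$, which lies outside the range $n\ge 2$ of the companion bound but is handled explicitly. Your remark about non-uniqueness is also apt: $P_4$ and $P_5$ attain the maximum, so this line of argument cannot yield a uniqueness claim, and none is asserted in the lemma.
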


\begin{lemma}
\label{lem:DoubleDom2Dom}
\textnormal{\cite{2006DoubleDom2Dom}} For every nontrivial tree $T$, $2\gamma(T) \leq \gamma_{\times 2}(T)$.
\QED
\end{lemma}

\begin{lemma}
\label{lem:treehalfplus1}
If $T$ is a graph with $n$ vertices then $d_{n-k} \geq d_{k+1}$ for all $k+1 \leq \frac{n+1}{2}$.
\end{lemma}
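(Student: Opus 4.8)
The plan is to mimic the Hall's-theorem argument from Proposition~\ref{thm:simpialhalf}, but applied to the complex $\overline{\mathcal{D}(T)}$ of complements of dominating sets, with the improved counting that the tree structure affords. Recall that $d_{n-k}$ counts subsets $S\subseteq V(T)$ of size $n-k$ that are dominating, equivalently subsets $V-S$ of size $k$ lying in the simplicial complex $\overline{\mathcal{D}(T)}$; and $d_{k+1}$ counts dominating sets of size $k+1$. So I want an injection from the dominating sets of size $k+1$ into the dominating sets of size $n-k$ (i.e.\ from size-$(k+1)$ members of $\mathcal{D}(T)$ into size-$(n-k)$ members of $\mathcal{D}(T)$), or equivalently a Hall-type matching. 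The natural bipartite graph to build has one side the dominating sets $S$ with $|S|=k+1$ and the other side the dominating sets $S'$ with $|S'|=n-k$, with $S$ adjacent to $S'$ when $S\subseteq S'$; I will verify Hall's condition by a degree-counting argument, exactly as in Proposition~\ref{thm:simpialhalf} but now using $\mbox{dom}_k$-type bounds.

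First I would set up the counting. Fix a dominating set $S'$ with $|S'| = n-k$, so $|V - S'| = k$. The dominating sets $S\subseteq S'$ with $|S|=k+1$ are exactly the size-$(k+1)$ members of $\mathcal{D}(T)$ contained in $S'$, and there are $\mbox{dom}_{k+1}(S')$ of them; using $a(S')\subseteq S$ for any dominating $S\subseteq S'$, this is at most $\binom{|S'|-|a(S')|}{k+1-|a(S')|} = \binom{n-k-|a(S')|}{k+1-|a(S')|}$. The key point (to be extracted from the discussion preceding Theorem~\ref{thm:doubledombound}, together with Theorem~\ref{thm:doubledombound} and Lemma~\ref{lem:DoubleDom2Dom}) is that for the relevant range of $k$, either $a(S')\neq\emptyset$, forcing this binomial to be small, or $S'$ is a double dominating set, in which case $|S'|=n-k\geq\gamma_{\times2}(T)$ while $\gamma_{\times 2}(T)+\gamma(T)\geq n+1$ forces $k+1 = n-|S'|+1\leq\gamma(T)$, so there are \emph{no} dominating sets of size $k+1$ at all below $S'$ and the degree is $0$. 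On the other side, for a fixed dominating set $S$ with $|S|=k+1$, the dominating sets $S'\supseteq S$ with $|S'|=n-k$ are obtained by adjoining $n-2k-1$ vertices of $V-S$ to $S$; this degree is at most $\binom{n-k-1}{n-2k-1}=\binom{n-k-1}{k}$.

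The heart of the matter is then to check, for any set $\mathcal{X}$ of size-$(k+1)$ dominating sets, that $|\mathcal{X}|\le|N(\mathcal{X})|$: counting edges out of $\mathcal{X}$ from above and into $N(\mathcal{X})$ from above, it suffices that the maximum degree on the $\mathcal{X}$-side is at most the minimum nonzero degree on the $N(\mathcal{X})$-side — that is, $\binom{n-k-1}{k}\le\binom{n-k-|a(S')|}{k+1-|a(S')|}$ for every $S'$ in $N(\mathcal X)$ with positive degree. I would argue this by splitting on $|a(S')|$: when $|a(S')|\ge 1$ one has $\binom{n-k-|a(S')|}{k+1-|a(S')|}\ge\binom{n-k-1}{k}$ whenever $k+1\le\frac{n+1}{2}$ (a monotonicity-of-binomials computation, since shrinking both parameters by the same amount increases the coefficient once we are on the correct side of the middle), and when $a(S')=\emptyset$ the degree is $0$ and that $S'$ contributes nothing. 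I expect the main obstacle to be precisely this case analysis on $a(S')$ and pinning down the exact range of $k$: making sure that the only way to have $a(S')=\emptyset$ with positive degree is excluded by Theorem~\ref{thm:doubledombound}, and that the binomial inequality genuinely holds at the boundary $k+1=\frac{n+1}{2}$. Once Hall's condition is verified, the matching gives an injection from the size-$(k+1)$ dominating sets into the size-$(n-k)$ dominating sets, hence $d_{n-k}\ge d_{k+1}$, as desired.
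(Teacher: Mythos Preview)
Your overall strategy---Hall's theorem on the bipartite containment graph between $\mathcal{D}_{k+1}$ and $\mathcal{D}_{n-k}$---is exactly the paper's approach. But two things go wrong.

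First, the degree comparison is stated backwards. To match $\mathcal{D}_{k+1}$ into $\mathcal{D}_{n-k}$ by the standard degree argument, you need the \emph{minimum} degree on the $\mathcal{D}_{k+1}$-side to be at least the \emph{maximum} degree on the $\mathcal{D}_{n-k}$-side (edges out of $\mathcal{X}$ all land in $N(\mathcal{X})$, and each $S'\in N(\mathcal{X})$ absorbs at most its degree). Every $S\in\mathcal{D}_{k+1}$ has degree \emph{exactly} $\binom{n-k-1}{k}$ (all supersets of a dominating set are dominating), so what you actually need is $\mbox{dom}_{k+1}(S')\le\binom{n-k-1}{k}$ for each $S'\in\mathcal{D}_{n-k}$. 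Correspondingly, the inequality you want is $\binom{n-k-|a(S')|}{k+1-|a(S')|}\le\binom{n-k-1}{k}$, not the reverse; and indeed, since $\binom{m-1}{j-1}=\frac{j}{m}\binom{m}{j}$ and $k\le n-k-1$ in your range, shrinking both parameters \emph{decreases} the binomial here, not increases it. These two sign errors happen to cancel, so once corrected the $|a(S')|\ge 1$ case goes through.

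The real gap is the case $a(S')=\emptyset$. From $|S'|=n-k\ge\gamma_{\times 2}(T)$ and Theorem~\ref{thm:doubledombound} you only get $k+1\le\gamma(T)$, not $k+1<\gamma(T)$. Since we may assume $k+1\ge\gamma(T)$ (else $d_{k+1}=0$ trivially), the live case is exactly $k+1=\gamma(T)$ with $S'$ a \emph{minimum} double dominating set, and then $S'$ can certainly contain minimum dominating sets---its degree is \emph{not} zero. Your argument stops precisely here. The paper handles this boundary case separately: since a double dominating set contains every leaf and every stem, one picks two stem--leaf pairs $s_1,\ell_1,s_2,\ell_2\in S'$ and notes that any minimum dominating subset $A\subseteq S'$ contains exactly one of $\{s_i,\ell_i\}$ for each $i$, yielding the bound $\mbox{dom}_{k+1}(S')\le 4\binom{n-k-4}{k-1}$, which is then shown (via Pascal's identity) to be at most $\binom{n-k-1}{k}$; a handful of small trees are checked directly. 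You are missing this entire piece.
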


\begin{proof}

Fix $k  \leq \frac{n+1}{2}$. If $k+1 < \gamma(T)$ then clearly $d_{n-k} \geq d_{k+1}$ holds as $d_{k+1}=0$. So suppose for the remiander of this proof that  $k+1 \geq \gamma(T)$. We will now use Hall's Theorem again. As before, let $\mathcal{D}_k$ denote the collection of all dominating sets of order $k$. We now construct a bipartite graph with bipartition $(\mathcal{D}_{k+1}$, $\mathcal{D}_{n-k})$; two vertices $A\in \mathcal{D}_{k+1}$ and $B\in \mathcal{D}_{n-k}$ are adjacent if $A \subseteq B$. As every superset of a dominating set remains dominating, the degree of each $A\in \mathcal{D}_{k+1}$ is ${n-k-1 \choose n-2k-1}={n-k-1 \choose k}$. By the same argument used in the proof of Proposition~\ref{thm:simpialhalf}, it suffices to show for any $B \in \mathcal{D}_{n-k}$ there are at most ${n-k-1 \choose k}$ subsets of $B$ which are in $\mathcal{D}_{k+1}$.

By Theorem \ref{thm:doubledombound}, $\gamma_{\times 2}(T)+\gamma(T) \geq n+1$ and hence $k+1 \geq n+1-\gamma_{\times 2}(T)$ We now consider two cases:

\vspace{2mm}

\noindent \emph{Case 1:} Suppose $k+1 > n+1-\gamma_{\times 2}(T)$ then $\gamma_{\times 2}(T) > n-k$. For any dominating set $B\in \mathcal{D}_{n-k}$ there exists a vertex $v \in T$ such that $N[v] \cap B$ contains exactly one vertex. Let $\{u\} = N[v] \cap B$. Then $B-u$ is no longer a dominating set. Hence there are at most ${n-k \choose k+1} - {n-k-1 \choose k+1} = {n-k-1 \choose k}$ subsets of $B$ which are also in $\mathcal{D}_{k+1}$.

\vspace{2mm}

\noindent \emph{Case 2:} Suppose $k+1= n+1-\gamma_{\times 2}(T)$ then $\gamma_{\times 2}(T) = n-k$. As $\gamma_{\times 2}(T)+\gamma(T) \geq n+1$ then $\gamma(T) \geq k+1$. Furthermore, as $\gamma(T) \leq k+1$, it follows that $k+1=\gamma(T)$. For any dominating set $B\in  \mathcal{D}_{n-k}$, if $B$ is not a double dominating set then it follows for Case 1 that there are at most ${n-k-1 \choose k}$ subsets of $B$ which are also in $\mathcal{D}_{k+1}$. So suppose $B$ is a double dominating set. Let $m$ be the number stems in $T$. If $m=1$ then $T=K_{1,n-1}$. It is easy to see $k+1 = \gamma(K_{1,n-1})=1$ so $n-k=n$. Furthermore more $d_n(K_{1,n-1}) = d_1(K_{1,n-1})=1$ and therefore $d_n(K_{1,n-1}) \geq d_1(K_{1,n-1})$. Now suppose $m \geq 2$. Choose two stems $s_1$ and $s_2$ along with leaves $\ell_1$ and $\ell_2$ which are adjacent to $s_1$ and $s_2$ respectively. As $B$ is a double dominating set then $s_1, s_2, \ell_1, \ell_2 \in B$, otherwise $\ell_1$ or $\ell_2$ with not be double dominated. Furthermore if $A \subseteq B$ such that $A \in \mathcal{D}_{k+1}$, then $A$ is a minimum dominating set. Therefore $A$ contains exactly one of $s_i$ or $\ell_i$ for each $i=1,2$ and remaining $k+1$ vertices of $A$ are chosen from then remaining $n-k-4$ vertices in $B$. Therefore there are at most $4{n-k-4 \choose k-1}$ subsets $A \subseteq B$ such that $A \in \mathcal{D}_{k+1}$. If $n-k-4 \geq 1$, then ${n-k-4 \choose k-1}={n-k-5 \choose k-2}+{n-k-5 \choose k-1} \leq {n-k-4 \choose k-2} +{n-k-4 \choose k}$ and

\begin{align*}
4{n-k-4 \choose k-1}  \leq & \left({n-k-4 \choose k-2} +{n-k-4 \choose k-1} \right)+{n-k-4 \choose k-1}+\left({n-k-4 \choose k-1}+{n-k-4 \choose k} \right) \\ 
=& {n-k-3 \choose k-2}+{n-k-4 \choose k-1}+{n-k-3 \choose k} \\
\leq&\left( {n-k-3 \choose k-2}+{n-k-3 \choose k-1}\right)+\left({n-k-3 \choose k-1}+{n-k-3 \choose k}\right) \\
=& {n-k-2 \choose k-1}+{n-k-2 \choose k}  \\
=& {n-k-1 \choose k}, \\
\end{align*}
 
\noindent Otherwise, suppose $n-k-4 < 1$. As $\gamma_{\times 2}(T) = n-k$ then $\gamma_{\times 2}(T) \leq 4$. By Lemma \ref{lem:DoubleDom2Dom}, $2\gamma(T) \leq \gamma_{\times 2}(T)$. Therefore $\gamma(T) \leq 2$. Furthermore as $T$ has two stems, $\gamma(T) \geq 2$ and therefore $\gamma(T) = 2$. Now $\gamma_{\times 2}(T)+\gamma(T) = n-k + k + 1 = n+1$, so $n \leq 5$. There are exactly three trees with $\gamma(T) = 2$ and $n\leq 5$. They are shown below.

\begin{figure}[!h]
\def\c{0.55}
\centering
\subfigure[$P_4$]{
\scalebox{\c}{
 \begin{tikzpicture}
\begin{scope}[every node/.style={circle,thick,draw}]
    \node (1) at (0,0) {};
    \node (2) at (1,0) {};
    \node (3) at (2,0) {};
    \node (4) at (3,0) {};
\end{scope}

\begin{scope}
    \path [-] (1) edge node {} (2);
    \path [-] (2) edge node {} (3);
    \path [-] (3) edge node {} (4);
\end{scope}    
\end{tikzpicture}}}
\qquad
\subfigure[$T_5$]{
\scalebox{\c}{
 \begin{tikzpicture}
\begin{scope}[every node/.style={circle,thick,draw}]
    \node (1) at (0,0) {};
    \node (2) at (1,0) {};
    \node (3) at (2,0) {};
    \node (4) at (3,0.5) {};
    \node (5) at (3,-0.5) {};
\end{scope}

\begin{scope}
    \path [-] (1) edge node {} (2);
    \path [-] (2) edge node {} (3);
    \path [-] (3) edge node {} (4);
    \path [-] (3) edge node {} (5);
\end{scope}
\end{tikzpicture}}}
\qquad
\subfigure[$P_5$]{
\scalebox{\c}{
 \begin{tikzpicture}
\begin{scope}[every node/.style={circle,thick,draw}]
    \node (1) at (0,0) {};
    \node (2) at (1,0) {};
    \node (3) at (2,0) {};
    \node (4) at (3,0) {};
    \node (5) at (4,0) {};
\end{scope}

\begin{scope}
    \path [-] (1) edge node {} (2);
    \path [-] (2) edge node {} (3);
    \path [-] (3) edge node {} (4);
    \path [-] (4) edge node {} (5);
\end{scope}
\end{tikzpicture}}}
\end{figure}

\noindent However $\gamma_{\times 2}(P_4)=4$, $\gamma(P_4) = 2$ so $\gamma_{\times 2}(P_4)+\gamma(P_4) = 6 \neq n+1$ and $\gamma_{\times 2}(T_5)=5$, $\gamma(T_5) = 2$ so $\gamma_{\times 2}(T_5)+\gamma(T_5) = 7 \neq n+1$, so we can omit these cases. Finally, $D(P_5,x) = x^5+5x^4+8x^3+3x^2$ which satisfies $d_{n-k} \geq d_{k+1}$ for $k+1 \leq \frac{n+1}{2}$.

\end{proof}

\begin{theorem}
If $T$ is a graph with $n$ vertices $\avd(T)\geq \avd(K_{1,n-1})$ with equality if and only if $T \cong K_{1,n-1}$.
\end{theorem}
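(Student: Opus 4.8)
The plan is to compare $\avd(T)$ against the midpoint value $\tfrac{n+1}{2}$ and show the resulting surplus already exceeds $\avd(K_{1,n-1})$. First I would dispose of the easy cases: if $T\cong K_{1,n-1}$ there is nothing to prove, and for $n\le 3$ every tree on $n$ vertices equals $K_{1,n-1}$ (namely $K_1$, $K_2$, $P_3$). So suppose $T\not\cong K_{1,n-1}$, which forces $n\ge 4$; it suffices to establish the \emph{strict} inequality $\avd(T)>\avd(K_{1,n-1})$. Since a tree on $n\ge 2$ vertices possessing a vertex of degree $n-1$ must be $K_{1,n-1}$, our assumption gives $d_1(T)=0$, i.e.\ $T$ has no dominating set of size one.

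Next I would recast Lemma~\ref{lem:treehalfplus1} in terms of $D'(T,1)$ and $N:=D(T,1)=|\mathcal{D}(T)|$. Pairing the index $k$ with $n+1-k$ in $\sum_k\bigl(k-\tfrac{n+1}{2}\bigr)d_k(T)$,
\[
 D'(T,1)-\tfrac{n+1}{2}\,N=\sum_{k=1}^{n}\Bigl(k-\tfrac{n+1}{2}\Bigr)d_k(T)=\sum_{j=1}^{\lfloor n/2\rfloor}\Bigl(\tfrac{n+1}{2}-j\Bigr)\bigl(d_{n+1-j}(T)-d_j(T)\bigr),
\]
where for odd $n$ the central term $k=\tfrac{n+1}{2}$ drops out (its coefficient is $0$). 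For $j\le\lfloor n/2\rfloor$ we have $j\le\tfrac{n+1}{2}$, so Lemma~\ref{lem:treehalfplus1} (with $k=j-1$) yields $d_{n+1-j}(T)\ge d_j(T)$; since also $\tfrac{n+1}{2}-j>0$, every summand is nonnegative. Retaining only the $j=1$ term and using $d_n(T)=1$ and $d_1(T)=0$,
\[
 D'(T,1)-\tfrac{n+1}{2}\,N\ \ge\ \tfrac{n-1}{2}\bigl(d_n(T)-d_1(T)\bigr)=\tfrac{n-1}{2}.
\]

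Finally I would invoke Lemma~\ref{lem:MinDomSetsTrees}: $N=|\mathcal{D}(T)|\le|\mathcal{D}(K_{1,n-1})|=D(K_{1,n-1},1)=2^{n-1}+1$, and hence
\[
 \avd(T)=\frac{D'(T,1)}{N}\ \ge\ \frac{n+1}{2}+\frac{n-1}{2N}\ \ge\ \frac{n+1}{2}+\frac{n-1}{2^{n}+2}.
\]
A short computation from the given formula shows $\avd(K_{1,n-1})=\tfrac{n+1}{2}+\tfrac{n-3}{2^{n}+2}$, and since $n-1>n-3$ the right-hand side of the last display strictly exceeds $\avd(K_{1,n-1})$, which completes the proof. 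One could equally well organize the middle step through Lemma~\ref{lem:AvgPart}, partitioning $\mathcal{D}(T)\setminus\{V(T)\}$ into the blocks $\{S:|S|\in\{j,n+1-j\}\}$ for $j\ge 2$ (each of average order at least $\tfrac{n+1}{2}$) and then folding the set $V(T)$ back in by hand. The one genuinely new observation here — the rest is bookkeeping — is that the $j=1$ pair by itself, namely the single set $V(T)$ weighed against the now-absent size-one dominating sets, yields enough surplus once the total number of dominating sets of a tree is capped at $2^{n-1}+1$; the heavy lifting was already carried out in Lemma~\ref{lem:treehalfplus1} and Theorem~\ref{thm:doubledombound}.
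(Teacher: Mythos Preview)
Your proof is correct and follows essentially the same approach as the paper's: both rest on Lemma~\ref{lem:treehalfplus1} to obtain $D'(T,1)-\tfrac{n+1}{2}\,D(T,1)\ge\tfrac{n-1}{2}$ (which is exactly the paper's inequality $\av(\mathcal{D}^*(T))\ge\tfrac{n+1}{2}$ rewritten), and then on Lemma~\ref{lem:MinDomSetsTrees} to cap $D(T,1)$ by $2^{n-1}+1$. Your direct summation-and-pairing bookkeeping is a bit cleaner than the paper's detour through Lemma~\ref{lem:AvgPart} and the subsequent monotonicity argument, but the substance is identical --- as you yourself note in your closing remark.
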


\begin{proof}
By Lemma \ref{lem:treehalfplus1}, $\avd(T) \geq \frac{n+1}{2}$ and $d_{n-k} \geq d_{k+1}$ for $k+1 \leq \frac{n+1}{2}$. Suppose $T \not\cong K_{1,n-1}$, so $\gamma(T) \geq 2$ and $d(T,1)=0$. Now consider the mean domination order of all dominating sets except for the dominating set $V(T)$. Let $\mathcal{D}^{\ast}(T) = \mathcal{D}(T)-\{V(T)\}$. Note that

$$\av(\mathcal{D}^{\ast}(T)) = \frac{D'(T,1)-n}{D(T,1)-1}$$

\noindent For $k+1 \leq \frac{n+1}{2}$ let $\mathcal{B}_k  = \mathcal{D}_{n-k}(T) \cup \mathcal{D}_{k+1}(T)$. Note that $\av(\mathcal{B}_k) \geq \frac{n+1}{2}$ as $d_{n-k} \geq d_{k+1}$. Furthermore  $\mathcal{B}_2, \ldots, \mathcal{B}_{\frac{n+1}{2}}$ is a partition of $\mathcal{D}^{\ast}(T)$. It follows from Lemma \ref{lem:AvgPart} that 

$$\frac{D'(T,1)-n}{D(T,1)-1} = \av(\mathcal{D}^{\ast}(T))  \geq \frac{n+1}{2}.$$

\noindent By Lemma \ref{lem:MinDomSetsTrees}, $D(T,1) \leq D(K_{1,n-1},1)$ and

\begin{align*}
\avd(T) &=  \frac{n+D'(T,1)-n}{D(T,1)} \\
       &=  \frac{n}{D(T,1)}+\left(\frac{D(T,1)-1}{D(T,1)}\right)\frac{D'(T,1)-n}{D(T,1)-1}  \\
       &\geq  \frac{n}{D(T,1)}+\left(\frac{D(T,1)-1}{D(T,1)}\right)\frac{n+1}{2}  \\
       &\geq  \frac{n}{D(K_{1,n-1},1)}+\left(\frac{D(K_{1,n-1},1)-1}{D(K_{1,n-1},1)}\right)\frac{n+1}{2}  \\
       &>  \frac{n-1}{D(K_{1,n-1},1)}+\left(\frac{D(K_{1,n-1},1)-1}{D(K_{1,n-1},1)}\right)\frac{n+1}{2}  \\
       &= \frac{n-1}{2^{n-1}+1}+\left(\frac{2^{n-1}}{2^{n-1}+1}\right)\frac{n+1}{2}  \\
       &= \frac{n-1+2^{n-2}(n+1)}{2^{n-1}+1}=   \avd(K_{1,n-1})
\end{align*}

\end{proof}

\section{Distribution of Average Order of Dominating Sets}\label{sec:denserandom}

What are the possible values for $\avd(G)$? If $G$ is a graph of order $n$, we showed in the previous section $\avd(G) \in (\frac{n}{2},n]$, but it seems unlikely that one can say precisely what values in the interval are average orders of dominating sets. A natural variant of $\avd(G)$ is $\navd(G) = \frac{\avd(G)}{n}$ which we shall refer to as the \emph{normalized average order of dominating sets in $G$}. (Similar kinds of normalized graph parameters have been investigated throughout the literature -- for example, \cite{1983Jamison,2010Vince, 2014Haslegrave}.) 

We start with some examples. We say a graph contains a \emph{simple $k$-path} if there exists $k$ vertices of degree two which induce a path in $G$. Two families of graphs which contains simple $k$-paths are paths $P_n$ and cycles $C_n$ (where $k = n-2$ and $n-1$, respectively). The following holds for graphs which contain simple 3-paths.

\begin{theorem}
\label{thm:DomPolyIP}
\textnormal{\cite{2012Recurr}} Suppose $G$ is a graph with vertices $u, v, w$ which form a simple 3-path. Then 
$$D(G,x)=x(D(G/u,x) + D(G/u/v,x) + D(G/u/v/w,x))$$
\noindent where $G/u$ is the graph formed by joining every neighbour of $u$ and then deleting $u$.
\QED
\end{theorem}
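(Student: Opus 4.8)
The plan is to prove the identity by partitioning $\mathcal{D}(G)$ according to the trace of a dominating set on the three path-vertices, and to match each block of the partition with a piece of the right-hand side after unwinding the three contracted graphs. Orient the simple $3$-path as $u-v-w$ and let $u'$ be the neighbour of $u$ other than $v$ and $w'$ the neighbour of $w$ other than $v$, so that $N[u]=\{u',u,v\}$, $N[v]=\{u,v,w\}$, $N[w]=\{v,w,w'\}$ and every edge of $G$ lies inside $V':=V(G)\setminus\{u,v,w\}$ except for $uu',uv,vw,ww'$ (the degenerate case $u'=w'$, where the final contraction adds no edge, will have to be tracked but causes only coincidences among the quantities below). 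Note $V'=V(G/u/v/w)$, $V'\cup\{w\}=V(G/u/v)$, and $V'\cup\{v,w\}=V(G/u)$. Every $S\in\mathcal{D}(G)$ is determined by $S\cap\{u,v,w\}$ together with $T:=S\cap V'$; since $v$ must be dominated, $S\cap\{u,v,w\}$ ranges over the seven nonempty subsets of $\{u,v,w\}$.

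For $W\subseteq\{u',w'\}$ write $P_W(x)=\sum_T x^{|T|}$, the sum over $T\subseteq V'$ that dominate, inside $G[V']$, every vertex of $V'\setminus W$ (so $P_\emptyset=D(G[V'],x)$), and let $P^{u'}_W,P^{w'}_W,P^{u'w'}_W$ denote the same sums with the extra requirement that $u'\in T$, resp.\ $w'\in T$, resp.\ both. The first step is the local observation that, once $S\cap\{u,v,w\}$ is fixed, the condition ``$S\in\mathcal{D}(G)$'' splits into (i) forcing $u'$ and/or $w'$ into $T$ exactly when they are not already dominated from within $\{u,v,w\}$, and (ii) requiring $T$ to dominate, inside $G[V']$, the remaining vertices of $V'$. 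Working this out case by case gives $D(G,x)$ as an explicit combination of the $P$'s: the blocks $\{u,v,w\}$, $\{u,w\}$, $\{u,v\}$, $\{v,w\}$, $\{v\}$, $\{u\}$, $\{w\}$ contribute $x^{3}P_{\{u',w'\}}$, $x^{2}P_{\{u',w'\}}$, $x^{2}P_{\{u'\}}$, $x^{2}P_{\{w'\}}$, $xP_{\emptyset}$, $xP^{w'}_{\{u'\}}$, $xP^{u'}_{\{w'\}}$ respectively.

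Next I would expand the three terms on the right by conditioning in the same fashion. In $G/u$ the vertex $v$ has degree $2$ with neighbours $u'$ and $w$, so conditioning on $v$ and then on $w$ writes $D(G/u,x)$ as a combination of the $P_W$'s (and $P^{u'w'}_\emptyset$); in $G/u/v$ the vertex $w$ now has neighbours $u'$ and $w'$, and conditioning on $w$ writes $D(G/u/v,x)$ similarly; and $D(G/u/v/w,x)$ is a short inclusion–exclusion in the $P$'s, because the only change from $G[V']$ is the extra edge $u'w'$, which lets $u'$ and $w'$ cover each other. Substituting these expansions into $x\big(D(G/u,x)+D(G/u/v,x)+D(G/u/v/w,x)\big)$ and cancelling, the $P^{u'}_\emptyset$, $P^{w'}_\emptyset$ and $P^{u'w'}_\emptyset$ terms disappear and what survives is exactly the combination produced for $D(G,x)$ in the previous paragraph. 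Since this reconciliation is an identity among the $P$'s that does not depend on $G$, the theorem follows.

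The only real difficulty here is bookkeeping: keeping straight the handful of partial-domination generating functions (indexed by which of $u',w'$ are already covered, and which are forced into $T$), using the small identities $P^{w'}_{\{u'\}}=P^{w'}_{\{u',w'\}}$, $P^{u'}_\emptyset=P^{u'}_{\{u'\}}$ and the like, and verifying the final cancellation carefully. A second point needing attention is the degenerate configurations ($u'=w'$, or $u'$ adjacent to $w'$ in $G$, or $G$ very small), where some $P_W$ coincide or collapse; these should be checked directly, and consistency can be spot-checked on $P_5$ (the coefficients $x^5+5x^4+8x^3+3x^2$ above) and on $C_4$. One could instead phrase step three as explicit cardinality-increasing bijections from $\mathcal{D}(G/u),\mathcal{D}(G/u/v),\mathcal{D}(G/u/v/w)$ onto the blocks of a partition of $\mathcal{D}(G)$, which is more elegant but requires pinning down the right partition.
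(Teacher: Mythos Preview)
The paper does not prove this statement; it is quoted from \cite{2012Recurr} without argument, so there is no in-paper proof to compare against. Your approach is nonetheless correct. Partitioning $\mathcal{D}(G)$ by the trace $S\cap\{u,v,w\}$, writing each of the seven blocks via the partial-domination generating functions $P_W$ on $V'$, and performing the analogous conditioning inside each of the three contracted graphs does yield the identity once the bookkeeping is completed: the terms carrying the explicit prefactors $x^3$ and $x^2$ match directly between the two sides, and equality at the remaining $x^1$ level is equivalent to
\[
D(G/u/v/w,x)=P_\emptyset+\bigl(P^{w'}_{\{u'\}}-P^{w'}_\emptyset\bigr)+\bigl(P^{u'}_{\{w'\}}-P^{u'}_\emptyset\bigr),
\]
which says precisely that a dominating set of $G[V']+u'w'$ either already dominates $G[V']$, or fails there only at $u'$ (forcing $w'\in T$), or only at $w'$ (forcing $u'\in T$). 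A pleasant byproduct is that the degenerate configurations you flag handle themselves: if $u'=w'$ or $u'w'\in E(G)$ then both bracketed differences above vanish and $G/u/v/w=G[V']$, so the displayed identity reduces to $D(G[V'],x)=P_\emptyset$ and no separate case analysis is actually required.
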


There is no known closed formula for all coefficients of $D(P_k,x)$ and $D(C_k,x)$ respectively. This makes determining mean dominating order of paths and cycles difficult. We will now show that for a family of graphs satisfying a recurrence relation similar to that in Theorem~\ref{thm:DomPolyIP}, we can calculate the limit of the normalized average order of dominating sets as $n \rightarrow \infty$. First we shall put forward a way to calculate the limits of average values of functions of a certain type (which include those that arise  from solving linear polynomial recurrences); the proof is straightforward and omitted.

\begin{theorem}
\label{thm:MDORecurr}
Suppose functions $f_n(x)$ satisfy 

$$f_n(x) = \alpha_1(x) (\lambda_1(x))^n + \alpha_2(x) (\lambda_2(x))^n + \cdots + \alpha_k(x) (\lambda_k(x))^n$$

where $\alpha_i(x)$ and  $\lambda_i(x)$ are fixed non-zero analytic functions, such that $|\lambda_1(1)|>|\lambda_i(1)|$ for all $i > 1$. Then

$$\lim_{n \rightarrow \infty} \frac{f'_n(1)}{nf_n(1)}=\frac{\lambda_1'(1)}{\lambda_1(1)}.$$
\QED
\end{theorem}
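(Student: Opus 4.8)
The plan is to work directly from the given closed form for $f_n(x)$, differentiate it, and then read off the asymptotics by factoring out the fastest-growing term $\lambda_1(1)^n$. Since each $\alpha_i$ and $\lambda_i$ is analytic (in fact differentiability at $x=1$ is all that is needed), the product rule gives
$$f_n'(x) = \sum_{i=1}^k \alpha_i'(x)\,\lambda_i(x)^n \;+\; n\sum_{i=1}^k \alpha_i(x)\,\lambda_i(x)^{n-1}\lambda_i'(x).$$
Evaluating at $x=1$ and abbreviating $a_i=\alpha_i(1)$, $b_i=\alpha_i'(1)$, $\ell_i=\lambda_i(1)$, $m_i=\lambda_i'(1)$, we get $f_n(1)=\sum_i a_i\ell_i^{\,n}$ and $f_n'(1)=\sum_i b_i\ell_i^{\,n}+n\sum_i a_i m_i \ell_i^{\,n-1}$.

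Next I would divide the numerator and denominator of $f_n'(1)/(nf_n(1))$ by $\ell_1^{\,n}$ and set $r_i=\ell_i/\ell_1$, noting that $r_1=1$ and $|r_i|<1$ for $i>1$ by the strict dominance hypothesis $|\lambda_1(1)|>|\lambda_i(1)|$ (which in particular forces $\ell_1\neq 0$, so $r_i$ is well defined). This rearranges the quantity of interest as
$$\frac{f_n'(1)}{nf_n(1)} \;=\; \frac{1}{n}\cdot\frac{\sum_i b_i r_i^{\,n}}{\sum_i a_i r_i^{\,n}} \;+\; \frac{1}{\ell_1}\cdot\frac{\sum_i a_i m_i r_i^{\,n-1}}{\sum_i a_i r_i^{\,n}}.$$
Each of the sums $\sum_i b_i r_i^{\,n}$, $\sum_i a_i m_i r_i^{\,n-1}$, and $\sum_i a_i r_i^{\,n}$ converges as $n\to\infty$: the $i=1$ term is constant and every $i>1$ term decays geometrically. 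Hence these sums tend to $b_1$, $a_1 m_1$, and $a_1$ respectively; here one uses that $a_1=\alpha_1(1)\neq 0$ (the intended meaning of the dominant factor being ``non-zero''), so the denominators stay bounded away from $0$.

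Taking limits, the first term is $\tfrac1n$ times a quantity converging to $b_1/a_1$ and so tends to $0$, while the second term tends to $\tfrac{1}{\ell_1}\cdot\tfrac{a_1 m_1}{a_1}=\tfrac{\lambda_1'(1)}{\lambda_1(1)}$; adding them gives $\lim_{n\to\infty} f_n'(1)/(nf_n(1)) = \lambda_1'(1)/\lambda_1(1)$. As the paper already remarks, there is no genuine obstacle here: the argument is routine bookkeeping. The only two points meriting a word of care are that the extra factor $n$ produced when differentiating $\lambda_i(x)^n$ is precisely what makes the first term vanish in the limit, and that the hypotheses used are weaker than stated — differentiability of the $\alpha_i,\lambda_i$ at $1$, together with $\alpha_1(1)\neq 0$ and the strict modulus inequality, suffice.
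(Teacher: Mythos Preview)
Your argument is correct and is precisely the ``straightforward'' computation the paper alludes to when it omits the proof: differentiate the closed form, factor out the dominant term $\lambda_1(1)^n$, and let the subdominant ratios $r_i^n$ die off. Your remark that what is really needed is $\alpha_1(1)\neq 0$ (rather than merely $\alpha_1$ not identically zero) is a fair clarification of the hypothesis.
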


%

\begin{theorem}
$\lim\limits_{n \rightarrow \infty} \navd(P_n)=\lim\limits_{n \rightarrow \infty} \navd(C_n) \approx 0.618419922.$
\end{theorem}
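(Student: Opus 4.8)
The plan is to apply Theorem~\ref{thm:MDORecurr} to the domination polynomials $D(P_n,x)$ and $D(C_n,x)$, so the bulk of the work is extracting from Theorem~\ref{thm:DomPolyIP} an explicit linear recurrence with polynomial coefficients that these polynomials satisfy, solving its characteristic equation, and identifying the dominant root at $x=1$.

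First I would specialize the simple $3$-path recurrence. In $P_n$ the three endpoints (or any three consecutive interior vertices of degree two) form a simple $3$-path, and contracting $u$, then $v$, then $w$ from the end of the path simply produces shorter paths: $P_n/u \cong P_{n-1}$, $P_n/u/v \cong P_{n-2}$, and $P_n/u/v/w \cong P_{n-3}$ (one must check the contraction of a degree-two endpoint really does shorten the path, which is immediate since joining the unique neighbour of $u$ to nothing and deleting $u$ just removes the leaf). Hence
\[
D(P_n,x) = x\bigl(D(P_{n-1},x) + D(P_{n-2},x) + D(P_{n-3},x)\bigr),
\]
a linear recurrence of order $3$ with constant-in-$n$ (polynomial-in-$x$) coefficients. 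The same identity holds for $C_n$ using three consecutive vertices on the cycle — contracting them walks $C_n$ down through $C_{n-1}, C_{n-2}, C_{n-3}$ — so $C_n$ satisfies the identical recurrence, only with different initial conditions. By standard linear-recurrence theory, both $D(P_n,x)$ and $D(C_n,x)$ can be written in the form $\sum_{i=1}^3 \alpha_i(x)\lambda_i(x)^n$ where $\lambda_1,\lambda_2,\lambda_3$ are the roots of the characteristic polynomial
\[
\lambda^3 = x\lambda^2 + x\lambda + x, \qquad\text{i.e.}\qquad \lambda^3 - x\lambda^2 - x\lambda - x = 0,
\]
and the $\alpha_i(x)$ are determined (as analytic, generically nonzero functions of $x$) by the initial conditions; the roots are analytic functions of $x$ away from the discriminant locus, in particular near $x=1$.

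Next I would analyze the characteristic equation at $x=1$: $\lambda^3 - \lambda^2 - \lambda - 1 = 0$. This is the "tribonacci" polynomial; it has one real root $\lambda_1(1) \approx 1.839286755\ldots$ and a complex-conjugate pair of modulus strictly less than $1$ (their product of moduli equals $1/\lambda_1(1) < 1$, and they are genuinely complex since the discriminant is negative). Thus $|\lambda_1(1)| > |\lambda_i(1)|$ for $i = 2,3$, so Theorem~\ref{thm:MDORecurr} applies to both families, giving
\[
\lim_{n\to\infty}\navd(P_n) = \lim_{n\to\infty}\frac{D'(P_n,1)}{nD(P_n,1)} = \frac{\lambda_1'(1)}{\lambda_1(1)} = \lim_{n\to\infty}\navd(C_n).
\]
Finally, to get the numerical value I would differentiate the defining relation $\lambda_1(x)^3 - x\lambda_1(x)^2 - x\lambda_1(x) - x = 0$ implicitly at $x=1$: writing $\lambda = \lambda_1(1)$ and $\lambda' = \lambda_1'(1)$, this yields $3\lambda^2\lambda' - \lambda^2 - 2x\lambda\lambda' - \lambda - \lambda' - 1 = 0$ at $x=1$, so $\lambda' = (\lambda^2 + \lambda + 1)/(3\lambda^2 - 2\lambda - 1)$, and hence $\lambda'/\lambda = (\lambda^2+\lambda+1)/\bigl(\lambda(3\lambda^2-2\lambda-1)\bigr)$. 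Substituting $\lambda \approx 1.839286755$ gives the stated value $\approx 0.618419922$.

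The only genuine obstacle is verifying the side conditions of Theorem~\ref{thm:MDORecurr}: that the leading coefficients $\alpha_1(x)$ for the $P_n$ and $C_n$ expansions are nonzero near $x=1$ (so the dominant term is actually present), and that the dominant root $\lambda_1$ is simple at $x=1$ so that it extends to an analytic branch — both follow from elementary facts (the discriminant of the tribonacci polynomial is nonzero, and $\alpha_1(1) \neq 0$ can be read off from the nonnegativity and growth of the coefficient sequences $D(P_n,1), D(C_n,1)$, which count dominating sets and are certainly not dominated by the lower-order complex terms). Everything else is routine characteristic-root bookkeeping and a single implicit differentiation.
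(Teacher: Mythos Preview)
Your proposal is correct and follows essentially the same route as the paper: extract the order-$3$ linear recurrence from Theorem~\ref{thm:DomPolyIP}, identify the dominant characteristic root of $\lambda^3 - x\lambda^2 - x\lambda - x = 0$ at $x=1$, and invoke Theorem~\ref{thm:MDORecurr}. The only difference is that the paper solves the cubic explicitly via Cardano and differentiates the resulting radical expression to obtain a closed form for $\lambda_1'(1)/\lambda_1(1)$, whereas your implicit differentiation of the characteristic relation is cleaner (and your explicit check of the $\alpha_1\neq 0$ and simple-root hypotheses is something the paper leaves implicit).
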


\begin{proof}
For both paths and cycles, we have a sequence of graphs $(G_n)_{n \geq 1}$ which satisfy 
$$D(G_n,x)=x(D(G_{n-1},x) + D(G_{n-2},x) + D(G_{n-3},x))$$
As $G_n$ follows the homogeneous linear recursive relation $D(G_n,x)=x(D(G_{n-1},x) + D(G_{n-2},x) + D(G_{n-3},x))$, then $D(G_n,x) = \alpha_1(x) \lambda_1(x)^n + \alpha_2(x) \lambda_2(x)^n + \alpha_3(x) \lambda_3(x)^n$ where each $\lambda_1(x)$ satisfies

$$\lambda_i(x)^3-x\lambda_i(x)^2-x\lambda_i(x)-x=0.$$

\noindent We solve this cubic polynomial (see also \cite{P4free}). The solutions are
\begin{align*}
\lambda_1(x) =& \frac{x}{3}+p(x)+q(x), \\
\lambda_2(x) =& \frac{x}{3}-p(x)-q(x) + \frac{\sqrt{3}}{2}\left( p(x)-q(x) \right)i, \\
\lambda_3(x) =& \frac{x}{3}-p(x)-q(x) - \frac{\sqrt{3}}{2}\left( p(x)-q(x) \right)i,
\end{align*}
where
\begin{align*}
p(x) =& \sqrt[3]{\frac{x^3}{27}+\frac{x^2}{6}+\frac{x}{2}+\sqrt{\frac{x^4}{36}+\frac{7x^3}{54}+\frac{x^2}{4}}} \\
q(x) =& \sqrt[3]{\frac{x^3}{27}+\frac{x^2}{6}+\frac{x}{2}-\sqrt{\frac{x^4}{36}+\frac{7x^3}{54}+\frac{x^2}{4}}}
\end{align*}

Note $|\lambda_1(1)| \approx 1.83929 > |\lambda_2(1)|=|\lambda_3(1)| \approx 0.73735$. 

Therefore by Theorem \ref{thm:MDORecurr}, $\lim\limits_{n \rightarrow \infty} \frac{\avd(G_n)}{n}=\frac{\lambda_1'(1)}{\lambda_1(1)}$. 
It follows that  
\vspace{-7mm} 
 
\begin{align*}
\lim\limits_{n \rightarrow \infty} \frac{\avd(G_n)}{n} &=\frac{\lambda_1'(1)}{\lambda_1(1)} \\
&= \frac{\frac{1}{3}+p'(1)+q'(1)}{\frac{1}{3}+p(1)+q(1)} \\
&= \frac{\frac{1}{3}+\frac{27\sqrt{33}+187}{66(19+3\sqrt{33})^\frac{2}{3}}-\frac{27\sqrt{33}-187}{66(19-3\sqrt{33})^\frac{2}{3}}}{\frac{1}{3}+\frac{(19+3\sqrt{33})^\frac{1}{3}}{3}+\frac{(19-3\sqrt{33})^\frac{1}{3}}{3}} \\
&=\frac{1}{3}+\frac{(88-8\sqrt{33})(19+3\sqrt{33})^{\frac{1}{3}}}{1056}+\frac{(55-7\sqrt{33})(19+3\sqrt{33})^{\frac{2}{3}}}{1056},
\end{align*}
which we will denote by $r$.
 By Theorem \ref{thm:DomPolyIP}, both $C_n$ and $P_n$ satisfy the same recurrence as $G_n$ and hence $\lim\limits_{n \rightarrow \infty} \frac{\avd(P_n)}{n}=\lim\limits_{n \rightarrow \infty} \frac{\avd(C_n)}{n}=r \approx 0.618419922$.
\end{proof}

For all graphs of order 9 we counted the number of graphs with $\navd(G) \in [\frac{1}{2}+\frac{k}{20n},\frac{1}{2}+\frac{k+1}{20n})$ for each integer $0 \leq k \leq 10n-1$. Figure~\ref{fig:MDO9dist} shows the linearly interpolated distribution of $\navd(G)$ for all graphs of order 9. The distribution appears to be skewed towards $\frac{1}{2}$. However, our next result shows $\navd(G)$ can be arbitrarily closed to any value in $\left[ \frac{1}{2},1 \right]$. 

\begin{center}
\begin{figure}[!h]
\centering
\includegraphics[scale=0.35]{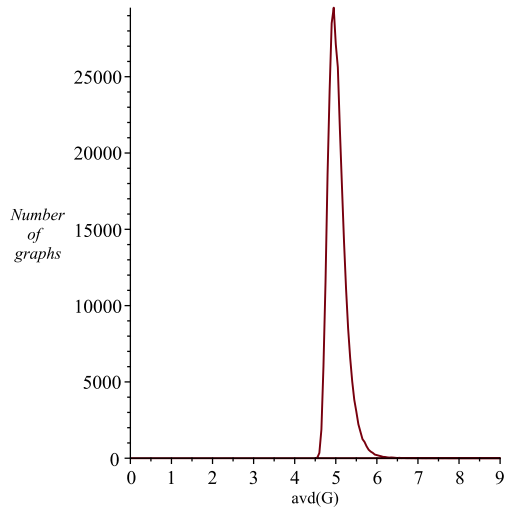}
\caption{Distribution of $\avd(G)$ for all graphs of order 9}%
\label{fig:MDO9dist}%
\end{figure}
\end{center}

\begin{proposition}
The set $\left\{\navd(G) : G \text{ is a graph}\right\}$ is dense in $[\frac{1}{2},1]$.
\end{proposition}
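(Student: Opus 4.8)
The plan is to exhibit, for each target value $t \in [\frac{1}{2},1]$, an explicit sequence of graphs whose normalized average order of dominating sets converges to $t$. The two anchors we already have are: the complete graph $K_n$, for which $\navd(K_n) = \frac{2^{n-1}}{2^n-1} \to \frac{1}{2}$; and the star $K_{1,n-1}$, for which $\navd(K_{1,n-1}) = \frac{(n+1)2^{n-2}+n-1}{n(2^{n-1}+1)} \to \frac{1}{2}$ as well — so stars are not enough on their own. Better high-value anchors are the disjoint-union constructions of the previous section: by Lemma~\ref{lem:disjointunion}, $\avd(aK_1 \cup bK_2) = a + \frac{4b}{3}$ on $n = a + 2b$ vertices, giving $\navd = \frac{a + 4b/3}{a+2b}$, which ranges over (a dense subset of) $[\frac{2}{3},1]$ as the ratio $a/b$ varies, and tends to $1$ when $b$ is a fixed constant and $a \to \infty$. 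So the isolated vertex is the natural "pull towards $1$" gadget and $K_n$ (or any dense graph) is the "pull towards $\frac{1}{2}$" gadget.

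First I would make the target interval $[\frac{1}{2},1]$ by interpolating between these two behaviours. The cleanest approach: given $t \in (\frac{1}{2},1)$ and a tolerance $\varepsilon$, take a graph $H_m$ with $m$ vertices and $\navd(H_m)$ very close to $\frac12$ (e.g. $H_m = K_m$, for which $\avd(K_m) = \frac{m}{2} + O(m2^{-m})$), together with $r$ isolated vertices, forming $G = H_m \cup \overline{K_r}$ on $n = m + r$ vertices. Then by Lemma~\ref{lem:disjointunion},
\[
\navd(G) = \frac{\avd(H_m) + r}{m+r} = \frac{r + \frac{m}{2} + O(m2^{-m})}{m+r}.
\]
Ignoring the exponentially small error, this equals $\frac{r + m/2}{m+r} = 1 - \frac{m/2}{m+r}$, which as $\frac{m}{r}$ ranges over the positive rationals sweeps out $(\frac12, 1)$; choosing $m,r$ with $\frac{r}{m}$ a good rational approximation of $\frac{2t-1}{2(1-t)}$ and then $m$ large enough to kill the error term gives $\navd(G)$ within $\varepsilon$ of $t$. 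The endpoints are handled separately: $t = \frac12$ is the limit of $\navd(K_n)$, and $t = 1$ is the limit of $\navd(\overline{K_n})$ (or, if one insists on connected/isolate-free examples elsewhere, of $\navd(K_{1,1} \cup \overline{K_r})$ as $r \to \infty$), and in fact $\navd(\overline{K_n}) = 1$ exactly.

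The routine steps are: (i) verify the exact/asymptotic formula for $\avd(K_m)$ from $D(K_m,x) = (1+x)^m - 1$ via equation~(\ref{eqn:avdG}); (ii) apply Lemma~\ref{lem:disjointunion} additively to get $\avd(G) = \avd(H_m) + r \cdot \avd(K_1) = \avd(H_m) + r$; (iii) a density argument showing that $\{1 - \frac{m/2}{m+r} : m, r \in \mathbb{Z}^+\}$ is dense in $[\frac12, 1]$, which is elementary since it contains $1 - \frac{1}{2(1+r/m)}$ and $r/m$ ranges densely over $(0,\infty)$; and (iv) an $\varepsilon/2$ split controlling the exponentially small correction term against the rational-approximation error. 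I do not anticipate a genuine obstacle here — the only thing to be careful about is that the error term $O(m2^{-m})$ in $\avd(K_m)$ must be made small relative to the chosen $\varepsilon$ \emph{after} $m$ is pinned down by the rational approximation, so one should fix the approximating ratio first and only then send $m \to \infty$ along multiples of that ratio; alternatively, replace $K_m$ by a graph whose domination polynomial makes $\avd$ exactly $\frac{m}{2}$ (none exists, since $\avd(G) > \frac{n}{2}$ strictly by Theorem~\ref{cor:avdlowerbound}), so the limiting argument is unavoidable but harmless.
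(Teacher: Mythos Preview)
Your approach is essentially the same as the paper's: mix complete graphs (which anchor $\navd$ near $\tfrac12$) with isolated vertices (which anchor at $1$) and use the additivity of $\avd$ over disjoint unions to interpolate. The only cosmetic difference is that the paper takes $G_k = (2b-2a)K_k \cup (2a-b)\overline{K_k}$, i.e.\ several disjoint copies of $K_k$ rather than one large $K_m$, which lets the limit hit each rational $\tfrac{a}{b}$ exactly and so avoids your separate rational-approximation and $\varepsilon/2$ bookkeeping; but the underlying idea is identical and your version is correct.
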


\begin{proof}
It suffices to show for every rational number $\frac{a}{b} \in [0.5,1]$ ($a$ and $b$ positive) there exists a sequence of graphs $(G_k)_{k \geq 1}$, with orders $n_k$ respectively, such that $\lim_{k \rightarrow \infty} n_{k} = \infty$ and $\lim_{k \rightarrow \infty}\frac{\avd(G_k)}{n_k} = \frac{a}{b}$. Let $G_k = (2b-2a)K_k \cup (2a-b)\overline{K_k}$; $G_k$ has order $(2b-2a)k + (2a-b)k = bk$. Note such a graph exists as $\frac{a}{b} \in [0.5,1]$ and hence $a \leq b \leq 2a$. Recall $D(K_k,x)=(1+x)^k-1$ and $D(\overline{K_k},x)=x^k$. Therefore 

$$\lim_{i \rightarrow \infty} \frac{\avd(K_k)}{k}=\lim_{k \rightarrow \infty} \frac{k2^{k-1}}{k(2^k-1)}= 0.5 \hspace{2mm}\text{ and }\hspace{2mm}\lim_{k \rightarrow \infty} \frac{\avd(\overline{K_k})}{k}=1. $$

\noindent Therefore by Lemma \ref{lem:disjointunion}

\begin{align*}
\lim_{k \rightarrow \infty} \frac{\avd(G_k)}{bk} & = \lim_{k \rightarrow \infty} \frac{(2b-2a)\avd(K_k) + (2a-b)\avd(\overline{K_k})}{bk} \\
&= \lim_{i \rightarrow \infty} \frac{(2b-2a)\avd(K_k)}{bk} +\lim_{k \rightarrow \infty} \frac{(2a-b)\avd(\overline{K_k})}{bk}\\
&= \frac{(2b-2a) \cdot 0.5}{b} +\frac{2a-b}{b} = \frac{a}{b}
\end{align*}
\end{proof}

While we have shown that the closure of the normalized average order of dominating sets fills the interval $[1/2,1]$, where do most values lie? Let ${\mathcal G}(n,p)$ denote the sample space of random graphs on $n$ vertices (each edge exists is independent present with probability $p$). We will now show with probability tending to $1$, the normallized average order of dominating sets of a random graph approaches $\frac{1}{2}$ (even if the graph is sparse with $p$ close to $0$); this explains the ``bundling up'' of values near $n/2$ in Figure~\ref{fig:MDO9dist}. 

\begin{theorem}
\label{thm:almostallgotohalf}
Let $G_{n} \in {\mathcal G}(n,p)$. Then 
\[ \lim\limits_{n\rightarrow \infty} \navd(G_n)=\frac{1}{2}.\]
\end{theorem}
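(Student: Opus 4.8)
The plan is to show that $\avd(G_n)/n \to 1/2$ in probability (or almost surely) by sandwiching. The lower bound is already free: Theorem~\ref{cor:avdlowerbound} gives $\avd(G_n) > n/2$ for every graph on $n$ vertices, so $\navd(G_n) > 1/2$ deterministically. All the work is in the matching upper bound, and for that I would invoke Corollary~\ref{cor:upperbound}: if $\delta(G_n) \geq 2\log_2(n)$ then $\avd(G_n) \leq (n+1)/2$, hence $\navd(G_n) \leq (n+1)/(2n) \to 1/2$. So the theorem reduces entirely to the statement that a random graph $G_n \in \mathcal{G}(n,p)$ has minimum degree at least $2\log_2 n$ with probability tending to $1$.

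The main step is therefore a standard concentration/union-bound argument on vertex degrees. For a fixed vertex $v$, $\deg(v) \sim \mathrm{Bin}(n-1,p)$ with mean $(n-1)p$. I would split into cases according to how $p = p(n)$ behaves. If $p$ is bounded below by a constant (or more generally if $(n-1)p \gg \log n$, say $np \geq C\log n$ for large $C$), then a Chernoff bound gives $\Pr[\deg(v) < 2\log_2 n] \leq \Pr[\deg(v) < \tfrac12 (n-1)p] \leq e^{-c np}$ for some $c>0$, and a union bound over the $n$ vertices gives failure probability at most $n e^{-cnp} \to 0$. The delicate regime is small $p$: if $p$ is too small then $G_n$ genuinely has isolated (or low-degree) vertices and the conclusion would fail — so the statement as written must implicitly assume $p$ is a constant (or at least $p \geq (2+\varepsilon)\log n / n$ or similar), and I would state that hypothesis explicitly. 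Under "$p$ constant" everything goes through cleanly: $np$ is linear in $n$, so $ne^{-cnp} \to 0$ exponentially fast, giving $\Pr[\delta(G_n) \geq 2\log_2 n] \to 1$.

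Putting it together: with probability $\to 1$ we have $2\log_2 n \leq \delta(G_n) \leq n-1$ (the upper inequality always holds and is what Corollary~\ref{cor:upperbound} needs), so on that event $\frac12 < \navd(G_n) \leq \frac{n+1}{2n}$, and letting $n\to\infty$ forces $\navd(G_n) \to \tfrac12$. If one wants almost-sure convergence rather than convergence in probability, note $\sum_n n e^{-cnp} < \infty$ for constant $p$, so Borel--Cantelli upgrades the conclusion; otherwise convergence in probability is what the clean union bound delivers.

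The only real obstacle is bookkeeping around the range of $p$: the honest version of this theorem needs $p$ to be at least of order $\log n / n$ (with the right constant) so that minimum degree exceeds $2\log_2 n$ whp; for $p$ constant this is immediate, and I expect the paper intends the constant-$p$ (or at worst $p \to 0$ not too fast) case, in which the Chernoff-plus-union-bound step is entirely routine and the rest is just plugging into Corollary~\ref{cor:upperbound} and Theorem~\ref{cor:avdlowerbound}.
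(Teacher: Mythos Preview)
Your proposal is correct and follows essentially the same route as the paper: the deterministic lower bound from Theorem~\ref{cor:avdlowerbound}, the upper bound $\avd \leq (n+1)/2$ from Corollary~\ref{cor:upperbound} once $\delta \geq 2\log_2 n$, and a tail bound (the paper uses Hoeffding, you use Chernoff) plus a union bound to force the minimum-degree event whp. Your discussion of the admissible range of $p$ and of convergence in probability versus almost-sure convergence via Borel--Cantelli is in fact more careful than the paper, which tacitly treats $p$ as a fixed constant.
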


\begin{proof}
It follows from Theorem~\ref{cor:avdlowerbound} that $\navd(G_n) \geq \frac{1}{2}$. Therefore it is sufficient to show $\lim\limits_{n\rightarrow \infty} \navd(G_n) \leq \frac{1}{2}$. 

The degree of any vertex $v$ of $G_n$ has a binomial distribution $X_v$ with $N = n-1$, and hence has mean $p(n-1)$. From Hoeffding's well known bound on the tail of a binomial distribution, it follows that for any fixed $\varepsilon > 0$,
\[ \mbox{Prob}\left( X_v \leq (p-\varepsilon)(n-1) \right) \leq e^{-2\varepsilon^2(n-1)}.\]
Thus 
\[ \mbox{Prob}\left( \cup_{v} X_v \leq (p-\varepsilon)(n-1) \right) \leq ne^{-2\varepsilon^2(n-1)} \rightarrow 0.\]
It follows that $\delta(G_n) > (p-\varepsilon)(n-1) > 2 \log_2(n)$ with probability tending to $1$.
By Corollary \ref{cor:upperbound}, if $\delta(G_n) \geq 2 \log_2(n)$ then $\avd(G_n) \leq \frac{n+1}{2}$. Therefore with probability tending to $1$,
$$\lim\limits_{n\rightarrow \infty} \navd(G_n) \leq \lim\limits_{n\rightarrow \infty} \frac{n+1}{2n}= \frac{1}{2},$$
and we are done.
\end{proof}

\section{Conclusion and Open Problems}\label{sec:conclusion}

The most salient open problem is that in Conjecture~\ref{conj:2n/3}, namely, a tight upper bound on the average order of dominating sets among all connected graphs of order $n$. As mentioned previously, any graph which contains a perfect matching is quasi-regularizable. Let $\nu(G)$ denote the matching number of $G$, that is, the largest cardinality of a matching. We alter the proof of the previous theorem to put $\avd(G)$ in terms of $\nu(G)$. This will not improve the bound from Theorem~\ref{thm:boundquasi} for graphs with perfect matchings. However there are graphs which contain near perfect matchings which are not quasi-regularizable and therefore not subject to the bound in Theorem \ref{thm:boundquasi}, for example paths of odd order. However, we can get an upper bound via the matching number.

\begin{theorem}
Let $G$ be a graph with $n$ vertices. Then $\avd(G) \leq n-\frac{2\nu(G)}{3}$.
\end{theorem}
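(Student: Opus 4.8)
The plan is to mimic the proof of Theorem~\ref{thm:boundquasi}, replacing the appeal to Berge's characterization of quasi-regularizable graphs by an argument based on a maximum matching. Recall that in that proof the crucial per-set estimate was $|a(S)| \le n - |S|$, obtained from $|a_1(S)| \le |N_1(S)|$ (Lemma~\ref{lem:a1n1}) together with $|a_2(S)| \le |N_2(S)|$, where the latter used that $a_2(S)$ is an independent set with $N(a_2(S)) \subseteq N_2(S)$ and that, in a quasi-regularizable graph, $|I| \le |N(I)|$ for every independent set $I$. For a general graph $|I| \le |N(I)|$ may fail, but only by an amount controlled by the matching number, and that slack is exactly what produces the extra $-\tfrac{2\nu(G)}{3}$ saving.

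The first step is to prove the following substitute. Fix a maximum matching $M$ of $G$, so that exactly $n - 2\nu(G)$ vertices are left uncovered by $M$. If $I$ is any independent set of $G$, then each $M$-covered vertex $v \in I$ is matched to its neighbour $M(v)$, which lies in $N(I)$ and cannot lie in $I$ (else $v,M(v)$ would be adjacent vertices of $I$); since $M$ is a matching the map $v \mapsto M(v)$ is injective, so at most $|N(I)|$ vertices of $I$ are $M$-covered, while at most $n - 2\nu(G)$ are not. Hence $|I| \le |N(I)| + (n - 2\nu(G))$. Applying this with $I = a_2(S)$, and using $a_2(S)$ independent with $N(a_2(S)) \subseteq N_2(S)$ (noted in the setup preceding Lemma~\ref{lem:a1n1} and in the proof of Theorem~\ref{thm:boundquasi}), gives $|a_2(S)| \le |N_2(S)| + (n - 2\nu(G))$ for every $S \in \mathcal{D}(G)$.

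Combining this with Lemma~\ref{lem:a1n1} and $|N_1(S)| + |N_2(S)| = |V - S| = n - |S|$ yields the per-set bound $|a(S)| \le (n - |S|) + (n - 2\nu(G))$. Summing over all $S \in \mathcal{D}(G)$ and invoking Lemma~\ref{lem:sumaS},
$$2D'(G,1) - nD(G,1) = \sum_{S \in \mathcal{D}(G)} |a(S)| \le \left(nD(G,1) - D'(G,1)\right) + (n - 2\nu(G))\, D(G,1),$$
and rearranging gives $3 D'(G,1) \le \left(3n - 2\nu(G)\right) D(G,1)$, that is, $\avd(G) \le n - \frac{2\nu(G)}{3}$.

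The only real content is the per-set estimate for $|a_2(S)|$; the matching-injection step is short and the rest is the same bookkeeping as in Theorem~\ref{thm:boundquasi}, so I expect no serious obstacle. One point worth checking is that no hypothesis such as $\delta(G) \ge 1$ is needed: an isolated vertex belongs to every dominating set, belongs to $a_2(S)$, and is uncovered by $M$, so it is correctly absorbed into the additive $n - 2\nu(G)$ term — consistent with the fact (from the discussion after Conjecture~\ref{conj:2n/3}) that graphs such as $kK_2 \cup \ell K_{1,2}$, and likewise $kK_2 \cup \overline{K_\ell}$, meet the bound with equality.
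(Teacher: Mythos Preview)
Your proof is correct and follows essentially the same approach as the paper: both fix a maximum matching, observe that each vertex of $a_2(S)$ is either unmatched or matched into $N_2(S)$, deduce $|a_2(S)| \le |N_2(S)| + (n - 2\nu(G))$, combine with Lemma~\ref{lem:a1n1}, and finish via Lemma~\ref{lem:sumaS}. One small correction to your closing parenthetical: the graphs $kK_2 \cup \ell K_{1,2}$ do \emph{not} meet the bound with equality when $\ell > 0$ (one computes $\avd = \tfrac{4k+6\ell}{3}$ while the bound is $\tfrac{4k+7\ell}{3}$); your other example $kK_2 \cup \overline{K_\ell}$ does, however.
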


\begin{proof}
We begin by showing $|a(S)| \leq 2(n-\nu(G))-|S|$ for every $S \in \mathcal{D}(G)$. By Lemma \ref{lem:a1n1}, $|a_1(S)| \leq |N_1(S)|$. Therefore it suffices to show $|a_2(S)| \leq |N_2(S)|+n-2\nu(G)$. For every $v \in a_2(S)$, $N(v) \subseteq V-S$ otherwise $\Priv{S}{v} \neq \{v\}$. Furthermore $N(v) \subseteq N_2(S)$ otherwise $v \in a_1(S)$. Fix a maximum matching in $G$. Each vertex in $a_2(S)$ is either unmatched or matched with a vertex in $N_2(S)$. Note there are at most $n-2\nu(G)$ unmatched vertices in $G$. Therefore $|a_2(S)| \leq |N_2(S)|+n-2\nu(G)$.

Finally as $|a(S)| \leq 2(n-\nu(G))-|S|$ then $\sum\limits_{S \in \mathcal{D}(G)}|a(S)| \leq 2(n-\nu(G))D(G,1)-D'(G,1)$. Thus together with Lemma \ref{lem:sumaS} we obtain

$$2D'(G,1)-nD(G,1) \leq 2(n-\nu(G))D(G,1)-D'(G,1) \Rightarrow \avd(G) = \frac{D'(G,1)}{D(G,1)} \leq n-\frac{2\nu(G)}{3}.$$
\end{proof}

Another avenue of research is investigating the monotonicity of $\avd(G)$ with respect to vertex or edge deletion. For example the removal of any edge or vertex in a graph decreases the number of dominating sets. However this is not necessarily the case for $\avd(G)$. Let $G$ be the graph pictured in Figure \ref{fig:Graphev}.

\begin{figure}[h]
\def\c{0.7}
\centering
\scalebox{\c}{
\begin{tikzpicture}
\begin{scope}[every node/.style={circle,thick,draw,fill}]
    \node[label=above:$v_1$](1) at (1,1) {}; 
    \node[label=below:$v_2$](2) at (1,-1) {}; 
    \node[label=right:$v_3$](3) at (1,0) {}; 
    \node[label=below:$v_4$](4) at (0,0) {}; 
    \node[label=above:$v_5$](5) at (-1,1) {}; 
    \node[label=below:$v_6$](6) at (-1,-1) {}; 
\end{scope}

\begin{scope}
    \path [-] (1) edge node {} (4);
	\path [-] (2) edge node {} (4);
	\path [-] (3) edge node {} (4);
	\path [-] (5) edge node {} (4);
	\path [-] (6) edge node {} (4);
	\path [-] (5) edge node {} (6);
\end{scope}
\end{tikzpicture}}
\caption{A vertex labelled graph}%
\label{fig:Graphev}%
\end{figure}
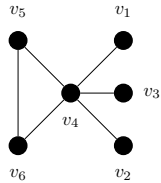

It is not difficult to determine $D(G,x) = x^6+6x^5+12x^4+10x^3+5x^2+x$ and therefore $\avd(G) = \tiltfrac{25}{7}$. However

\begin{itemize}
\item $\avd(G-v_1) = \frac{58}{19}<\avd(G) < \frac{13}{3} = \avd(G-v_1)$
\item $\avd(G-v_5v_6) = \frac{39}{11}<\avd(G) < \frac{78}{19} = \avd(G-v_1v_4)$
\end{itemize}

\noindent Despite this the following conjecture holds for all graphs on up to 7 vertices

\begin{conjecture}
For a nonempty graph $G$ there exists a vertex $v$ and edge $e$ such that

$$\avd(G-v)<\avd(G)<\avd(G-e).$$
\end{conjecture}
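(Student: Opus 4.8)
The statement splits into two independent existence claims, and I would treat them separately: (i) some edge $e$ satisfies $\avd(G-e) > \avd(G)$, and (ii) some vertex $v$ satisfies $\avd(G-v) < \avd(G)$. In both cases the strategy is to write $\avd$ of the larger graph as a convex combination of the average sizes of two sub-collections of dominating sets, and then locate a sub-collection whose average lies on the favourable side of $\avd(G)$.

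For (i), deleting an edge only destroys dominating sets, so $\mathcal{D}(G-e)\subseteq\mathcal{D}(G)$; writing $\mathcal{K}_e=\mathcal{D}(G)\setminus\mathcal{D}(G-e)$ we get $\mathcal{D}(G)=\mathcal{D}(G-e)\sqcup\mathcal{K}_e$, so $\avd(G)$ lies between $\avd(G-e)$ and $\av(\mathcal{K}_e)$. A short computation (in the spirit of Lemma~\ref{lem:AvgLower}) then shows that $\avd(G-e)>\avd(G)$ holds exactly when $\mathcal{K}_e\ne\emptyset$ and $\av(\mathcal{K}_e)<\avd(G)$: removing $e$ helps precisely when the dominating sets it kills are, on average, smaller than typical. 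I would first check that $\sum_{S\in\mathcal{D}(G)}|N_1(S)|>0$ for every nonempty $G$ (a short case analysis: take $S=V\setminus\{y\}$ for $y$ a leaf when $\delta(G)=1$, and for $\delta(G)\ge 2$ delete a minimum-degree vertex together with all but one of its neighbours so that the set stays dominating); by Lemma~\ref{lem:edgeidentity} this guarantees $\mathcal{K}_e\ne\emptyset$ for some $e$. To find a \emph{good} edge I would average over all edges with the same identity: the number of $e$ with $S\in\mathcal{K}_e$ equals $|N_1(S)|$, so $\sum_e|\mathcal{K}_e|=\sum_S|N_1(S)|$ and $\sum_e\av(\mathcal{K}_e)|\mathcal{K}_e|=\sum_S|S|\,|N_1(S)|$, and some edge works as soon as $\sum_S|S|\,|N_1(S)|<\avd(G)\sum_S|N_1(S)|$ --- that is, as soon as $|S|$ and $|N_1(S)|$ are negatively correlated over the uniform measure on $\mathcal{D}(G)$, which is plausible since $N_1(S)\subseteq V-S$ forces $|N_1(S)|\le n-|S|$. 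Turning that covariance heuristic into a proof (it need not be \emph{strictly} negative for an adversarial $G$) is the delicate point; a fallback is to analyse a single edge at a minimum-degree vertex and bound $\av(\mathcal{K}_e)$ by hand.

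For (ii), first reduce to the connected isolate-free case: an isolated vertex $v$ gives $\avd(G-v)=\avd(G)-1$ by Lemma~\ref{lem:disjointunion}, and for disconnected $G$ it suffices, again by additivity, to solve the problem inside one nontrivial component. For connected $G$ with $\delta\ge1$ I would use the decomposition
\[ \mathcal{D}(G-v)=\mathcal{D}_{-v}(G)\ \sqcup\ p_v(G), \]
which holds because a subset of $V-v$ dominates $G-v$ iff it already dominates all of $G$ (the part $\mathcal{D}_{-v}(G)$) or it avoids $N[v]$ entirely and dominates $G-v$ (the part $p_v(G)$). Combined with $\mathcal{D}(G)=\mathcal{D}_{+v}(G)\sqcup\mathcal{D}_{-v}(G)$, the inequality $\avd(G-v)<\avd(G)$ becomes a comparison of two convex combinations sharing the summand $\av(\mathcal{D}_{-v}(G))$, and it suffices to find $v$ with both $\av(\mathcal{D}_{-v}(G))<\avd(G)$ and $\av(p_v(G))<\avd(G)$. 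The first condition, via the bijection $S\mapsto S\cup\{v\}$ from the proof of Lemma~\ref{lem:sumaS} identifying $\mathcal{D}_{-v}(G)$ with $\mathcal{D}_{+v}(G)\setminus a_v(G)$, is equivalent to $|a_v(G)|\bigl(\av(\mathcal{D}_{-v}(G))-\av(a_v(G))\bigr)<|\mathcal{D}_{-v}(G)|$, a mild requirement that the critical sets of $v$ are neither too small nor too scarce; the second is forced by the crude bound $\av(p_v(G))\le n-1-\deg(v)$ when $\deg(v)$ is large, and otherwise needs a finer estimate of how many sets in $p_v(G)$ are large.

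The main obstacle is claim (ii), because there is no universal choice of $v$: removing a maximum-degree vertex fails already for the star $K_{1,n-1}$ (there one must delete a leaf, since deleting the centre leaves $n-1$ isolated vertices and \emph{raises} $\avd$ for $n\ge 3$), yet the only cheap bound on $\av(p_v(G))$ improves with $\deg(v)$. So the proof will have to choose $v$ adaptively --- a leaf or a minimum-degree vertex when the structure of $G$ allows, a high-degree vertex otherwise --- and verify in each regime that the dominating sets avoiding $v$ (together with those in $p_v(G)$) really are smaller on average than a typical dominating set of $G$, against the upward pull of $|V(G-v)|=n-1<n$. Establishing this dichotomy in full generality is exactly the content the conjecture leaves open.
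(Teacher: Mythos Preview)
The paper does not prove this statement: it is stated as a \emph{conjecture}, with the only supporting evidence being a computer check for all graphs on at most seven vertices. There is therefore no proof in the paper to compare your proposal against.

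Your proposal is also not a proof, and to your credit you say so yourself in the final paragraph. What you have written is a reasonable catalogue of reformulations and heuristics --- the convex-combination viewpoint via Lemma~\ref{lem:AvgLower}, the decomposition $\mathcal{D}(G-v)=\mathcal{D}_{-v}(G)\sqcup p_v(G)$, the averaging identity of Lemma~\ref{lem:edgeidentity} for the edge case --- but at each of the two decisive points you stop short. For (i), the claim that some edge $e$ has $\av(\mathcal{K}_e)<\avd(G)$ is reduced to a ``negative correlation'' of $|S|$ and $|N_1(S)|$ over dominating sets, which you correctly flag as only plausible, not established; the fallback of analysing a single edge at a minimum-degree vertex is left entirely unexecuted. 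For (ii), you identify the genuine difficulty (no uniform choice of $v$ works, as the star shows) and then explicitly declare that resolving the dichotomy ``is exactly the content the conjecture leaves open.'' That is an honest assessment, but it means the proposal contains no argument that would settle either half of the conjecture.

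In short: the statement is open in the paper, and your write-up is an outline of possible attack lines rather than a proof. If you wish to make progress, the edge side (i) looks more tractable, since at least the sign of the desired inequality is consistent with the crude bound $|N_1(S)|\le n-|S|$; the vertex side (ii) appears to require a structural case analysis that neither you nor the paper supplies.
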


\bibliographystyle{plain}
\bibliography{MeanDomOrder}

\end{document}